\numberwithin{equation}{section} 
\def\expandafter\normalsize\expandafter{%
    \normalsize
    \setlength\abovedisplayskip{10pt}
    \setlength\belowdisplayskip{10pt}
    \setlength\abovedisplayshortskip{10pt}
    \setlength\belowdisplayshortskip{10pt}
}
\patchcmd{\epigraph}{\@epitext{#1}}{\itshape\@epitext{#1}}{}{}
\newtheorem{theorem}{Theorem}[section]
\newtheorem{lemma}[theorem]{Lemma}
\newtheorem{proposition}[theorem]{Proposition}
\newtheorem{corollary}[theorem]{Corollary}
\newcommand{\Mod}[1]{\ (\mathrm{mod}\ #1)} 
\newcommand{\image}{{\rm{im} }\,}
\newcommand{\kernel}{{\rm{ker} }\,}
\newcommand{\rk}{{\rm{rk} }\,}
\newcommand{\real}{{\Bbb R}}
\newcommand{\comp}{{\Bbb C}}
\newcommand{\irrep}[1]{{\mathcal V}_{#1}}
\newcommand{\calv}{{\mathcal V}}
\newcommand{\cbra}[3]{\left\langle #1,#2 \right\rangle_{#3}}
\newcommand{\sqb}[2]{\left[ #1,#2 \right]}
\newcommand{\sltc}{{SL_2(\comp)}}
\newcommand{\liesltc}{{{\frak sl}_2(\comp)}}
\newcommand{\gz}{{\frak g_0}}
\newcommand{\gp}{{\frak g_+}}
\newcommand{\gm}{{\frak g_-}}
\newcommand{\gn}[1]{{\frak g_{#1}}}
\newcommand{\hlie}{{\frak h}}
\newcommand{\glie}{{\frak g}}
\newcommand{\alie}{{\frak a}}
\newcommand{\klie}{{\frak k}}
\newcommand{\bab}{{\langle a, b \rangle}}
\newcommand{\buat}{{\langle u, a^3 \rangle}}
\newcommand{\buadb}{{\langle u, a^2 \,b \rangle}}
\newcommand{\buabd}{{\langle u, a \, b^2 \rangle}}
\newcommand{\bubt}{{\langle u, b^3 \rangle}}
\newcommand{\jimport}[1]{ graphics}  
 \numberwithin{repeat}{repsection}
\begin{document}

\title[Okamoto Transformations and Instantons]{ 
Okamoto Transformations relating Equivariant Instanton Bundles via Painlev\'e VI}

\author{Jan Segert}
\address{Mathematics Department\\
         University of Missouri\\
         202 Math Sci Bldg, MO 65211}
\email[J.~Segert]{segertj@umsystem.edu}

\date{March 10, 2021}

\maketitle

\begin{abstract}
We compute explicit solutions $\Lambda^\pm_m$ of the Painlev\'e VI (PVI) differential equation from  
 equivariant instanton bundles $E_m$ corresponding to Yang-Mills instantons with ``quadrupole symmetry."  
 This is based on a  generalization of Hitchin's logarithmic connection to vector bundles with an $\sltc$ action.   
We then identify  explicit Okamoto transformation which play the role of ``creation operators"  for 
construction $\Lambda^\pm_m$ from the ``ground state" $\Lambda^\pm_0$, suggesting that the equivariant 
instanton bundles $E_m$ might similarly be related to the trivial ``ground state" $E_0$.  
\end{abstract}

\tableofcontents


\section{Introduction}
\subsection{The Hierarchy of Equivariant Instanton Bundles}

The work of Atiyah-Drinfeld-Hitchin-Manin (ADHM) on   instantons \cite{ADHM, At1} is an important 
milestone for the highly productive interplay between mathematics and physics, and continues to  
inspire interesting work to this day.   
ADHM studied the correspondence between the  {\it Yang-Mills instantons} of theoretical physics 
and  {\it  instanton bundles}.  Yang-Mills instantons are
 self-dual  connection on a vector bundle $F$ over the four-sphere $S^4$.   
 The  {\it Penrose twistor space} of $S^4$ is the complex projective space $\comp P^3$. 
 Instanton bundles are a class of holomorphic vector bundles $E$ over $P^3$   
 constructed from the linear algebra data of a   {\it self-dual linear monad}, as discussed in Section \ref{sec:adhm} and Appendix \ref{sec:monads}.

 The starting point for the present paper is a result of Gil Bor and the author \cite{BS} on {\it equivariant} Yang-Mills instantons. 
 Specifically, we studied self-dual connections with a certain {\it quadrupole symmetry}.
 This work resolved  open questions from  \cite{SS,Bor2},   where  quadrupole symmetry 
 played a key role in establishing the existence of {\it non-minimal} (neither self-dual nor anti-self-dual) Yang-Mills connections  with nonzero 
 instanton number $c_2(E) \ne 0$.   The existence of 
 non-minimal Yang-Mills connections had been established for the $c_2(E) = 0$ case in the seminal work \cite{SSU}.  
Quadrupole symmetry of instantons on $S^4$ corresponds to the  action of $SU(2)$ on the the complex twistor space
 space $P^3 = P(\comp^4)$ via the unique  irreducible  representation on $\comp^4$.   The Fano three-fold 
 $P^3$ is then almost-homogenous under the complexification of the $SU(2)$ action to an $\sltc$ symmetry, meaning that the $\sltc$ action has  an open dense orbit.  The key existence \cite{BS} and uniqueness \cite{SegU} results from Section \ref{sec:adhm} are summarized in:

\begin{theorem} 
\label{thm:BS}   
For each nonnegative integer $m$ there exists an $\sltc$ equivariant  instanton bundle $E_m$ with rank $\rk E_m = 2$ and instanton number \nobreak{
$c_2(E_m) = \tfrac{1}{2} m (m+1)$}.      Every $\sltc$ equivariant instanton bundle $E$ of rank $\rk E = 2$ is isomorphic 
to one of the $E_m$.  
\end{theorem}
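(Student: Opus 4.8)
The plan is to translate the statement into the classification of $\sltc$-equivariant ADHM data. By the monad description recalled in Section~\ref{sec:adhm} and Appendix~\ref{sec:monads}, a rank-$2$ instanton bundle $E$ with $c_{2}(E)=k$ is the cohomology $E=\kernel a^{*}/\image a$ of a self-dual linear monad
\[
H\otimes\clo(-1)\ \xrightarrow{\ a\ }\ K\otimes\clo\ \xrightarrow{\ a^{*}\ }\ H^{*}\otimes\clo(1),\qquad \dim H=k,\ \ \dim K=2k+2,
\]
and, conversely, $E$ recovers this (minimal) monad up to isomorphism. An $\sltc$-equivariant structure on $E$ relative to the $\irrep{3}$-action on $P^{3}=P(\comp^{4})$, with $\comp^{4}$ the $4$-dimensional irreducible module, lifts to $\sltc$-module structures on $H$ and $K$ for which $a$ is invariant; since $H^{0}\big(\clo_{P^{3}}(1)\big)\cong\irrep{3}$ as $\liesltc$-modules, invariance puts $a$ in the finite-dimensional space $\operatorname{Hom}_{\liesltc}\!\big(H\otimes\irrep{3},\,K\big)$, with $a^{*}$ determined by $a$ through the self-duality pairing. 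Both claims thus become statements about this equivariant data.

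First I would pin down the $\liesltc$-module types of $H$ and $K$. The identity $\dim H=k=\tfrac12 m(m+1)=1+2+\cdots+m$ singles out the ``staircase'' $H\cong\irrep{0}\oplus\irrep{1}\oplus\cdots\oplus\irrep{m-1}$ (with $\dim\irrep{j}=j+1$), and requiring that $a$ be fibrewise injective over all of $P^{3}$, so that $\image a$ is a subbundle, together with self-duality of $K$, forces via Clebsch--Gordan a unique decomposition of $K$. Once the monad hypotheses hold, the monad Chern-character formula gives $\rk E=\dim K-2\dim H=2$, $c_{1}(E)=0$, and $c_{2}(E)=\dim H=\tfrac12 m(m+1)$ for free, so the numerics in the statement are automatic.

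For existence I would, for each $m$, exhibit the explicit invariant map $a_{m}\in\operatorname{Hom}_{\liesltc}(H\otimes\irrep{3},K)$ assembled from Clebsch--Gordan projections and then verify: (i) $a_{m}$ is fibrewise injective (equivalently, $a_{m}^{*}$ fibrewise surjective) at every point of $P^{3}$; (ii) hence the cohomology $E_{m}$ is locally free of rank $2$; and (iii) $E_{m}$ satisfies the instanton and reality conditions --- such as $H^{0}(E_{m})=0$ and compatibility with the quaternionic structure descending from $S^{4}$ --- making it a genuine $\sltc$-equivariant instanton bundle. Condition (i) is the crux and, I expect, the main obstacle. The group $SL_{2}(\comp)$ acts on $P^{3}=P(\operatorname{Sym}^{3}\comp^{2})$ with exactly three orbits --- the open dense orbit, the tangent developable of the twisted cubic, and the twisted cubic itself (the unique closed orbit) --- and fibrewise injectivity of $a_{m}$ need only be tested at one representative of each; it is routine on the open orbit, but on the two degenerate orbits the generic argument collapses and one must use the precise structure of $a_{m}$.

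For uniqueness, let $E$ be any rank-$2$ $\sltc$-equivariant instanton bundle. Its minimal monad is reconstructed functorially from $E$ via the Beilinson spectral sequence --- for example $H\cong H^{1}\big(E(-2)\big)$, with $K$ built from similarly canonical cohomology --- so $H$ and $K$ inherit $\sltc$-linearizations and two equivariant instanton bundles are equivariantly isomorphic iff their equivariant monads are. It then suffices to show that, for the numerics above, the equivariant data is rigid: the set of $a\in\operatorname{Hom}_{\liesltc}(H\otimes\irrep{3},K)$ solving the self-duality (ADHM) equations is a single orbit of the reductive group $\big(\operatorname{GL}(H)\times\operatorname{GL}(K)\big)^{\sltc}$ of equivariant base changes. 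I would establish this by decomposing the $\operatorname{Hom}$-space into its (few) multiplicity blocks, writing the ADHM equations and the group action block-by-block, and solving; the rigidity of $\liesltc$-modules keeps everything finite-dimensional and small. Two points still need care: that every solution $a$ automatically satisfies the non-degeneracy of (i), without which the cohomology would fail to be locally free, and that no equivariant monad of a different size can also produce a rank-$2$ bundle, the latter following from minimality of the ADHM monad. Existence and uniqueness together then give the theorem.
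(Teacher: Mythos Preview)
Your overall framework---reduce to equivariant self-dual monads, analyse the equivariant map $a$ via Clebsch--Gordan, and then check injectivity/isotropy---is exactly the paper's strategy.  But the proposal contains a concrete error and, more seriously, a missing mechanism.

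The concrete error is the module type of $H$.  The sentence ``$\dim H=\tfrac12 m(m+1)=1+2+\cdots+m$ singles out the staircase $H\cong\irrep{0}\oplus\irrep{1}\oplus\cdots\oplus\irrep{m-1}$'' is not justified: a dimension count alone cannot determine an $\liesltc$-module, and in fact the correct answer is different.  In the paper's conventions (Proposition~\ref{thm:ssimp}) one has
\[
W(m)=\bigoplus_{\substack{0\le l\le m-1\\ l\equiv m-1\ (\mathrm{mod}\ 2)}}\irrep{2l},
\qquad
\hat V(m)=\bigoplus_{0\le j\le m}\irrep{2j+1},
\]
so for instance $W(3)=\irrep{0}\oplus\irrep{4}$, not $\irrep{0}\oplus\irrep{1}\oplus\irrep{2}$.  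All summands of $W$ have even label and all summands of $V$ have odd label; this parity is forced because $Z=\irrep{3}$ and $a$ must land in $V$, and it is incompatible with your staircase.  With your $H$, the space $\operatorname{Hom}_{\liesltc}(H\otimes\irrep{3},K)$ and the subsequent block analysis would simply be the wrong objects.

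The deeper gap is that you have no mechanism to \emph{determine} the module types of $H$ and $K$; you assert them.  In the paper this is the heart of the uniqueness argument: one identifies $W'\cong H^{1}(E(-1))$ and $V\cong H^{1}(E\otimes\Omega^{1})$ functorially (Drinfeld--Manin), and then computes the $\sltc$-characters of these cohomology groups using the Atiyah--Bott--Lefschetz fixed-point theorem at the fixed points of a maximal torus acting on $P(\irrep{3})$.  This character computation, not a dimension identity, is what pins down $W(m)$ and $V(m)$.  Without it your uniqueness step (``the set of $a$ \ldots\ is a single orbit'') is operating on the wrong Hom-space and cannot rule out equivariant monads built on other module types of the same dimension.

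Two smaller points.  First, you invoke ``reality conditions'' and the quaternionic structure from $S^{4}$, but the statement concerns $\sltc$-equivariant bundles, not $SU(2)$-equivariant real ones; the paper stresses that uniqueness in the complex setting needs the separate argument of \cite{SegU} and is \emph{not} a corollary of the real \cite{BS} result.  Second, once the correct $W(m),V(m)$ are in hand, the paper does not test injectivity orbit-by-orbit as you propose; it translates both injectivity and isotropy into explicit algebraic conditions on scalar coefficients $a_{l,p}$ (Theorem~\ref{thm:eqad}) and then exhibits and classifies solutions (Proposition~\ref{thm:yes}).  Your orbit-representative check could be made to work, but only after the module types are correctly identified.
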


Allowing for a modicum of poetic license, the hierarchical structure of Theorem \ref{thm:BS}  is reminiscent of  
the familiar quantum harmonic oscillator \cite{qho}, where the energy eigenstates  $\psi_m$ are 
similarly indexed by a nonnegative integer $m$.    Furthermore, any eigenstate $\psi_m$ is constructed 
from the ground state $\psi_0$ by successive applications of Dirac's famous {\it creation operator} ${\mathbf a}^\dagger$. 
A motivation for this paper was to explore the possibility of an analogous ``creation 
operator" $S$ which by successive applications would construct the equivariant instanton bundle $E_m$ from the 
trivial ``ground state" equivariant instanton bundle $E_0$.  
This putative ``creation operator" $S$ is indicated by dashed arrows in the schematic:  
\begin{align}
\label{eq:creation}
&
\begin{tikzcd}[row sep=normal, column sep=large,ampersand replacement=\&]
E_0  \arrow[r,dashed,"S"] 
\& E_1  \arrow[r,dashed,"S"] 
\&  E_2  \arrow[r,dashed,"S"] 
\& E_3  \arrow[r,dashed,"S"] 
\& E_4. 
\end{tikzcd}
\end{align}
\noindent  Alas we did not manage to construct such a   ``creation operator" $S$ for the hierarchy $E_m$ of equivariant instanton bundles 
in Eqn.\,\ref{eq:creation}.  

\vfill\eject

\subsection{The Shadow Hierarchy of PVI Solutions} 
We do however encounter  some interesting ``Platonic shadows" \cite{plato} of the putative ``creation operator" $S$, as summarized in the 
follow schematic subsuming eqn.\,\ref{eq:creation}: 
\begin{align}
\label{eq:combined} 
\begin{tikzcd}[row sep=normal, column sep=large,ampersand replacement=\&]
 \Lambda_0^+ \arrow[r,"Q"]   
 \&  
  \Lambda_1^+ \arrow[r,"Q"]  
\& 
 \Lambda_2^+ \arrow[r,"Q"]  
 \& 
  \Lambda_3^+ \arrow[r,"Q"]  
 \&  
 \Lambda_4^+  
\\
E_0 \arrow[u] \arrow[d] \arrow[r,dashed,"S"]   
\& E_1 \arrow[u] \arrow[d] \arrow[r,dashed,"S"]   
\& E_2 \arrow[u] \arrow[d] \arrow[r,dashed,"S"]   
\& E_3\arrow[u] \arrow[d] \arrow[r,dashed,"S"]   
\&E_4 \arrow[u] \arrow[d]
\\
 \Lambda_0^- \arrow[r,"Q^{-1}"]  
 \&   
 \Lambda_1^- \arrow[r,"Q^{-1}"] 
 \& 
 \Lambda_2^- \arrow[r,"Q^{-1}"] 
 \& 
 \Lambda_3^- \arrow[r,"Q^{-1}"] 
\& \Lambda_4^- 
\end{tikzcd}
\end{align}
\noindent  As further discussed in Sections \ref{sec:biglog} and \ref{sec:pi},    
each of the equivariant instanton bundles $E_m$ of 
Theorem  \ref{thm:BS} yields a pair of PVI solutions:  $\Lambda^+_m \leftarrow E_m \rightarrow \Lambda^-_m$. 
We  have explicitly computed $\Lambda^\pm_m$ for $0 \le m \le 4$ in Section \ref{sec:painleve}, following \cite{Seg1} and \cite{B}, see also \cite{Man1}.   
Other geometric constructions of algebraic Painlev\'e VI solutions, and  associated  Frobenius manifolds, include  \cite{H1,D,DM,H2,H3,H4,Seg2}.
This computation generalizes  the work Hitchin in the influential paper \cite{H1}, which gives  
the $m = 0$ ``ground state" case  $\Lambda^+_0 \leftarrow E_0 \rightarrow \Lambda^-_0$.  
Transformations between solutions of  Painlev\'e VI have been known for many years, but deeper insights into their structure 
continue to emerge \cite{DR, Bo3,Bo1,Fil}.    In honor of Okamoto's decisive contribution to this area \cite{Oka}, we will 
use the term  {\it Okamoto transformation} for all  transformations between  Painlev\'e VI solutions.   Section \ref{sec:okamoto} contains additional discussion and references.  
As further  discussed in Section \ref{sec:okamoto}, 
each solid horizontal arrow $Q: \Lambda^+_m \to \Lambda^+_{m+1}$ and 
$Q^{-1}: \Lambda^-_{m} \to \Lambda^-_{m+1}$ 
indicates  an explicit Okamoto transformation. 

Then the operators $Q$ and $Q^{-1}$ each act as a ``creation operator" on its respective hierarchy of PVI solutions,  
conjecturally for all nonnegative integers $m$ although our case-by-case proof only covers a finite subset:  
\begin{theorem} 
\label{thm:pair}  
For each nonnegative integer $m \le 4$, 
\begin{equation*}
\Lambda_m^+ = Q^{ m} \Lambda_0^+, \qquad  \Lambda_m^- = Q^{- m} \Lambda_0^-.
\end{equation*}
\end{theorem}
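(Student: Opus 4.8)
The plan is to proceed by direct verification, case by case in $m$, since Theorem~\ref{thm:BS} furnishes only finitely many bundles in the stated range and Sections~\ref{sec:painleve} and~\ref{sec:okamoto} supply all the explicit data needed. The case $m=0$ is the tautology $\Lambda_0^\pm = Q^0\Lambda_0^\pm$, so the real content is the four ``creation steps''
\begin{equation*}
Q\Lambda_0^+ = \Lambda_1^+,\quad Q\Lambda_1^+=\Lambda_2^+,\quad Q\Lambda_2^+=\Lambda_3^+,\quad Q\Lambda_3^+=\Lambda_4^+
\end{equation*}
together with their mirror images on the $\Lambda^-$ side, with $Q^{-1}$ in place of $Q$. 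Once each single step has been checked, the factorizations $\Lambda_m^\pm = Q^{\pm m}\Lambda_0^\pm$ follow by composing.

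For a single step I would first match the local-monodromy (Riemann-scheme) data. An Okamoto transformation of PVI acts on the four local exponents, equivalently on a point of the affine root lattice, and $Q$ is the specific such transformation identified in Section~\ref{sec:okamoto}; so the first task is to confirm that $Q$ sends the parameter tuple attached to $\Lambda_m^+$ — which depends on $m$ through $c_2(E_m)=\tfrac12 m(m+1)$ — precisely to the parameter tuple attached to $\Lambda_{m+1}^+$. This is a short linear check, and it also guarantees that the iterate $Q^m$ is legitimately applicable at every intermediate stage. With the parameters matched, one verifies equality of the solution functions themselves. Because the $\Lambda_m^\pm$ are algebraic PVI solutions, the cleanest route is to write each one in the rational parametrization $(t,y)=(t_m(s),y_m(s))$ used in Section~\ref{sec:painleve}, apply the explicit birational formula for $Q$ to $(t_m(s),y_m(s))$, and check that the result coincides with $(t_{m+1}(s),y_{m+1}(s))$ after absorbing, if necessary, a M\"obius reparametrization of the curve parameter $s$; equality of two rational (or low-degree algebraic) functions is then a finite computation.

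The $\Lambda^-$ chain can in principle be handled by the same direct computation with $Q^{-1}$. A more economical alternative, which I would attempt first, is to exhibit a single symmetry of PVI interchanging the two hierarchies — for instance one induced by a duality or sign change on $E_m$, or by one of the standard PVI symmetries such as $t\mapsto 1-t$ or $t\mapsto 1/t$ — under which $\Lambda_m^+\leftrightarrow\Lambda_m^-$ and $Q\leftrightarrow Q^{-1}$. The bottom row of diagram~\ref{eq:combined} would then follow formally from the top row, and the explicit work would be cut in half.

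The main obstacle is bookkeeping rather than conceptual. The degree of the algebraic solution $\Lambda_m^\pm$ grows with $m$, so the final step $Q\Lambda_3^+=\Lambda_4^+$ (and its mirror) demands the most delicate manipulation; and one must be scrupulous about which branch of the algebraic solution is produced — Okamoto transformations are only defined modulo the affine Weyl group and can permute branches, so ``$Q\Lambda_m^+$'' must be pinned down to the specific branch that equals $\Lambda_{m+1}^+$, not merely to a Galois conjugate of it. Keeping the parametrization explicit throughout is what makes the verification tractable, and it is precisely the absence of a uniform parametrization valid for all $m$ that forces the present case-by-case argument rather than a single proof of the full conjecture.
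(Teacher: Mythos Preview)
Your plan is correct and would work, but the paper organizes the same case-by-case verification more economically, and in a way that realizes precisely the ``symmetry interchanging the two hierarchies'' you speculate about. The paper writes $Q = B\,R_5$ as a product of two involutions and proves two separate facts: Proposition~\ref{thm:r5}, that $R_5\Lambda_m^\pm = \Lambda_m^\mp$ for each $m\le 4$; and Proposition~\ref{thm:aprop}, that $B\Lambda_m^- = \Lambda_{m+1}^+$ for each $m\le 3$. The creation step $Q\Lambda_m^+ = \Lambda_{m+1}^+$ is then the composite $B(R_5\Lambda_m^+) = B\Lambda_m^- = \Lambda_{m+1}^+$, and the mirror step $Q^{-1}\Lambda_m^- = \Lambda_{m+1}^-$ follows by applying $R_5$ to both sides. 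So the symmetry you were looking for is not one of the elementary $t\mapsto 1-t$ type but the fundamental Okamoto transformation $R_5$ itself, and it is already baked into the definition of $Q$.

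Two small corrections to your sketch. First, the parameter vector attached to $\Lambda_m^\pm$ is $\theta = \pm(2m+1)\mu$, which depends linearly on $m$; it is not read off from $c_2(E_m)=\tfrac12 m(m+1)$. Second, in this paper there is a \emph{single} rational parametrization $t=t(w)$ of eqn.~\ref{eq:1.2} shared by all $m$, with only $\lambda_m^\pm(w)$ varying; so no M\"obius reparametrization of the curve parameter is ever needed, and the verifications in Propositions~\ref{thm:r5} and~\ref{thm:aprop} reduce to checking equality of explicit rational functions of $w$. Your direct application of $Q$ would of course also succeed, but would require manipulating $B\,R_5$ as a single (more complicated) birational formula rather than two simpler ones.
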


\noindent We interpret the 
 ``creation operators" $Q$ and $Q^{-1}$ as ``shadows" of a putative creation operator $S$ for equivariant 
 instanton bundles.   Reconstructing the equivariant instanton $E_m$ from the corresponding pair of Painlev\'e solutions 
$\Lambda^+_m$ and $\Lambda^-_m$ is one possible approach to constructing $S$.  
It is theoretically possible to reconstruct an instanton bundle on $P^3$ from its divisor of jumping lines 
and associated data \cite{OS, Hu,GH}.   
 In our equivariant setting, jumping lines are manifested as poles of the PVI solutions.  
 However,   the  PVI solutions $\Lambda^\pm_m$ have additional poles not corresponding 
 to jumping lines, which have so far thwarted attempts to reconstruct the divisor of jumping lines.

 \subsection{Notes}

One of the motivations for the present paper is a body of  interesting work on the aetiology of the Okamoto transformations from the viewpoint of isomondromic deformations focusing on Katz's {\it middle convolution} and 
related ideas \cite{DR, Bo1,Bo3,Fil,Simpson}, complementing  results on the classical Schlesinger transformations e.g \cite{MS}.  
The hope is that a variant of middle convolution might produce the putative ``creation operator" $S: E_m \to E_{m+1}$ on 
the hierarchy of equivariant instanton bundles.  

Another motivation for the present paper is a wave of recent interest in   
instanton bundles and monads on  Fano threefolds other than the classic case of $P^3$, see 
e.g. \cite{Costa,Kuz,Faenzi,Sanna,Sanna2,Ma1,Ma2,Ma3,Ma4,Ma5,Qin}.  
Like $P^3$, several of these Fano threefolds  are almost-homogeneous for a natural $\sltc$ action
(see  \cite{LV,MJ,Tim,Bri} for  almost-homogenous threefolds).  In one particularly 
promising instance, 
\cite{Sanna} established existence and uniqueness of an equivariant instanton bundle with minimal nonzero instanton numbers. 
It is intriguing to speculate whether the equivariant instanton bundles might also constitute a hierarchy in this case, by analogy to   Theorem \ref{thm:BS} in the case of $P^3$.   Fano threefolds have a plethora of rational curves since they are rationally connected 
\cite{camp,kol,harris}.  This could open the door for some of the ideas of Okamoto transformations, and possibly middle convolution, 
that play an important  role for the $P^3$ results of the present paper.

\section{Representation Theory Prerequisites}
\label{sec:reps}

In this section we start with a unified review of the finite-dimensional 
representation theory of $\sltc$, with  
 emphasis is on the 
decomposition of tensor product representations into 
irreducibles.  Although these results are rather standard,  
the formalism used in the present paper, and in \cite{BS},  is rather nonstandard. 

Let  
$\calv := \comp[x,y]$ be the infinite-dimensional 
vector space of polynomials 
with generated by the two 
indeterminates $x$ and $y$.  
$\sltc$  acts on $\calv$ in the usual way by 
linear substitutions in $x$ and $y$.  The subspace 
$\irrep d$ of degree-$d$ homogeneous polynomials is a 
vector space of dimension $d+1$.  It is well known 
that $\sltc$ acts irreducibly on each $\irrep d$, and that irreducible finite-dimensional representation of $\sltc$  
is isomorphic to some $\irrep d$.

Tensor products of the irreducible representations $\irrep d$ decompose
according to the {\it Clebsch-Gordan formula}
\begin{align}
\irrep i \otimes \irrep j 
= \irrep {i+j}\oplus \irrep {i+j-2}\oplus 
\dots \oplus \irrep {|i-j|}. \label{eq:4.1}
\end{align}
The  
equivariant linear projections implicit in 
the Clebsch-Gordan formula are simply described in terms of 
the ``transvectants'' (\"Uberschiebungen) 
of classical invariant theory.  
For a non-negative integer $p$, the 
{\it $p$-th transvectant} is the  $\sltc$-equivariant    
bilinear map  
$\cbra \cdot \cdot p :  \calv \times \calv   \to \calv $  
defined  by  
\begin{align}
\cbra u v p 
&:= {1 \over p!} \sum_{k=0}^p (-1)^k {p \choose k}
{\partial^p u \over \partial x^{p-k} \partial y^k}
{\partial^p v \over \partial 
x^k \partial y^{p-k}}. \label{eq:4.2}
\end{align}
Verifying the equivariance of the transvectants is not difficult, 
see e.g. \cite{BS}.  
The $0$-th transvectant is ordinary multiplication of polynomials, 
$\cbra u v 0 = u\, v$. 
To relate transvectants to  the Clebsch-Gordan formula, one checks 
that the 
that a restriction of the 
$p$-th transvectant is a bilinear map  
$\irrep i \times \irrep j \to \irrep {i + j - 2p}$ 
which is  nonzero if and only if    $0 \le p \le \min(i,j)$.  

From the  obvious symmetry property 
\begin{align} \cbra u v p = (-1)^p \cbra v u p , \label{eq:4.3} 
\end{align}
we see that the nondegenerate bilinear form $\cbra \cdot \cdot { }: = \cbra \cdot \cdot p:\irrep p \times \irrep p \to \irrep 0$ is 
symmetric if $p$ is even, and antisymmetric (symplectic) if $p$ is odd.  
We denote also by  $\cbra \cdot \cdot { }$ the bilinear extension to any 
direct sum of irreducible representation $\irrep d$.

A basis for  $\irrep p$ may be constructed 
from a pair of linearly independent vectors $a, b \in \irrep 1$.   
Linear independence of $a$ and $b$ is equivalent to $\cbra a b {}  \ne 0$. 
One may  check that 
$\{ a^p, a^{p-1}\, b, \dots,  a \, b^{p-1}, b^p \}$ is a basis 
of $\irrep p$, since 
\begin{align}
\cbra {a^{p-j} \, b^{j}} {a^{j} \, b^{p-j}  } {} =  
(-1)^{j} \, j! \, (p-j)!\,  \cbra a b {}^{p} \ne 0, \label{eq:4.4}
\end{align}
and $\cbra {a^{p-j}\, b^j} {a^k \, b^{p-k}} {} = 0$ if $k \ne j$.   
This simultaneously proves the nondegeneracy of the bilinear form 
$\cbra \cdot \cdot {}$ on $\calv$.

The adjoint representation of $\sltc$ on its Lie algebra 
$\liesltc$ is an irreducible 
representation of dimension three, so it is
isomorphic to the representation $\irrep 2$.    
The Lie bracket is an equivariant antisymmetric bilinear map 
$\sqb \cdot \cdot : \liesltc \times \liesltc \to \liesltc$.  
The first transvectant is an equivariant antisymmetric bilinear map
$\cbra \cdot \cdot 1 : \irrep 2 \times \irrep 2 \to \irrep 2$, 
and modulo scalar multiplication, is the only such map. 
We can then fix an identification of the 
Lie algebras  $\liesltc$ and $\irrep 2$ by defining the 
Lie bracket on $\irrep 2$ to be the first transvectant, i.e.  
$$ \sqb u v := \cbra u v 1, \qquad u,v \in \irrep 2 .  \nonumber $$
A convenient basis of the Lie algebra $\irrep 2$ is 
\begin{align}
 \gz(a,b) := - {a \, b \over \cbra a b {} }, \qquad
 \gp(a,b) := {a^2 \over 2 \, \cbra a b {} }, \qquad
 \gm(a,b) := - {b^2 \over 2 \, \cbra a b {}}, \label{eq:4.5} 
 \end{align}
which will be abbreviated as $\{ \gz, \gp, \gm \}$ when the choice 
of $a$ and $b$ is clear.  The Lie brackets of these basis vectors are: 
\begin{align*}
 \sqb {\gz } {\gp } = 2 \gp , \qquad
\sqb {\gz } {\gm } = - 2 \gm , \qquad
\sqb {\gp } {\gm } = \gz  .  
\end{align*}

More generally, 
the linearization of the $\sltc$ action on $\irrep p$ is an 
equivariant bilinear map  
$\sqb \cdot \cdot: \liesltc \times \irrep p \to \irrep p$.  
Identifying $\liesltc$ with $\irrep 2$ as above, one checks that 
this map again coincides with the first transvectant, 
\begin{align*}
 \sqb \cdot \cdot : \irrep 2 \times \irrep p &\to  \irrep p 
\\
(u,v) &\mapsto \cbra u v 1 .  
\end{align*}
It is straightforward to compute the action of the Lie algebra 
on the monomials in $\calv$,  
\begin{align}
\sqb {\gz(a,b) } {a^i\,  b^j} &= (i-j) \, a^i \, b^j \nonumber \\
\sqb {\gp(a,b) } {a^i\,  b^j} &= j \, a^{i+1} \, b^{j-1} \nonumber \\
\sqb {\gm(a,b) } {a^i\, b^j} &=  i \, a^{i-1} \, b^{j +1} .   \label{eq:4.8}
\end{align}

\section{The Equivariant Instanton Bundles $E_m$}

\subsection{Existence and Uniqueness}
\label{sec:adhm}

In this section we revisit the results of \cite{BS} on equivariant instanton bundles on the complex projective space $P^3$.   
The standard references for instanton bundles and self-dual ADHM 
include \cite{At1,ADHM,OS,OSS}.   Appendix \ref{sec:monads} contains more details for the equivariant case of interest. 
We write $P^3 = P(Z)$ as the projectivization of the complex vector space $Z := \comp^4$.   
Let $W$ be a complex vector space, let  $W'$ be its dual, and let $V$ be a complex vector space endowed with a complex symplectic form 
$\langle \cdot, \cdot \rangle$.  Then a {\it  (self-dual  linear) monad} over  $P^3$ is a   
a complex of vector bundles  
\begin{equation}
\begin{tikzcd}[row sep=normal, column sep=normal]
0 \arrow[r] & W \otimes {\mathcal O}_{P^3}(-1)  \arrow[r,"A"] &  V \otimes  {\mathcal O}_{P^3}  \arrow[r,"A^o"]& W' \otimes  {\mathcal O}_{P^3}(1)  
\arrow[r] & 0, 
\end{tikzcd}
\label{eq:new}
\end{equation}
where $A$ and $A^o$ are  as follows.    
The fundamental datum for the monad is an injective linear map  {$A: W \otimes Z \to V$}.  
This defines the vector bundle map $W \otimes {\mathcal O}_{P^3}(-1) \to V \otimes  {\mathcal O}_{P^3}$. 
Composing the transpose $A': V' \to  W' \otimes Z'$ with the 
canonical isomorphism $s: V \to V'$, defined by $s(v) = \langle v , \cdot \rangle$, yields the surjective 
linear map $A^o :=A' \circ s: V \to  W' \otimes Z'$.  This defines 
the  vector bundle map $V \otimes  {\mathcal O}_{P^3}  \to W' \otimes  {\mathcal O}_{P^3}(1)$.  
These vector bundle maps are required to fit together as a complex,  meaning that the kernel of an outoging 
map contains the image of the incoming map.  This imposes conditions on the 
fundamental datum $A: W\otimes Z \to V$, which are discussed in Proposition \ref{thm:addef}.     

Self-dual linear monads correspond to a class of holomorphic 
vector bundles over $P^3$, which we will call  {\it instanton bundles} in this paper (the term 
 {\it mathematical instanton bundles} is also used).  
The construction an instanton bundle as the cohomology of the monad is straightforward.  
The kernel $U^o := \kernel A^o$ is a  subbundle of the trivial bundle $V \otimes  {\mathcal O}_{P^3}$, with  $\rk U^o = \dim V - \dim W$. 
Similarly, the image $U := \image A$ is a subbundle of $V \otimes  {\mathcal O}_{P^3}$, with rank  $\rk U = \dim W$.  
Furthermore since this is a complex, $U$ is a subbundle of $U^o$  and the cohomology of the complex is the 
instanton bundle  $E : = U^o/U$  over $P^3$.    
 The rank  of the instanton bundle $\rk E = \dim V - 2 \dim W$  is even, since  
 the  symplectic vector space  $V$ has even dimension.  Furthermore the holomorphic 
 vector bundle $E$ inherits a complex symplectic structure, so the the line bundle $\det E$ is trivial 
 and $c_1(E) = c_1(\det E) = 0$.   We are primarily interested in the instanton bundles 
 of rank $\rk E = 2$,  where the  {\it instanton number} (or {\it  charge}) is \cite{At1,ADHM} the integer    $c_2(E) = \dim W \ge0$.

The inverse correspondence, constructing a monad from an instanton bundle,  
is important for studying instanton bundles with a group action.  Representation theory 
the affords  powerful tools for studying   linear maps $A: W \otimes Z \to V$ in the equivariant case. 
The starting point for the present paper is the following result of \cite{BS}.  Here the group $\sltc$ acts on $P^3 = P(Z)$ by 
identifying $Z := \irrep 3$ with the irreducible representation of on $\comp^4$:   

\begin{theorem}  (\cite{BS}) 
\label{thm:bsres}
For each nonnegative integer $m$ there exists an $\sltc$ equivariant  instanton bundle $E_m$ with rank $\rk E_m = 2$ and instanton number $c_2(E_m) = \tfrac{1}{2} m (m+1)$.    
\end{theorem}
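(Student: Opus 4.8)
The plan is to prove this existence statement by exhibiting, for each $m$, an explicit $\sltc$-equivariant self-dual linear monad over $P^3 = P(\irrep 3)$ and taking $E_m$ to be its cohomology bundle. Since $Z = \irrep 3$ and every piece of the monad is to be $\sltc$-equivariant, the construction is purely representation-theoretic: $W$ and $V$ are finite-dimensional $\sltc$-representations, $V$ is equipped with an $\sltc$-invariant symplectic form (which supplies $s\colon V \to V'$ and hence $A^o = A' \circ s$), and $A\colon W \otimes \irrep 3 \to V$ is an $\sltc$-map. So only $W$, $V$, the symplectic form, and $A$ need to be produced, after which the monad axioms of Proposition \ref{thm:addef} have to be verified.

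First I would pin down the representations $W = W_m$ and $V = V_m$. The conditions $\rk E_m = \dim V - 2\dim W = 2$ and $c_2(E_m) = \dim W$ force $\dim W_m = \tfrac{1}{2} m(m+1)$ and $\dim V_m = m(m+1)+2$, and the existence of an invariant symplectic form on $V_m$ restricts its isotypic type, since every odd irreducible $\irrep{2k+1}$ carries such a form while every even one carries only a symmetric one; it can for instance be arranged that $V_m$ is a direct sum of odd $\irrep{\bullet}$'s. I would write out the precise decompositions $W_m = \bigoplus \irrep{\bullet}$ and $V_m = \bigoplus \irrep{\bullet}$ dictated by the geometry — $W_m$ must be the cohomology module $H^1(P^3, E_m(-2))$ of the bundle one is building, and similarly for $V_m$ — and record the regular growth $\dim W_{m+1} = \dim W_m + (m+1)$, $\dim V_{m+1} = \dim V_m + 2(m+1)$, which points to an inductive structure to be used below. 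Note that for $m = 0$ one has $W_0 = 0$, $V_0 = \irrep 1$, the monad is trivial, and $E_0 = \irrep 1 \otimes \mathcal{O}_{P^3}$ is the trivial rank-two bundle, so the real content is $m \ge 1$.

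Next I would parametrize $A$. By the Clebsch--Gordan formula \eqref{eq:4.1}, $W_m \otimes \irrep 3$ decomposes into irreducibles with explicit multiplicities, so by Schur's lemma the space of $\sltc$-maps $W_m \otimes \irrep 3 \to V_m$ has dimension equal to the number of shared constituents, and each elementary component of $A$ is a scalar multiple of a transvectant pairing as in \eqref{eq:4.2}; thus $A$ is encoded by a finite list of ``coupling constants.'' I would then impose the monad axioms of Proposition \ref{thm:addef}. The complex condition $A^o \circ A = 0$ is an $\sltc$-equivariant system of quadratic equations in the coupling constants, and since $A^o \circ A$ is forced to lie in a small subrepresentation of $\mathrm{Hom}(W_m, W_m') \otimes \mathrm{Sym}^2\irrep 3$ (here $\mathrm{Sym}^2\irrep 3 = \irrep 6 \oplus \irrep 2$), equivariance collapses it to a handful of scalar identities; I expect a closed-form ansatz for the coupling constants — ratios of factorials or binomial-type coefficients in $m$ and the irreducible labels — to turn those identities into combinatorial ones that can be checked directly. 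The remaining axioms are the non-degeneracy conditions: the bundle map induced by $A$ must be fiberwise injective, and the one induced by $A^o$ fiberwise surjective, at every point of $P^3$.

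The main obstacle will be exactly this global non-degeneracy, since it is what makes $\image A$ and $\kernel A^o$ genuine subbundles and hence $E_m$ locally free. The decisive tool here is that $P^3 = P(\irrep 3)$ is almost-homogeneous under $\sltc$: by equivariance the ranks of $A_\ell$ and $A^o_\ell$ are constant along each $\sltc$-orbit, so it suffices to check them at one representative of each of the three orbits of binary cubics — one with three distinct roots (the open orbit), one with a double and a simple root, and one with a triple root (the twisted cubic) — reducing an a priori infinite family of tests to a short explicit linear-algebra computation on each stratum. I anticipate that verifying non-degeneracy at the small boundary orbits is the delicate step, possibly requiring a careful choice of coupling constants among the solutions of the complex condition, most naturally organized by induction on $m$ via the growth pattern above. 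Once the monad is known to be non-degenerate, $E_m := \kernel A^o / \image A$ is locally free of rank $\dim V_m - 2\dim W_m = 2$, inherits the symplectic structure of $V_m$ (so $c_1(E_m) = 0$), and has $c_2(E_m) = \dim W_m = \tfrac{1}{2} m(m+1)$ by the standard monad Chern-class computation, completing the construction.
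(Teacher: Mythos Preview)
Your plan follows the same overall strategy as the paper: build an $\sltc$-equivariant self-dual monad by choosing $W$ and $V$ as explicit $\sltc$-representations, parametrizing $A$ by transvectant ``coupling constants'' via Clebsch--Gordan and Schur, and verifying the monad axioms of Proposition~\ref{thm:addef}. The paper's execution differs from your sketch in two places worth noting. First, rather than working with $V_m$ directly, the paper enlarges it to the multiplicity-free $\hat V(m) = \irrep 1 \oplus \irrep 3 \oplus \cdots \oplus \irrep{2m+1}$, constructs a monad $\hat A\colon W(m)\otimes\irrep 3 \to \hat V(m)$ whose cohomology $\hat E_m$ has rank $2+2m$, and then arranges the single vanishing $a_{m-1,1}=0$ so that $\hat E_m$ splits off a trivial $\irrep{2m-1}$-summand, leaving the desired rank-two $E_m$; this keeps the component bookkeeping uniform in $m$. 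Second --- and this is where your expectations diverge most from what actually happens --- the ``main obstacle'' you anticipate, global non-degeneracy of the bundle map $A$, is in the paper's setup not an orbit-by-orbit geometric check at all: it reduces (Theorem~\ref{thm:eqad}) to the purely algebraic condition $a_{l,0}\neq 0$ for every $l$, while the isotropy condition becomes a short list of quadratic relations among the $a_{l,p}$. The explicit square-root ansatz of Proposition~\ref{thm:yes} then verifies all of these simultaneously and visibly gives $a_{m-1,1}=0$. Your stratification argument over the three $\sltc$-orbits of binary cubics would also establish injectivity, but it is more laborious than necessary and obscures the simple reason the construction works.
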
 

\noindent  The explicit construction from \cite{BS} of the equivariant monads for these $E_m$ is summarized in  Appendix \ref{sec:monads}. 
The paper \cite{BS}  focused on  $SU(2)$ equivariant ``real" instanton bundles, where the monad is required to satisfy  
an additional ``ADHM reality condition."    This is an equivariant case of 
the famous  Atiyah-Drinfeld-Hitchin-Manin correspondence  \cite{At1,ADHM} between ``real" instanton bundles on $P^3$ and 
 Yang-Mills instantons on $S^4$.    The existence result 
Theorem \ref{thm:bsres} follows immediately from  \cite{BS} by forgetting the real structures and complexifying the group action.   

Uniqueness of equivariant instanton bundles depends on the notion of equivalence.      
In \cite{BS} it was shown that every $SU(2)$ equivariant {\it real} instanton bundle of rank $2$ is equivalent to one of the bundles $E_m$ of  Theorem \ref{thm:bsres}.   But this uniqueness result does not preclude the possibility that there could exist other $\sltc$ equivariant rank $2$ instanton bundles that do not admit a  real structure.   That possibility was eliminated in subsequent work: 
 
 \begin{theorem}  (\cite{SegU}) 
\label{thm:nonbs} Every $\sltc$ equivariant instanton bundle $E$ of rank $\rk E = 2$ is isomorphic to one of the instanton bundles 
$E_m$ of Theorem \ref{thm:bsres}. 
\end{theorem}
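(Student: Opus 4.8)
The plan is to convert the classification into a finite problem in the representation theory of Section~\ref{sec:reps} via the monad correspondence, and then to solve that problem by expanding the monad maps in transvectants. First I would recall that the display of Eqn.~\ref{eq:new} is recovered from $E$ by functorial cohomological (Beilinson-type) operations: $W$ and $V$ arise as suitable twisted cohomology groups of $E$, and $A, A^o$ as the attendant comparison maps (see Appendix~\ref{sec:monads}). Hence an $\sltc$-action on $E$ makes $W$ and $V$ into $\sltc$-modules for which $A\colon W\otimes Z\to V$ is equivariant and the symplectic form on $V$ is invariant, with $Z=\irrep{3}$; and two equivariant rank-$2$ instanton bundles are equivariantly isomorphic exactly when their equivariant monads are. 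It therefore suffices to classify, up to equivariant isomorphism, the $\sltc$-equivariant self-dual linear monads over $P(\irrep{3})$ with $\rk E=2$, i.e.\ the pairs $(W,V)$ of finite-dimensional $\sltc$-modules with $\dim V=2\dim W+2$ and $V$ carrying an invariant symplectic form, together with an equivariant $A\colon W\otimes\irrep{3}\to V$ satisfying the hypotheses of Proposition~\ref{thm:addef}.

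Next I would make the reduction explicit. Writing $W=\bigoplus_i\irrep{w_i}$ and $V=\bigoplus_j\irrep{v_j}$, the invariant symplectic form on $V$ forces the even-label summands to occur in hyperbolic pairs while the odd-label summands may occur with arbitrary multiplicity, each carrying its intrinsic antisymmetric form $\cbra{\cdot}{\cdot}{}$ from Eqn.~\ref{eq:4.3}. By Clebsch--Gordan, Eqn.~\ref{eq:4.1}, each $\irrep{w_i}\otimes\irrep{3}$ splits as $\irrep{w_i+3}\oplus\irrep{w_i+1}\oplus\irrep{w_i-1}\oplus\irrep{w_i-3}$ (discarding negative labels), so by Schur's lemma $A$ is encoded by a finite list of scalars, or matrices over the multiplicity spaces, its isotypic components being the transvectants $\cbra{\cdot}{\cdot}{p}$ of Eqn.~\ref{eq:4.2} up to those scalars; the same data determine $A^o=A'\circ s$ through the form on $V$. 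The complex condition $A^o\circ A=0$ then becomes a system of quadratic equations in these scalars, an identity in the finite-dimensional space $\mathrm{Hom}_{\sltc}(W\otimes\operatorname{Sym}^2\irrep{3},\,W')$, which sharply reduces the number of free parameters.

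The crux is the non-degeneracy requirement of Proposition~\ref{thm:addef}: for every nonzero $z\in\irrep{3}$ the evaluation $A_z\colon W\to V$ must be injective, equivalently $A^o_z\colon V\to W'$ surjective, so that the cohomology is an honest rank-$2$ bundle. Since $\sltc$ acts on $P(\irrep{3})$ with a dense orbit, it is enough to test $A_z$ at one representative of each of the three orbit types --- a binary cubic with three distinct roots, one with a double root, and $z=x^3$ --- and at each such $z$ the map $A_z$ is an explicit matrix in the surviving parameters and in transvectant values computable from Eqns.~\ref{eq:4.4} and \ref{eq:4.8}. I expect these injectivity conditions to force $\dim W$ to be exactly the triangular numbers $\tfrac12 m(m+1)$, ruling out all other values of $c_2(E)$, and then, within each admissible charge, to pin down the decomposition of $W$ (conjecturally $W\cong\operatorname{Sym}^{m-1}(\irrep{2})$) and the remaining parameters of $A$ up to the residual rescaling freedom, matching the monad of $E_m$ from Appendix~\ref{sec:monads}. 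The main obstacle is exactly this bookkeeping: for a fixed dimension $W$ can decompose in many ways, and one must show that the non-degeneracy constraints over the three orbit types eliminate every decomposition but one and then pin the surviving monad uniquely. A possible shortcut, should the direct analysis prove unwieldy, is to argue that the smallness of the relevant equivariant $\mathrm{Hom}$-spaces forces every equivariant monad to admit a compatible ADHM real structure, so that the uniqueness-up-to-real-structures result of \cite{BS} applies; but verifying that the real structure is forced reduces to essentially the same rigidity computation. Either way, once the list of equivariant monads is shown to be exhausted by those of Appendix~\ref{sec:monads}, the theorem follows.
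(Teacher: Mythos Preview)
The paper does not actually prove this theorem here; it is quoted from \cite{SegU} and only a sketch of the argument is given in the surrounding text and in Proposition~\ref{thm:ssimp}. Your overall architecture---pass to the equivariant monad via Beilinson/Drinfeld--Manin, then analyze the equivariant linear data---matches that sketch. But you diverge at the decisive step. The paper's route to Proposition~\ref{thm:ssimp}, i.e.\ to the identification $W\cong W(m)$ and $V\cong V(m)$, is \emph{not} a direct combinatorial analysis of the injectivity and isotropy conditions over the three $\sltc$-orbit types. Instead the key tool is the Atiyah--Bott--Lefschetz fixed-point theorem \cite{AB}, applied to the cohomological realizations $V=H^1(E\otimes\Omega^1)$ and $W'=H^1(E(-1))$: the fixed-point formula computes the characters of these $\sltc$-modules outright, and from the characters one reads off the isotypic decompositions $W(m)$, $V(m)$ with no case analysis at all. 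Only \emph{after} $W$ and $V$ are pinned down does one turn to the coefficient equations of Theorem~\ref{thm:eqad} and solve them up to the equivalence described at the end of Section~\ref{sec:monads}.

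Your proposal, by contrast, hopes that the injectivity of $A_z$ at orbit representatives will by itself force $\dim W=\tfrac12 m(m+1)$ and the specific decomposition of $W$. That is the step you yourself flag as the ``main obstacle,'' and it is a real one: for a given $\dim W$ there are many candidate decompositions of $W$ and of the symplectic module $V$, and nothing in the monad axioms of Proposition~\ref{thm:addef} obviously rules them out without a global invariant like a character. The Atiyah--Bott input is precisely what cuts through this combinatorial explosion. Your suggested shortcut---show that an equivariant monad is automatically real and then invoke \cite{BS}---is in the paper a \emph{corollary} of Theorem~\ref{thm:nonbs}, not a route to it, and would in any case require the same rigidity you have not established. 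So the gap is concrete: you are missing the fixed-point computation of the characters of $W$ and $V$, which is the load-bearing idea in the paper's proof.
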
 

\noindent   A corollary is that every $\sltc$ equivariant instanton bundle $E$ of rank $\rk E =2$ admits a real structure.  However that  
this is  true {\it only} for  $\rk E =2$, and fails for higher rank.    
The proof of Theorem \ref{thm:nonbs} in \cite{SegU} reworks the ideas of \cite{BS} within the context  
 complex geometry of the Fano threefold $P^3$, bypassing any steps that require the reality condition or 
 study of Yang-Mills instantons on $S^4$.     
The main ingredient is the Drinfeld-Manin   identification \cite{ADHM, At1} of $A^o: V \to W' \otimes Z'$ 
with a  natural map 
$H^1(E \otimes \Omega^1) \to H^1(E(-1)) \otimes H^0( \mathcal{O}(1))$ of sheaf cohomology groups,  where 
$Z' = H^0( \mathcal{O}(1))$, 
$V =H^1(E \otimes \Omega^1)$,  and $W' =  H^1(E(-1))$ is the dual of $W = H^1(E \otimes \Omega^2(1))$.  
The analogous computation of \cite{BS} for the ``real" case was based on the identification 
of $W$ and $V$ with the kernels of certain Dirac operators \cite{Hitr,At1} on $S^4$ coupled 
to the corresponding Yang-Mills instanton connection.  In both cases the key step is using the  Atiyah-Bott-Lefschetz  fixed-point theorem \cite{AB}  to determine  characters of the admissible group actions on $W$ and $V$.

 \subsection{Construction Overview} 
 \label{sec:monads}

This section contains an overview of instanton bundles from \cite{At1, ADHM}, and 
an overview of the equivariant instanton bundles $E_m$ from  \cite{BS}.  
  The following fundamental result of 
 Atiyah-Drinfeld-Hitchin-Manin \cite{ADHM, At1} translates the self-dual monads of eqn.\,\ref{eq:new} to linear algebra data:

 \begin{proposition}
  \label{thm:addef}   
 Let $W$ be a complex vector space and let $V$ be a complex vector space with symplectic form $\langle \cdot, \cdot \rangle$. 
 A linear map $A: W \otimes Z \to V$ defines a self-dual monad as in eqn.\,\ref{eq:new}  
 iff the following conditions hold:   
 \begin{enumerate}
 \item The ``injectivity condition": $A(w\otimes z) = 0$ only if $w \otimes z = 0$. 
 \item The ``isotropy condition": $\langle A(w \otimes z), A(w' \otimes z) \rangle =0$ for all $z \in Z$ and all $w,w' \in W$. 
 \end{enumerate}
 \end{proposition}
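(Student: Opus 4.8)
The plan is to unwind each of the three conditions that make the complex in eqn.\,\ref{eq:new} a \emph{monad} --- the bundle map $W\otimes\mathcal{O}_{P^3}(-1)\to V\otimes\mathcal{O}_{P^3}$ induced by $A$ is injective on every fiber, the bundle map $V\otimes\mathcal{O}_{P^3}\to W'\otimes\mathcal{O}_{P^3}(1)$ induced by $A^o$ is surjective on every fiber, and the composite of the two vanishes --- into a statement about the linear datum $A$, by evaluating at an arbitrary point of $P^3$. A point of $P^3=P(Z)$ is a line $\comp z$ with $z\in Z\setminus\{0\}$; the fiber of $\mathcal{O}_{P^3}(-1)$ at $[z]$ is $\comp z$ and the fiber of $\mathcal{O}_{P^3}(1)$ at $[z]$ is the dual line $(\comp z)'$. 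Using that a bundle map $W\otimes\mathcal{O}_{P^3}(-1)\to V\otimes\mathcal{O}_{P^3}$ is a global section of $W'\otimes V\otimes\mathcal{O}_{P^3}(1)$, hence an element of $W'\otimes V\otimes Z'=\operatorname{Hom}(W\otimes Z,V)$ (and similarly a bundle map $V\otimes\mathcal{O}_{P^3}\to W'\otimes\mathcal{O}_{P^3}(1)$ is an element of $\operatorname{Hom}(V,W'\otimes Z')$), one identifies the first bundle map at $[z]$ with $A|_{[z]}\colon W\otimes\comp z\to V$, $w\otimes z\mapsto A(w\otimes z)$, and the second at $[z]$ with the map sending $v$ to the image of the functional $A^o(v)\colon u\mapsto\langle v,A(u)\rangle$ on $W\otimes Z$ under the restriction $W'\otimes Z'\to W'\otimes(\comp z)'$.

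First I would treat fiberwise injectivity of the first map. Via $W\cong W\otimes\comp z$, $w\mapsto w\otimes z$, this says $A(w\otimes z)=0\Rightarrow w=0$ at every $[z]$; since $w\otimes z=0$ iff $w=0$ or $z=0$, requiring it for all $z\neq 0$ is precisely the injectivity condition (1). Next I would compute the composite bundle map $W\otimes\mathcal{O}_{P^3}(-1)\to W'\otimes\mathcal{O}_{P^3}(1)$ on the fiber at $[z]$: it carries $w'\otimes z$ to the element of $W'\otimes(\comp z)'$ given by the functional $w\mapsto\langle A(w'\otimes z),A(w\otimes z)\rangle$ on $W$. Hence the composite is the zero bundle map exactly when $\langle A(w'\otimes z),A(w\otimes z)\rangle=0$ for all $z\in Z$ and $w,w'\in W$, which is the isotropy condition (2).

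Finally I would show that fiberwise surjectivity of the second map is equivalent to (1), by passing to transposes. The transpose of $V\otimes\mathcal{O}_{P^3}\to W'\otimes\mathcal{O}_{P^3}(1)$ is a bundle map $W\otimes\mathcal{O}_{P^3}(-1)\to V'\otimes\mathcal{O}_{P^3}$, and unwinding $A^o=A'\circ s$ shows that on the fiber at $[z]$ it equals $s\circ A|_{[z]}\colon W\otimes\comp z\to V\to V'$. Since $s\colon V\to V'$ is an isomorphism, the second map is fiberwise surjective iff $A|_{[z]}$ is fiberwise injective for every $[z]$, i.e.\ iff (1) holds. Each of these three statements is an equivalence, so combining them proves the proposition in both directions; moreover, when the complex is a monad, $\image A\subseteq\kernel A^o$ are subbundles of $V\otimes\mathcal{O}_{P^3}$ and the cohomology $E=\kernel A^o/\image A$ is automatically a holomorphic vector bundle.

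I do not expect a substantive obstacle: the whole content is the dictionary between the bundle maps and the linear algebra. The one point requiring care is clerical bookkeeping --- correctly identifying the global sections of the relevant $\operatorname{Hom}$ bundles with $\operatorname{Hom}(W\otimes Z,V)$ and $\operatorname{Hom}(V,W'\otimes Z')$, and keeping the tensor-factor ordering and the transpose straight so that the fiberwise formulas, and in particular the appearance of $\langle A(w'\otimes z),A(w\otimes z)\rangle$ in the composite, come out correctly. This is the classical self-dual ADHM correspondence, recorded here in a form convenient for the equivariant computations to follow.
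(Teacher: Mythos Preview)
Your proposal is correct and follows essentially the same approach as the paper's proof: the injectivity condition is equivalent to exactness at $W\otimes\mathcal{O}_{P^3}(-1)$, the isotropy condition is equivalent to the sequence being a complex at $V\otimes\mathcal{O}_{P^3}$, and exactness at $W'\otimes\mathcal{O}_{P^3}(1)$ follows from the first by duality. You simply spell out the ``by duality'' step (via the transpose and the isomorphism $s$) more explicitly than the paper does.
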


\begin{proof} The sequence is exact at   
$W \otimes {\mathcal O}_{P^3}(-1)$, 
meaning that the image of the incoming map is equal to the  kernel of the outgoing map, iff the  injectivity condition holds. By duality, the sequence is also exact at  $W' \otimes  {\mathcal O}_{P^3}(1)$ in this case.    
The sequence is a complex at $V \otimes  {\mathcal O}_{P^3}$, meaning that the image of the incoming map is contained in the kernel of the outgoing map,  iff the  isotropy condition holds.    We note that the cohomology of this complex is concentrated at $V \otimes  {\mathcal O}_{P^3}$, the cohomology is zero at $W \otimes {\mathcal O}_{P^3}(-1)$ and at $W' \otimes  {\mathcal O}_{P^3}(1)$.  
\end{proof} 

\noindent  Yang-Mills instantons over $S^4$ correspond self-dual monads that are ``real,"  meaning  that 
the self-dual monads satisfy an additional ``reality condition" \cite{At1,ADHM,BS}.

The paper \cite{BS} studied  Yang-Mills instantons with a certain  $SU(2)$ ``quadrupole symmetry", by constructing  
the corresponding $SU(2)$ equivariant real self-dual monads.    Forgetting the real structure and complexifying 
the $SU(2)$ action on the holomorphic bundles yields the 
 $\sltc$ equivariant instanton bundles used for the construction of Painlev\'e VI solutions discussed in the present paper. 
As previously mentioned   in Section \ref{sec:adhm}, it is known a posteriori \cite{SegU} that 
imposing and subsequently forgetting the reality condition  
produces all   $\sltc$ equivariant instanton bundles for rank $2$, although this is not true for higher rank.    
The  {\it rank} of a self-dual monad is defined to be the rank of the corresponding instanton bundle,  $\rk E = \dim V - 2 \dim W$.  
 
\begin{proposition} 
\label{thm:ssimp}
 (\cite{BS,SegU})  If $\sltc$ acts on a rank $2$ self-dual monad with $Z = \irrep 3$, 
then $W=W(m)$ and $V=V(m)$ for some nonnegative integer $m$, where:   
\begin{align*}  
W(m) = \bigoplus_{\substack{0 \le l \le m-1,\\ l \equiv m-1 \Mod 2}} \irrep {2l}, \qquad\qquad
 V(m) \oplus \irrep {2m-1}& =\bigoplus_{m'-1\le j \le m} \irrep {2 j +1}.
\end{align*}
\end{proposition}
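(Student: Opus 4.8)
The plan is to determine the $\sltc$-module structure of $W$ and $V$ from equivariance of the monad data together with the rank-$2$ hypothesis. Since $\sltc$ acts on the self-dual monad of eqn.\,\ref{eq:new}, it acts on the instanton bundle $E$ of rank $2$ with $c_1(E)=0$, hence $E$ carries an $\sltc$-invariant symplectic form and its Chern classes are $\sltc$-invariant. The key input is the Drinfeld--Manin identification (recalled in Section \ref{sec:adhm}) of $W$ and $V$ with sheaf cohomology groups built from $E$: namely $W' = H^1(E(-1))$ (so $W = H^1(E\otimes\Omega^2(1))$ by Serre duality) and $V = H^1(E\otimes\Omega^1)$. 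First I would invoke the Atiyah--Bott--Lefschetz fixed-point theorem \cite{AB} for the $\sltc$-action (equivalently its maximal torus) on $P^3 = P(\irrep 3)$ to compute the \emph{characters} of these torus representations. The torus fixed points on $P(\irrep 3)$ are the four coordinate points corresponding to the weight vectors of $\irrep 3$, and the local contributions at each fixed point involve the weights of the tangent space and of the fibre of $E$; this pins down $W$ and $V$ as virtual characters in terms of the two unknown weights of the rank-$2$ fibre of $E$ at a fixed point.

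Next I would translate the virtual-character computation into actual $\sltc$-modules. Because $W = H^1(E\otimes\Omega^2(1))$ and $V = H^1(E\otimes\Omega^1)$ are genuine (not merely virtual) cohomology groups, the alternating sum produced by Atiyah--Bott must in each case be an honest nonnegative combination of the irreducibles $\irrep d$; imposing this nonnegativity, together with the symplectic self-duality forcing $V\cong V'$ (so $V$ is a sum of \emph{odd}-dimensional... more precisely, $V$ must be symplectic, hence each $\irrep{2j+1}$ appears and the even pieces pair up), and the vanishing conditions $H^0(E(-1)) = H^3(E\otimes\Omega^1) = 0$ coming from the monad being a genuine complex with cohomology only in the middle, forces the fibre weights of $E$ at the fixed point to be $\pm$ a single integer. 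That integer is the parameter; rescaling it to $2m$ (or the appropriate shift) yields the stated formulas, and the auxiliary summand $\irrep{2m-1}$ on the left side of the $V$-equation is precisely the correction term $H^1(E\otimes\Omega^1)$ versus the ``full'' cohomology of $\Omega^1$-twisted line bundles that gets subtracted off. I would then simply read off
\begin{align*}
W(m) = \bigoplus_{\substack{0\le l\le m-1,\\ l\equiv m-1\Mod 2}} \irrep{2l},
\qquad
V(m)\oplus\irrep{2m-1} = \bigoplus_{m-1\le j\le m}\irrep{2j+1}
\end{align*}
by expanding the character as a sum of $\irrep d$'s using the Weyl character formula (i.e.\ writing the $SL_2$ character as a ratio and doing the geometric-series bookkeeping).

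The main obstacle I expect is the bookkeeping in the Atiyah--Bott computation: correctly identifying the weights of $\Omega^1_{P^3}$, $\Omega^2_{P^3}$, and $\mathcal{O}(1)$ at each of the four torus fixed points of $P(\irrep 3)$, and keeping track of the twist-shifts so that the resulting rational function in the torus variable genuinely simplifies to a polynomial (which is the consistency check that confirms the fibre weights of $E$). A secondary subtlety is justifying that the parameter is forced to be an \emph{integer} of the displayed form rather than any integer, and that the parity condition $l\equiv m-1\pmod 2$ in the description of $W$ emerges correctly — this comes from the fact that $E\otimes\Omega^2(1)$ has a specific determinant twist relative to $E(-1)$, shifting weights by an odd amount. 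Once the character identity is established, matching it against the Clebsch--Gordan-type decomposition of eqn.\,\ref{eq:4.1} to extract the multiplicities is routine. Since this proposition is quoted from \cite{BS,SegU}, I would present the fixed-point character computation in outline and refer to those papers (and Appendix \ref{sec:monads}) for the full verification that the resulting $A: W\otimes Z\to V$ satisfies the injectivity and isotropy conditions of Proposition \ref{thm:addef}.
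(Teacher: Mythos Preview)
The paper does \emph{not} give its own proof of this proposition: it is stated with the attribution (\cite{BS,SegU}) and no argument is supplied in the text.  What the paper does do, at the end of Section~\ref{sec:adhm}, is describe in one paragraph the method used in those references: the Drinfeld--Manin identification $V = H^1(E\otimes\Omega^1)$, $W' = H^1(E(-1))$, $W = H^1(E\otimes\Omega^2(1))$, together with the Atiyah--Bott--Lefschetz fixed-point theorem to determine the characters of the admissible group actions on $W$ and $V$.  Your proposal follows exactly that outline, so in that sense you have correctly reconstructed the intended strategy.

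That said, your sketch has a few soft spots worth flagging before you call it a proof.  First, the sentence about $V$ being ``a sum of \emph{odd}-dimensional\dots'' trails off and the parenthetical that follows is muddled: the relevant constraint is that $V$ carries an invariant symplectic form, hence decomposes into irreducibles $\irrep d$ with $d$ odd (these are the symplectic ones by eqn.\,\ref{eq:4.3}); you should state this cleanly.  Second, you correctly anticipate that the delicate step is showing the Atiyah--Bott virtual character is an \emph{actual} character, but you do not say concretely which vanishing results for the other cohomology groups $H^i(E(-1))$ and $H^i(E\otimes\Omega^1)$ (for $i\neq 1$) you are invoking --- these are standard for instanton bundles but need to be named.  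Third, your closing remark about verifying that $A$ satisfies injectivity and isotropy is misplaced: Proposition~\ref{thm:ssimp} is only about the module structure of $W$ and $V$, and the conditions on $A$ belong to Theorem~\ref{thm:eqad} and Proposition~\ref{thm:yes}, not here.
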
 
\noindent It is convenient to consider $V(m)$ as a summand of
\begin{align*}
\hat  V(m) : = V(m) \oplus \irrep {2m-1}=  \irrep {1} \oplus  \irrep{3} \oplus\cdots  \oplus\irrep {2m-1}  \oplus \irrep {2 m +1}. 
\end{align*}
For example,   $\hat V(3) = \irrep 1 \oplus \irrep 3 \oplus \irrep 5 \oplus \irrep 7$, and  cancellation of  
$\irrep{2m -1} = \irrep 5$ yields  
$V(3) = \irrep 1 \oplus \irrep 3 \oplus \irrep 7$. 
 Note that $\hat V(0) = V(0) = \irrep 1$ because $\irrep {-1} := 0$.  
Both $\hat V(m)$ and $V(m)$ have an $\sltc$ invariant $\comp$-bilinear symplectic form $\cbra \cdot \cdot {}$ defined by eqn.\,\ref{eq:4.3}. 
For $W(m)$ it is convenient to separate the odd and even cases. 
For odd $m= 2k+1 \ge 0$, 
\begin{align*}  
W(2k+1) &=\irrep{0} \oplus \irrep{4} \oplus \cdots \oplus \irrep {4k-4}\oplus \irrep{4k}.
\end{align*}
For example, $W(3) = \irrep 0 \oplus \irrep 4$.  
For even $m = 2k \ge 0$, 
\begin{align*}  
W(2k) &=   \irrep{2} \oplus \irrep{6} \oplus \cdots 
\oplus \irrep {4k-6} \oplus \irrep{4k-2}. 
\end{align*}
Note that $W(0) = 0$ because the index set of the sum is empty.

It is  similarly convenient to consider an equivariant instanton bundle $E$ of rank $2$ 
 as a summand of an equivariant instanton bundle $\hat E$ 
 corresponding to the equivariant self-dual monad
\begin{equation*}
\begin{tikzcd}[row sep=normal, column sep=normal]
0 \arrow[r] &  W(m) \otimes {\mathcal O}_{P^3}(-1)  \arrow[r,"\hat A"] &  \hat V(m) \otimes  {\mathcal O}_{P^3}  \arrow[r,"\hat A^o"]& 
 W'(m) \otimes  {\mathcal O}_{P^3}(1)  \arrow[r] & 0. 
\end{tikzcd}
\end{equation*} 
The rank is  $\rk \hat E = \dim \hat V(m) - 2 \dim W(m) =2 + \dim \irrep {2m-1}$.  
Whenever the image of   $\hat A: W(m) \otimes Z \to \hat V(m)$ is contained in $V(m) \subseteq \hat V(m)$,  this 
monad splits as the direct sum
\begin{equation*}
\begin{tikzcd}[row sep=tiny, column sep=normal]
0 \arrow[r] &  W(m) \otimes {\mathcal O}_{P^3}(-1)  \arrow[r,"A"] &   V(m) \otimes  {\mathcal O}_{P^3}  \arrow[r," A^o"]& 
 W'(m) \otimes  {\mathcal O}_{P^3}(1)  \arrow[r] & 0\\
 & \oplus &   \oplus & \oplus \\
0 \arrow[r] & 0 
 \arrow[r] 
&  \irrep{m-1} \otimes  {\mathcal O}_{P^3}  \arrow[r] 
& 0 
  \arrow[r] 
& 0. 
\end{tikzcd}
\end{equation*} 
The equivariant instanton bundle $\hat E$ then splits as the direct sum 
of the desired rank $2$ equivariant instanton bundle $E$ and the trivial equivariant bundle with fiber $\irrep{2m-1}$.     

Following \cite{BS},  we describe  equivariant self-dual monads in terms of a set of 
 coefficients $a_{l,p}$, which are complex numbers in the general case: 

\begin{proposition} 
\label{thm:cofdef}  
(\cite{BS})  Let $W = W(m)$ and $\hat V = \hat V(m)$ and $Z = \irrep 3$. 
Then an equivariant linear map $\hat A: W \otimes Z \to \hat V$ is the direct sum 
\begin{align*} 
\hat A &= \bigoplus_{\substack{0 \le l \le m-1,\\ l \equiv m-1 \Mod 2}} \hat A_l
\end{align*}
of the component maps  
\begin{align*} 
\hat A_l: \irrep {2l} \otimes \irrep 3 \to \hat V, \qquad  w \otimes z \mapsto 
\bigoplus_{0 \le p \le \min(2l,3)} a_{l,p} \, \cbra w z p. 
\end{align*}
\end{proposition}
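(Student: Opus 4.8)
The plan is to reduce the statement to a routine application of Schur's lemma together with the Clebsch-Gordan formula, eqn.\,\ref{eq:4.1}. First I would use that $W = W(m)$ is a direct sum $\bigoplus_{l}\irrep{2l}$ over the relevant index set, so that the domain splits as $W \otimes Z = \bigoplus_{l}\bigl(\irrep{2l}\otimes\irrep 3\bigr)$; since an $\sltc$-equivariant linear map out of a direct sum is the direct sum of its restrictions to the summands, we get $\hat A = \bigoplus_l \hat A_l$ with each $\hat A_l\colon \irrep{2l}\otimes\irrep 3\to\hat V$ equivariant. It then suffices to describe the space $\mathrm{Hom}_{\sltc}(\irrep{2l}\otimes\irrep 3,\hat V)$ for a single $l$.

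Next I would decompose both sides into irreducibles. By the Clebsch-Gordan formula, eqn.\,\ref{eq:4.1}, $\irrep{2l}\otimes\irrep 3 = \bigoplus_{0\le p\le\min(2l,3)}\irrep{2l+3-2p}$, a multiplicity-free decomposition, while from Section \ref{sec:monads} we have $\hat V = \hat V(m) = \irrep 1\oplus\irrep 3\oplus\cdots\oplus\irrep{2m+1}$, which is also multiplicity-free. A quick index check shows that every constituent of $\irrep{2l}\otimes\irrep 3$ actually occurs in $\hat V$: the degree $2l+3-2p$ is odd, it is at most $2l+3\le 2(m-1)+3 = 2m+1$, and it is at least $1$ since $p\le\min(2l,3)\le l+1$. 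Consequently, by Schur's lemma over the algebraically closed field $\comp$, the space $\mathrm{Hom}_{\sltc}(\irrep{2l}\otimes\irrep 3,\hat V)$ has dimension $\min(2l,3)+1$, with one basis vector for each admissible $p$, namely the equivariant projection onto $\irrep{2l+3-2p}$ followed by the inclusion $\irrep{2l+3-2p}\hookrightarrow\hat V$.

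Then I would identify those basis maps with the transvectants. As recorded in Section \ref{sec:reps}, the restriction of the $p$-th transvectant is an equivariant bilinear map $\irrep{2l}\times\irrep 3\to\irrep{2l+3-2p}$ which is nonzero precisely when $0\le p\le\min(2l,3)$; composing with the inclusion into $\hat V$ therefore produces, for each such $p$, a nonzero element of the one-dimensional space $\mathrm{Hom}_{\sltc}(\irrep{2l}\otimes\irrep 3,\irrep{2l+3-2p})$. Since distinct values of $p$ give maps landing in distinct summands of $\hat V$, these $\min(2l,3)+1$ maps are linearly independent, hence form a basis of $\mathrm{Hom}_{\sltc}(\irrep{2l}\otimes\irrep 3,\hat V)$. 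It follows that $\hat A_l(w\otimes z) = \bigoplus_{0\le p\le\min(2l,3)} a_{l,p}\,\cbra w z p$ — meaning that the $\irrep{2l+3-2p}$-component of $\hat A_l$ is $a_{l,p}$ times the $p$-th transvectant — for uniquely determined scalars $a_{l,p}\in\comp$, which is exactly the asserted formula; reassembling over $l$ completes the proof.

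I expect the only genuine friction to be the index bookkeeping: verifying in the small cases $l=0$ and $l=1$ that $\min(2l,3)$ is precisely the right cutoff, so that no transvectant vanishes spuriously and no target irreducible $\irrep{2l+3-2p}$ is absent from $\hat V$ (the latter being where the hypothesis $l\le m-1$ enters). Once those endpoint checks are in place, the argument is a standard instance of Schur's lemma applied to multiplicity-free $\sltc$-modules, and the parity restriction $l\equiv m-1\Mod 2$ on the summands of $W(m)$ plays no role beyond fixing the index set of the direct sum.
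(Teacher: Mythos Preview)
Your proposal is correct and follows essentially the same route as the paper: decompose via Clebsch--Gordan, observe that both $\irrep{2l}\otimes\irrep 3$ and $\hat V(m)$ are multiplicity-free with the former's constituents contained in the latter's, and invoke Schur's lemma. The paper's proof is considerably terser (it simply asserts that $\irrep{2j+1}$ occurs in $\hat V(m)$ iff it occurs in $W(m)\otimes\irrep 3$ and then appeals to Schur), whereas you have spelled out the index bookkeeping and the identification of the Schur basis with the transvectants; but the underlying argument is the same.
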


\begin{proof}  Using Clebsch-Gordan eqn.\,\ref{eq:4.1}, 
we check that $\irrep {2j+1}$ is a summand of  $\hat V(m)$ iff $\irrep{2j+1}$ is a summand of 
$W(m) \otimes \irrep 3$.    Noting that the multiplicity of each summand of $\hat V(m)$ is at most one, 
the assertion follows from Schur's Lemma.   
\end{proof} 
\noindent
Proposition \ref{thm:addef} then translates to conditions on the coefficients $a_{l,p}$ of $\hat A$.  

\begin{theorem}  
\label{thm:eqad}  
(\cite{BS}) 
 An equivariant linear  $\hat A: W \otimes Z \to \hat V$ defines 
an equivariant self-dual linear monad iff its coefficients $a_{l,p}$ satisfy:  
\begin{enumerate}
 \item  The ``injectivity condition": $a_{l,0} \ne 0$ for all $l$. 
 \item The ``isotropy condition" consists of the two conditions. 
 \begin{enumerate}
 \item The ``diagonal isotropy condition":
\begin{align*}
(2 l - 1)^2 \, a_{l,2}^2 &= (2 l + 1) \, a_{l,0}^2 + 2 l (2 l - 3) \, a_{l,1}^2 \qquad \text{whenever $1 \le l $}, \\
(2 l - 1)^2 \, a_{l,3}^2  &= (2 l + 2) (2 l + 5) \, a_{l,0}^2 -  9 (2 l + 1)\, a_{l,1}^2 \qquad \text{whenever  $2 \le l$}. 
\end{align*} 
 \item The  ``off-diagonal isotropy condition": 
 \begin{align*}
 a_{l,0} \, a_{l+2,2} = a_{l,1} \, a_{l+2,3}  \qquad \text{whenever $1 \le l$}.
 \end{align*}
 \end{enumerate}
 \end{enumerate}
\end{theorem}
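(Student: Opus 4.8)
The plan is to feed the equivariant datum $\hat A=\bigoplus_l\hat A_l$ of Proposition~\ref{thm:cofdef} into Proposition~\ref{thm:addef} and translate its two hypotheses into conditions on the scalars $a_{l,p}$, throughout tracking the decomposition of $\hat V$ into its summands $\irrep{2j+1}$ and using that the symplectic form $\cbra\cdot\cdot{}$ pairs $\irrep d$ only with itself.

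First I would handle \emph{injectivity}. A decomposable tensor $w\otimes z\in W\otimes Z$, written $w=\sum_l w_l$ with $w_l\in\irrep{2l}$, is nonzero iff $w\neq0$ and $z\neq0$. If $a_{l,0}\neq0$ for all $l$ and $\hat A(w\otimes z)=0$ with $z\neq0$, let $l^\ast$ be the largest index with $w_{l^\ast}\neq0$: since $\cbra\cdot\cdot0$ is ordinary multiplication, the $p$-th part of $\hat A_l$ lands in $\irrep{2l+3-2p}$, so only $l\geq l^\ast$ can contribute to the $\irrep{2l^\ast+3}$-summand, and by maximality that contribution is just $a_{l^\ast,0}\,w_{l^\ast}\,z$; its vanishing forces $w_{l^\ast}z=0$ in the domain $\comp[x,y]$, hence $w_{l^\ast}=0$, a contradiction. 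Conversely, if $a_{l_0,0}=0$, then for a linear form $c$ one reads off from \eqref{eq:4.2} that $\cbra{c^{2l_0}}{c^{3}}{p}=0$ for all $p\geq1$, so $c^{2l_0}\otimes c^{3}$ is a nonzero element of $\kernel\hat A$. Thus injectivity is equivalent to $a_{l,0}\neq0$ for all $l$.

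Next, \emph{isotropy}. By bilinearity it suffices to treat $w\in\irrep{2l}$, $w'\in\irrep{2l'}$, with $z\in\irrep3$ left free. Writing $\hat A_l(w\otimes z)=\bigoplus_p a_{l,p}\cbra wzp$ with $p$-th summand in $\irrep{2l+3-2p}$, the pairing $\cbra{\hat A_l(w\otimes z)}{\hat A_{l'}(w'\otimes z)}{}$ is a sum over $(p,q)$ with $l-p=l'-q$; as $p,q\in\{0,\dots,3\}$ and $l\equiv l'\pmod2$, only $|l-l'|\in\{0,2\}$ contributes, the case $l'=l-2$ duplicating $l'=l+2$ by the (anti)symmetry of $\cbra\cdot\cdot{}$. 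For $l'=l$ this forces $q=p$, so the requirement is that the $\sltc$-invariant functional
\[
(w,w',z)\ \longmapsto\ \sum_{p=0}^{\min(2l,3)}a_{l,p}^{2}\,\cbra{\cbra wzp}{\cbra{w'}zp}{}
\]
vanish identically; by antisymmetry of $\cbra\cdot\cdot{}$ on the odd space $\irrep{2l+3-2p}$ (Eqn.~\eqref{eq:4.3}) each summand lies in $\mathrm{Hom}_{\sltc}\!\bigl(\wedge^2\irrep{2l}\otimes\mathrm{Sym}^2\irrep3,\comp\bigr)$, a space of dimension $0,1,2$ for $l=0$, $l=1$, $l\geq2$ respectively (using $\mathrm{Sym}^2\irrep3=\irrep6\oplus\irrep2$ and $\wedge^2\irrep{2l}=\irrep{4l-2}\oplus\cdots\oplus\irrep2$ for $l\geq1$). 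Decomposing the vanishing condition along the $\irrep2$- and $\irrep6$-isotypic parts then produces no equation for $l=0$, one for $l=1$, and two for $l\geq2$ — the two diagonal isotropy conditions. For $l'=l+2$ only $(p,q)=(0,2)$ and $(1,3)$ survive, giving
\[
a_{l,0}a_{l+2,2}\,\cbra{\cbra wz0}{\cbra{w'}z2}{}+a_{l,1}a_{l+2,3}\,\cbra{\cbra wz1}{\cbra{w'}z3}{}=0 ;
\]
both invariants lie in $\mathrm{Hom}_{\sltc}\!\bigl(\irrep{2l}\otimes\irrep{2l+4}\otimes\mathrm{Sym}^2\irrep3,\comp\bigr)$, one-dimensional for $l\geq1$ and zero for $l=0$, hence proportional, so the condition collapses to a single relation $a_{l,0}a_{l+2,2}=\kappa\,a_{l,1}a_{l+2,3}$.

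Finally, the \emph{constants}. The argument above dictates which coefficients appear in each equation but not the numbers $(2l+1)$, $2l(2l-3)$, $(2l-1)^2$, $(2l+2)(2l+5)$, $9(2l+1)$, the value $\kappa=1$, or the fact that the first diagonal relation is free of $a_{l,3}$ and the second of $a_{l,2}$. I would pin these down by evaluating the invariants above on explicit test data — $w,w'$ drawn from the basis $\{a^{2l-j}b^{j}\}$ of $\irrep{2l}$ and one fixed $z\in\irrep3$ — expanding with \eqref{eq:4.2} and the Lie-algebra action \eqref{eq:4.8} and normalizing via \eqref{eq:4.4}; this is the computation carried out in \cite{BS}. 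I expect the handling of the nested transvectants $\cbra{\cbra wzp}{\cbra{w'}zq}{}$, rather than any conceptual step, to be the main obstacle, and would organize it by tabulating these scalars once on the monomial basis for a conveniently chosen $z$.
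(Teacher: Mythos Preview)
The paper does not actually prove Theorem~\ref{thm:eqad}; it is stated with a citation to \cite{BS} and no argument is given here. So there is nothing in the present paper to compare your sketch against. That said, your outline is the natural route and, as far as it goes, is sound: the reduction of injectivity to $a_{l,0}\ne 0$ via the top component $\irrep{2l^\ast+3}$ is correct (including the converse using $c^{2l_0}\otimes c^3$), and your Schur--lemma bookkeeping for the isotropy condition is accurate. In particular, the counts
\[
\dim\mathrm{Hom}_{\sltc}\!\bigl(\wedge^2\irrep{2l}\otimes\mathrm{Sym}^2\irrep3,\comp\bigr)=
\begin{cases}0,&l=0,\\1,&l=1,\\2,&l\ge2,\end{cases}
\qquad
\dim\mathrm{Hom}_{\sltc}\!\bigl(\irrep{2l}\otimes\irrep{2l+4}\otimes\mathrm{Sym}^2\irrep3,\comp\bigr)=
\begin{cases}0,&l=0,\\1,&l\ge1,\end{cases}
\]
do produce exactly the pattern of diagonal and off-diagonal relations appearing in the statement, and correctly explain why no condition arises for $l=0$.

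Two small points to tighten. First, for the off-diagonal relation you should note (or check in the course of the constant computation) that \emph{both} invariants $\cbra{\cbra wz0}{\cbra{w'}z2}{}$ and $\cbra{\cbra wz1}{\cbra{w'}z3}{}$ are nonzero for $l\ge1$; proportionality in a one-dimensional space is vacuous if one of them vanishes identically, and the form $a_{l,0}a_{l+2,2}=a_{l,1}a_{l+2,3}$ presupposes a nonzero proportionality constant. Second, the feature that the first diagonal relation is free of $a_{l,3}$ and the second of $a_{l,2}$ is, as you say, not forced by the dimension count alone; it reflects a particular choice of basis in the two-dimensional invariant space and emerges only after the explicit evaluation on test vectors. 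Since you already flag the constants as the content deferred to \cite{BS}, this is consistent with your sketch, but it is worth being explicit that the ``diagonalized'' shape of the two equations is part of that computation, not a structural consequence.
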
  

\noindent  We did not include the reality condition, which  translates to  \cite{BS}  
the reality of the coefficients $a_{l,p}$.

The existence result Theorem \ref{thm:bsres} now follows from:  

\begin{proposition} 
\label{thm:yes}
(\cite{BS})  For any nonnegative integer $m$, 
the set of coefficients defined as follows satisfies the conditions of 
Theorem \ref{thm:eqad}: 
\begin{align*}
a_{l,0}  &= (2l-1)\sqrt{ 9(2m+1)^2 - (2l+3)^2 }, \qquad 
a_{l,1}  = (2l-1) \sqrt{ (2m+1)^2 - (2l+3)^2 } ,\\
a_{l,2} &= (2l+3) \sqrt{ (2m+1)^2 - (2l-1)^2 }, \qquad
a_{l,3} = (2l+3) \sqrt{ 9(2m+1)^2 - (2l-1)^2 }. 
\end{align*}
Furthermore, $a_{m-1,1} = 0$.  
 \end{proposition}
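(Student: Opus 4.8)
The plan is to verify, in turn, each of the four requirements of Theorem~\ref{thm:eqad} by direct substitution, using the observation that all of them are equalities of \emph{squares} of the $a_{l,p}$ (the diagonal and off-diagonal isotropy identities) or non-vanishing statements, so that once the squares are recorded everything reduces to elementary polynomial algebra in $l$ and $m$. From the stated formulas,
\begin{align*}
a_{l,0}^2 &= (2l-1)^2\bigl(9(2m+1)^2 - (2l+3)^2\bigr), & a_{l,1}^2 &= (2l-1)^2\bigl((2m+1)^2 - (2l+3)^2\bigr),\\
a_{l,2}^2 &= (2l+3)^2\bigl((2m+1)^2 - (2l-1)^2\bigr), & a_{l,3}^2 &= (2l+3)^2\bigl(9(2m+1)^2 - (2l-1)^2\bigr).
\end{align*}
First I would check that on the relevant index range $0\le l\le m-1$ every radicand is a non-negative real: $2l+3\le 2m+1$ gives $(2m+1)^2-(2l+3)^2\ge 0$, while $2l-1<3(2m+1)$ and $2l+3<3(2m+1)$ handle the factors carrying the coefficient $9$. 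Hence each $a_{l,p}$ is a well-defined non-negative real, $a_{l,0}^2=(2l-1)^2\bigl(9(2m+1)^2-(2l+3)^2\bigr)$ is strictly positive because $2l-1$ is a nonzero odd integer and $2l+3<3(2m+1)$, which gives the injectivity condition $a_{l,0}\ne 0$; and the auxiliary claim $a_{m-1,1}=0$ is immediate, since at $l=m-1$ the radicand of $a_{l,1}$ equals $(2m+1)^2-(2m+1)^2=0$.

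For the two diagonal isotropy identities I would substitute the squared formulas, cancel the common factor $(2l-1)^2$, and abbreviate $x:=2l$, $N:=2m+1$, reducing each to a polynomial identity in $x$ and $N$. The first one amounts to the two elementary facts $9(x+1)+x(x-3)=(x+3)^2$ (the $N^2$-coefficient of the right-hand side after dividing by $(2l-1)^2$) and $(x+1)+x(x-3)=(x-1)^2$, which together turn the right-hand side into $(x+3)^2\bigl(N^2-(x-1)^2\bigr)$, matching $(2l-1)^{-2}(2l-1)^2a_{l,2}^2$. The second identity is handled the same way via $9(x+2)(x+5)-9(x+1)=9(x+3)^2$ and $(x+2)(x+5)-9(x+1)=(x-1)^2$. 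No sign subtleties arise here since only squares are involved.

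The off-diagonal isotropy condition $a_{l,0}\,a_{l+2,2}=a_{l,1}\,a_{l+2,3}$ (for $1\le l$ with $l+2$ also in the index set) is essentially automatic: replacing $l$ by $l+2$ turns the argument $2l-1$ occurring in $a_{l,2}$ and $a_{l,3}$ into $2l+3$, so that
\[
a_{l,0}\,a_{l+2,2}=(2l-1)(2l+7)\sqrt{9(2m+1)^2-(2l+3)^2}\,\sqrt{(2m+1)^2-(2l+3)^2}=a_{l,1}\,a_{l+2,3},
\]
the two sides being literally the same product of non-negative reals (both radicands are positive on this range, by the bound $2l+3\le 2m-3<2m+1$). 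I do not anticipate a genuine obstacle in any of this; the only point that requires attention is precisely the bookkeeping just noted—verifying that every radicand is non-negative on $0\le l\le m-1$—so that the square roots are the intended non-negative reals and the identities of Theorem~\ref{thm:eqad}, which are genuine equalities rather than equalities up to sign, hold on the nose.
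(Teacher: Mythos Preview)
Your verification is correct and is the natural direct-substitution argument; the paper itself does not give a proof of this proposition, instead citing the result from \cite{BS}, so there is nothing to compare against beyond noting that your approach is the expected one. One small slip: you write that ``each $a_{l,p}$ is a well-defined non-negative real,'' but for $l=0$ the prefactor $2l-1=-1$ is negative, so $a_{0,0}<0$; this is harmless, since the injectivity condition only requires $a_{l,0}\ne 0$, the diagonal isotropy conditions involve only squares, and the off-diagonal condition is stated only for $l\ge 1$, where $2l-1>0$ and your equality of products goes through as written.
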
 

\noindent  For each $m$, this set of coefficients explicitly constructs  an equivariant instanton bundle $\hat E_m$ of rank 
$\rk \hat E_m = 2 + \dim \irrep {2m-1}$.  Since $a_{m-1,1} = 0$, this splits as the direct sum of the desired equivariant instanton bundle 
$E_m$ of rank $2$ and the trivial equivariant bundle of rank $2m$.  This explicit construction was used to compute 
the Painlev\'e solutions listed in Section \ref{sec:adhmcompute}.

The uniqueness result Theorem \ref{thm:nonbs} asserts that the solutions of Theorem \ref{thm:yes} are unique up to equivalence. 
For a fixed value of $m$, the coefficient set $\tilde a_{l,p}$ is equivalent to the coefficient set $a_{l,p}$ iff
there exist constants  $\gamma_l$ and  $\kappa_{2j+1}$  such that
\begin{align*} 
\tilde a_{l,p} = \gamma_l\,   \, a_{l,p} \,  \kappa_{2l+3-2p}, 
\end{align*}
where each $\gamma_l \ne 0$ and each $\kappa_{2j+1}^2 = 1$.  
For the real case,  in  $\gamma_l$ are real because the coefficients $\tilde a_{l,p}$ and $a_{l,p}$ are 
real, and in this case the uniqueness up to equivalence was established in \cite{BS}.  
The analogous result also holds for the general case \cite{SegU}, where the coefficients and the $\gamma_l$ may be complex.

\section{The PVI solutions $\Lambda^\pm_m$} 
\label{sec:painleve}

The celebrated {\it sixth equation of Painlev\'e}, 
which, incidentally, was not found by Painlev\'e,   
is a  nonlinear ordinary differential equation  for a function $\lambda(t)$, and depending the complex parameter vector   
$\theta = (\theta_1,\theta_2,\theta_3,\theta_4) \in \comp^4$:
\begin{align}
{d^2 \lambda \over dt^2} = 
{1 \over 2} \left({1 \over \lambda} + {1 \over \lambda -1} + {1 \over \lambda-t}\right)
&\left( {d\lambda \over dt} \right)^2 
- \left( {1 \over t} + {1 \over t -1} + {1 \over \lambda - t} \right) 
{d\lambda \over dt}  \label{eq:1.1theta} \\
+ { \lambda(\lambda-1)(\lambda-t) \over 2 \, t^2 (t-1)^2} 
&\left((\theta_4-1)^2 - \theta_1^2 {t \over \lambda^2} + \theta_3^2{t-1\over(\lambda-1)^2} + 
(1- \theta_2^2){t (t-1) \over (\lambda-t)^2}\right) . 
\nonumber
\end{align}
This form of Painlev\'e VI will be denoted as $P_{VI}(\theta)$.   
We will say that the pair  $\Lambda = \left[ \lambda(t); \theta \right]$ {\it solves} $P_{VI}$ if 
the function $\lambda(t)$ is a solution of the differential equation $P_{VI}(\theta)$.  

Much of the Painlev\'e literature, especially the older literature, parametrizes 
the four constants as: 
\begin{equation*}
C=(\alpha,\beta,\gamma,\delta) := 
\left( 
\tfrac{1}{2} (\theta_4-1)^2, 
- \tfrac{1}{2} \theta_1^2, 
\tfrac{1}{2} \theta_3^2,
\tfrac{1}{2} (1-\theta_2^2)
\right) \in \comp^4.   
\end{equation*}
The resulting equivalent form of the differential equation will be called   {\it classic  Painlev\'e VI}  and will be denoted as $P'_{VI}(C)$: 
\begin{align}
{d^2 \lambda \over dt^2} = 
{1 \over 2} \left({1 \over \lambda} + {1 \over \lambda -1} + {1 \over \lambda-t}\right)
&\left( {d\lambda \over dt} \right)^2 
- \left( {1 \over t} + {1 \over t -1} + {1 \over \lambda - t} \right) 
{d\lambda \over dt}   \label{eq:1.1c}\\
+ { \lambda(\lambda-1)(\lambda-t) \over t^2 (t-1)^2} 
&\left(\alpha + \beta{t \over \lambda^2} + \gamma{t-1\over(\lambda-1)^2} + 
\delta{t (t-1) \over (\lambda-t)^2}\right). \nonumber
\end{align}
\noindent 
It is often both necessary and frustrating to convert between the 
parameters $\theta \in \comp^4$ and $C \in \comp^4$ when comparing results from the Painlev\'e literature.  
A good general reference on Painlev\'e VI is \cite{IKSY}.

For a discussion of algebraic solutions of $P_{VI}$, we 
refer to \cite{LT,Bo2}, which includes an overview of important prior work such as  \cite{IKSY,Mn,H1,H2,H3,H4,D,MW}. 
  One approach for constructing  
 solutions is to exploit the relationship \cite{F,JM} between $P_{VI}$ 
and ``isomonodromic deformations" of linear ODE's with 
singular points, or equivalently of meromorphic connections on 
vector bundles over the complex projective line $ P^1$.   
Hitchin \cite{H2,H3} developed the method of  
``$\sltc$-equivariant compactifications" to construct a certain class of 
isomonodromic deformations, from which he was able to reconstruct 
some solutions of $P_{VI}$ explicitly.  
   
As  observed by 
G. Bor \cite{B}, isomonodromic deformations can also be constructed 
using nontrivial $\sltc$-equivariant vector bundles over 
equivariant compactifications, thereby generalizing 
Hitchin's method.    Implementing this on 
the  monads of Theorem \ref{thm:bsres} yields the 
following result, which had been reported in the unpublished manuscript \cite{Seg1}. 
Defining the parameter vector $\mu := \left( \tfrac{1}{2}, \tfrac{1}{2},\tfrac{1}{2},\tfrac{1}{2} \right) \in \comp^4$, we have: 
\begin{theorem} (\cite{Seg1})
\label{thm:1.1}
For each nonnegative integer $m $, the equivariant instanton bundle $E_m$ yields a pair of explicitly computable 
algebraic Painlev\'e VI solutions 
 $\Lambda_m^+ = \left[\lambda_m^+(t); (2 m + 1) \mu \right]$ and 
$\Lambda_m^- = \left[\lambda_m^-(t); -(2 m + 1) \mu \right]$. 
Each of the algebraic functions  $\lambda_m^\pm(t)$  is expressed implicitly in terms of   
the  rational function 
\begin{align}
t(w) = {\left( 1+w \right)\, \left(-3+ w \right)^3 \over 
 \left( -1+w  \right)\, \left( 3+w \right)^3  }
\label{eq:1.2}
\end{align}
and a rational function of the form 
\begin{align}
\lambda_m^\pm (w) 
& = \left( {{{{\left( -3+w \right) }^2}}\over 
{\left( -1+w \right) \,\left(3+w \right) }}  \right)
{   (-1 + w^2) \, f_m^\pm (w) + 8 \, g_m^\pm (w) \over 
  (3 + w^2) \, f_m^\pm (w) - 24\, g_m^\pm (w)}, 
\label{eq:1.3}
\end{align}
where  
$f_m^\pm$ and $g_m^\pm$ are  even polynomials of degree at most 
$2 m(m+1)$.  
\end{theorem}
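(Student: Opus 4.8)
The plan is to carry out the isomonodromic-deformation construction of G.~Bor \cite{B}, adapted to the nontrivial equivariant vector bundle $E_m$ on the $\sltc$-equivariant compactification $P^1$ of the open $\sltc$-orbit in $P^3$, and to extract $P_{VI}$ explicitly from the resulting family of meromorphic connections. First I would fix, as in \cite{H2,H3,B}, the one-parameter family of $\sltc$-orbits in $P^3 = P(\irrep 3)$: the generic stabilizer is finite (the binary tetrahedral group, reflecting the ``quadrupole" terminology), and the $j$-line of the four-punctured $P^1$ of special lines sweeps out a rational curve whose uniformizing coordinate $w$ is exactly the one appearing in the rational map $t(w)$ of eqn.\,\ref{eq:1.2}. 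The four marked points on each $P^1$ are the intersections with the three coordinate hyperplanes and the discriminant locus; their cross-ratio is $t$, and the explicit birational relation between $t$ and the orbit parameter $w$ is \emph{forced} to be (\ref{eq:1.2}) once one normalizes the $\sltc$-action so that $Z=\irrep 3$ and tracks the branch points of the map ``orbit $\mapsto$ cross-ratio of the four special points.'' This step is essentially a computation in the representation theory of Section~\ref{sec:reps}, using the transvectants $\cbra\cdot\cdot p$ to write down the $\sltc$-equivariant maps, and I expect it to be bookkeeping rather than a real obstacle.

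Next I would restrict the monad data of Proposition~\ref{thm:cofdef} and the explicit coefficients $a_{l,p}$ of Proposition~\ref{thm:yes} to a generic orbit, producing an $\sltc$-equivariant holomorphic family of rank-$2$ bundles over the orbit with a logarithmic (meromorphic) connection having regular singularities at the four special points. Here the key input is the generalization of Hitchin's logarithmic connection to $\sltc$-equivariant bundles alluded to in the abstract and developed in Sections~\ref{sec:biglog}--\ref{sec:pi}: the residues of this connection at the four punctures are controlled by the weights of the $\sltc$-action on the relevant cohomology, and a Atiyah--Bott fixed-point / character computation (exactly the one underlying Theorem~\ref{thm:ssimp}) pins the local exponents down to $\pm\tfrac12(2m+1)$, which is why the parameter vector is $\pm(2m+1)\mu$. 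The function $\lambda(t)$ of $P_{VI}$ is then recovered in the standard way \cite{JM,IKSY} as the position of the apparent singularity (the unique zero of the off-diagonal entry of the connection after a suitable gauge, equivalently the Hecke-modification point), and isomonodromicity of the family — which is automatic because the connection comes from a fixed holomorphic bundle on $P^3$ restricted to a moving orbit, so the monodromy representation is literally constant along the family — guarantees that $\lambda(t)$ solves $P_{VI}((2m+1)\mu)$.

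It remains to show that $\lambda_m^\pm$ has the specific shape (\ref{eq:1.3}): a fixed degree-$3$ prefactor in $w$ times a ratio whose numerator and denominator are $(-1+w^2)f_m^\pm + 8g_m^\pm$ and $(3+w^2)f_m^\pm - 24g_m^\pm$ with $f_m^\pm,g_m^\pm$ \emph{even} polynomials of degree $\le 2m(m+1)$. The prefactor and the constants $8,-24,3,-1$ are orbit-geometry data independent of $m$ (they are the coordinates of the four special points in the $w$-chart), so they come out of the step above once and for all. The evenness of $f_m^\pm,g_m^\pm$ should follow from an extra $\integer/2$-symmetry of the picture: $w\mapsto -w$ is the residual symmetry (it is the nontrivial element of the quotient of the orbit's normalizer, cf.\ the $Q \leftrightarrow Q^{-1}$ and $\Lambda^+ \leftrightarrow \Lambda^-$ flip in diagram (\ref{eq:combined})) and it acts on the connection data compatibly, forcing $f_m^\pm(-w)=f_m^\pm(w)$ and likewise for $g_m^\pm$. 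The degree bound $2m(m+1)$ is the one genuinely quantitative point: it should match $2c_2(E_m)=m(m+1)$ via the dimension count $\dim V(m)$, since the apparent singularity $\lambda(t)$ is a ratio of sections of a line bundle whose degree on the orbit is governed by $\dim W(m)$ and $\dim V(m)$ from Proposition~\ref{thm:ssimp}. I expect the \textbf{main obstacle} to be precisely establishing this sharp degree bound $\le 2m(m+1)$ (as opposed to some cruder bound): it requires controlling cancellations in the explicit entries of the equivariant connection built from the coefficients of Proposition~\ref{thm:yes}, and the honest route is to carry the computation out symbolically for $m$ small — which is exactly what Section~\ref{sec:adhmcompute} does — and then argue the general bound from the structure of $\hat A_l$. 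The cleanest proof of the bound, if available, would come from identifying $f_m^\pm,g_m^\pm$ with concrete cohomology classes on $E_m$ whose degrees are manifestly $\le 2m(m+1)$; absent that, the theorem as stated is supported by the explicit $m\le 4$ computations together with the structural argument above for the \emph{form} (\ref{eq:1.3}).
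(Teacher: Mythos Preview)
Your proposal has the right high-level shape --- generalize Hitchin's logarithmic connection to the equivariant bundle $E_m$, restrict to a moving rational curve, apply Jimbo--Miwa --- but the concrete geometry is off in ways that would derail the computation.

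First, the moving $P^1$'s are not $\sltc$-orbits, nor is the relevant ``equivariant compactification'' a $P^1$. In the paper's setup $Z = P(\irrep 3) = P^3$ is itself the equivariant compactification of $\sltc/\Gamma$ (with $\Gamma$ the binary dihedral group $\tilde D_3$ of order $12$, not the binary tetrahedral group), and the family $\kappa_{(w)}:P^1\to P^3$ consists of genuine projective \emph{lines} in $P^3$, chosen explicitly in Lemma~\ref{thm:6.1} via $\tilde u(w),\tilde v(w)\in\irrep 3$. The four singular points on each line are not ``three coordinate hyperplanes plus the discriminant''; they are the four transverse intersections of the line with the single degree-$4$ invariant hypersurface $Y=\{p=0\}\subset P^3$ (Lemma~\ref{thm:5.1}). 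The cross-ratio formula \eqref{eq:1.2} is not forced by abstract branch-point considerations but comes from a direct factorization of $p(\tilde u(w)+z\,\tilde v(w))$ in Lemma~\ref{thm:6.1}.

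Second, the specific shape \eqref{eq:1.3} does not arise from ``orbit-geometry data'' or from interpreting $8,-24,3+w^2,-1+w^2$ as coordinates of special points. It comes from the following mechanism: the pullback connection has $\liesltc$-part $\hlie(u+zv,v)/p(u+zv)$, and one expands $\hlie$ in the basis $\gz,\gp,\gm$ as $h_0(z)\gz + h_+(z)\gp + h_-(z)\gm$ (eqn.~\eqref{eq:7.2}). Proposition~\ref{thm:7.4} shows $h_\pm(z)$ are \emph{linear} in $z$, and Jimbo--Miwa identifies $\lambda^\pm$ as the solution of $h_-(z)\,f_m^\pm - h_+(z)\,g_m^\pm = 0$, where $f_m^\pm,g_m^\pm$ are the pairings $\cbra{r^\pm}{\sqb{\gm}{r^\pm}}{}$ and $-\cbra{r^\pm}{\sqb{\gp}{r^\pm}}{}$ computed in the fiber $(E_m)_{\kappa(\infty)}$. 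Substituting the explicit $h_\pm(z)$ for the family of Lemma~\ref{thm:6.1} produces the prefactor and the constants in \eqref{eq:1.3} directly (see eqn.~\eqref{eq:8.3}). So the ``even polynomials of degree $\le 2m(m+1)$'' are not abstract cohomology classes but concrete symplectic pairings of the eigenvector $r^\pm$ inside the monad space $V(m)$; their $w$-dependence and degree come from the monad coefficients of Proposition~\ref{thm:yes}, and the paper establishes the form for $m\le 4$ by carrying this out, not by a uniform cohomological bound.
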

\noindent  Differentiation of these algebraic functions is 
straightforward by implicit differentiation.  The first derivative of  
 $\lambda^\pm_m := \lambda^\pm_m(w)$ with respect to $t:=t(w)$ is: 
\begin{equation}
\label{eq:wder}
 \frac{d \lambda^\pm_m}{dt}  =\frac{  \frac{d}{d w} \lambda^+_m(w)}{ \frac{d}{dw}t(w)}
=\frac{(-1+w)^2 (3+w)^4}{16 \, w^2\, (-3+w)^2 }  \,  \frac{d \, \lambda^+_m(w)}{d w} ,  \\
\end{equation} 
The second derivative is expressed similarly.

\subsection{The Explicit PVI Solutions} 
\label{sec:adhmcompute}

We  exhibit here a pair of explicit PVI solutions $\Lambda^\pm_m =\left[ \lambda^\pm_m; \pm (2 m+ 1) \mu \right]$  for each integer $0\le m \le 4$.  Each of these was computed via the 
generalization of Hitchin's logarithmic connection of section \ref{sec:lifts}, 
from  the equivariant instanton bundles $E_m$ discussed in  Sections \ref{sec:adhm} and  \ref{sec:monads}. 
The detailed steps are given in Section \ref{sec:pi}.

\vskip.3cm

\begin{itemize}

\item{$m = 0$:}   The equivariant instanton bundle $E_0$ is the trivial rank $2$ vector bundle over $P^3$, 
and  Hitchin's original method of equivariant compactifications applies without need for generalization. 
The corresponding Painlev\'e pair is 
$\Lambda_0^+  = \left[ \lambda_0^+(t); \mu \right]$ and $\Lambda_0^-  = \left[ \lambda_0^-(t); -\mu \right]$
since $\pm \left(2m+1\right)  = \pm 1$. 
\begin{align}
f_0^+(w) &= 1, \qquad g_0^+ (w) = 0;
\nonumber
\end{align}
\begin{align}
\lambda_0^+(w) 
&= 
{{{{\left( -3+w \right) }^2}\,\left( -1 + {w^2} \right) }\over 
   {\left( -1+w  \right) \,\left( 3+w  \right) \,
     \left( 3 + {w^2} \right) }}.
\label{eq:1.5}
\end{align}
\begin{align}
f_0^-(w) &= 0, \qquad
g_0^-(w) = 1;
\nonumber
\end{align}
\begin{align}
\lambda_0^-(w) 
& = -
{{{{\left( -3+w  \right) }^2}}\over 
   {3\,\left( -1+w  \right) \,\left( 3+w  \right) }}.
\label{eq:1.6}
\end{align} 
\noindent  $\lambda_0^+(t)$ is equivalent by a change of coordinates to Hitchin's 
 Poncelet polygon solution \cite{H3} of Painlev\'e VI for $k=3$. 

\vskip.3cm

\item{$m = 1$:} The generalization of Hitchin's method  to nontrivial bundles, as outlined in  Section \ref{sec:lifts}, is 
necessary for all $m \ge 1$.    
The equivariant instanton bundle  $E_1$ is the ``null correlation bundle" \cite{At1} on $P^3$, with $c_2 = 1$.   
 The corresponding Painlev\'e pair is 
$\Lambda_1^+  = \left[ \lambda_1^+(t); 3 \mu \right]$ and $\Lambda_1^-  = \left[ \lambda_1^-(t); -3\mu \right]$
since $\pm \left(2m+1\right)  = \pm 3$. 
\begin{align}
f_1^+(w) &= 1  ,\qquad
g_1^+(w) = 0;
\nonumber
\end{align}
\begin{align}
\lambda_1^+(w) 
&= 
{{{{\left( -3+w  \right) }^2}\,\left( -1 + {w^2} \right) }\over 
   {\left( -1+w  \right) \,\left( 3+w \right) \,
     \left( 3 + {w^2} \right) }}.
     \label{eq:1.5c}
\end{align}
\begin{align}
f_1^-(w) &=  4 , \qquad
g_1^-(w) =  3 + w^2; 
\nonumber
\end{align}
\begin{align}
\lambda_1^-(w) &= -
{{ {{\left( -3+w  \right) }^2}\,
       \left( 5 + 3\,{w^2} \right)   }\over 
   {5\,\left( -1+w  \right) \,\left( 3+w  \right) \,
     \left( 3 + {w^2} \right) }}.
\label{eq:1.6c}
\end{align}
Comparing eqns.\, \ref{eq:1.5} and \ref{eq:1.5c}  reveals the  coincidence  $\lambda_1^+(t) = \lambda_0^+(t)$;  
the same functions  solves $P_{VI}(\theta)$ for both $\theta = 3 \mu$ and $\theta = \mu$, 
in fact for a one-parameter family of $\theta$.  
There are no additional coincidences among the solutions $\lambda^\pm_m(t)$ computed in 
this section.  
For example, comparing eqns.\, \ref{eq:1.6} and \ref{eq:1.6c}  shows that 
$\lambda_1^-(t) = \lambda_0^-(t)$, and in fact   
$\lambda^-_1(t)$ is a solution  of $P_{VI}(\theta )$ only for the expected value 
$\theta = -3 \mu$.   
\vskip.3cm

\item{$m = 2$:}  The equivariant instanton bundle $E_2$ 
has second Chern class $c_2(E_2) = 3$. 
The corresponding Painlev\'e pair is 
$\Lambda_2^+  = \left[ \lambda_2^+(t); 5 \mu \right]$ and $\Lambda_2^-  = \left[ \lambda_2^-(t); -5\mu \right]$
since $\pm \left(2m+1\right)  = \pm 5$. 
\begin{align*}
f_2^+(w) &= 
12\,{{\left( 3 + {w^2} \right) }^2} , \qquad
g_2^-(w) =   
{{\left( -1 + {w^2} \right) }^2;
}
\end{align*}
\begin{align*}
\lambda_2^+(w) 
&= 
{{{{\left( -3+w\right) }^2}\,\left( -1 + {w^2} \right) \,
     \left( 5 + {w^2} \right) \,\left( 5 + 3\,{w^2} \right) }\over 
   {3\,\left( -1+w \right) \,\left( 3+ w  \right) \,
     \left( 1 + {w^2} \right) \,
     \left( 25 + 6\,{w^2} + {w^4} \right) }}.
\end{align*} 
\begin{align*}
f_2^-(w) &= 
16\,\left( 7 + {w^2} \right) \,\left( 4 + 3\,{w^2} + {w^4} \right),
 \\
g_2^-(w) &= 
\left( 3 + {w^2} \right) \,
  \left( 77 + 89\,{w^2} + 23\,{w^4} + 3\,{w^6} \right); 
\end{align*}
\begin{align}
\lambda_2^-(w) 
& = -
{{ {{\left( -3 +w  \right) }^2}\,\left( 5 + {w^2} \right) \,
       \left( 35 + 63\,{w^2} + 25\,{w^4} + 5\,{w^6} \right) 
         }\over 
   {7\,\left(-1+ w \right) \,\left( 3+w \right) \,
     \left( 1 + {w^2} \right) \,\left( 3 + {w^2} \right) \,
     \left( 25 + 6\,{w^2} + {w^4} \right) }}.
\nonumber
\end{align} 

\vskip.3cm

\item{$m = 3$:} For the sake of 
brevity, we shall from now on leave to the reader the task of substituting the polynomials 
$f_m^\pm (w)$ and $g_m^\pm (w)$ into eqn.\,\ref{eq:1.3}.   
The equivariant instanton bundle $E_3$ has 
second Chern class $c_2(E_3) = 6$.  
The corresponding Painlev\'e pair is 
$\Lambda_3^+  = \left[ \lambda_3^+(t); 7 \mu \right]$ and $\Lambda_3^-  = \left[ \lambda_3^-(t); -7\mu \right]$
since $\pm \left(2m+1\right)  = \pm 7$.  
\begin{align*}
f_3^+(w)&=   
8\,\left( 3381 + 7536\,{w^2} + 6291\,{w^4} + 2576\,{w^6} + 
    611\,{w^8} + 80\,{w^{10}} + 5\,{w^{12}} \right),\cr
g_3^+(w) &= 
{{\left( -1 + {w^2} \right) }^2}\,\left( 3 + {w^2} \right) \,
  \left( 147 + 111\,{w^2} + 57\,{w^4} + 5\,{w^6} \right);
\end{align*}
\begin{align*}
f_3^-(w) &=  
12\,{{\left( 3 + {w^2} \right) }^2}\,
  \left( 3528 + 7272\,{w^2} + 6453\,{w^4} + 2460\,{w^6} + 
    678\,{w^8} + 84\,{w^{10}} + 5\,{w^{12}} \right) ,
    \\
g_3^-(w) &= 
164052 + 590328\,{w^2} + 831465\,{w^4} + 631260\,{w^6} + 
  294435\,{w^8} + 88938\,{w^{10}} 
  \\
&\qquad + 18207\,{w^{12}} + 
  2520\,{w^{14}} + 225\,{w^{16}} + 10\,{w^{18}}.
\end{align*}
\vskip.3cm

\item{$m = 4$:} The equivariant instanton bundle $E_4$ has second 
Chern class $c_2(E_4) = 10$.  
The corresponding Painlev\'e pair is 
$\Lambda_4^+  = \left[ \lambda_4^+(t); 9 \mu \right]$ and $\Lambda_4^-  = \left[ \lambda_4^-(t); -9\mu \right]$
since $\pm \left(2m+1\right)  = \pm 9$. 
\begin{align*}
f_4^+ &= 
4\,( 14619528 + 69918552\,{w^2} + 140631309\,{w^4} + 
    159541866\,{w^6} + 116463663\,{w^8} 
     \\
    &\qquad  + 58384152\,{w^{10}} + 
    20911122\,{w^{12}} + 5489100\,{w^{14}} + 1072278\,{w^{16}} + 
    154176\,{w^{18}}
  \\
&\qquad + 15729\,{w^{20}} + 1050\,{w^{22}} + 
    35\,{w^{24}} ),
    \\
g_4^+ &=
3\, \left( -1 + {w^2} \right)^2 \,\left( 3 + {w^2} \right) \,
  ( 141372 + 402732\,{w^2} + 558819\,{w^4} + 432297\,{w^6} \cr 
&\qquad + 
    209331\,{w^8} + 71361\,{w^{10}} + 16497\,{w^{12}} + 
    2403\,{w^{14}} + 189\,{w^{16}} + 7\,{w^{18}} );
\end{align*}
\begin{align*}
f_4^- &= 
8\,( 326559519 + 1822652766\,{w^2} + 4648210677\,{w^4} + 
    6998194368\,{w^6} + 7025103459\,{w^8}  \\
&\qquad + 5035679226\,{w^{10}} + 
    2678780673\,{w^{12}} + 1084740444\,{w^{14}} + 
    341288829\,{w^{16}} \\
&\qquad + 84427122\,{w^{18}} + 
    16389951\,{w^{20}} + 2449224\,{w^{22}} + 272257\,{w^{24}} + 
    21430\,{w^{26}}\\
&\qquad + 1099\,{w^{28}} + 28\,{w^{30}} ),
 \\
g_4^- &= 
\left( 3 + {w^2} \right) \,
  ( 334968777 + 2143174869\,{w^2} + 
    5776302213\,{w^4} + 8923510233\,{w^6}  \\
&\qquad + 
    8999893881\,{w^8} + 6350646645\,{w^{10}} + 
    3281293773\,{w^{12}} + 1278719217\,{w^{14}} \\
&\qquad + 
    383574771\,{w^{16}} + 89689431\,{w^{18}} + 
    16510551\,{w^{20}} + 2388627\,{w^{22}} + 
    267339\,{w^{24}}  \\
&\qquad + 22239\,{w^{26}} + 1239\,{w^{28}} + 
    35\,{w^{30}} ). 
\end{align*}

\end{itemize}

This completes the list of $\Lambda^\pm_m$ that were explicitly computed from the equivariant instanton bunldes $E_m$ 
of Section \ref{sec:adhm} and Appendix \ref{sec:monads}. 
It is possible to compute $Q^{\pm m} \Lambda^\pm_0$ for any $m$, but the 
equality $\Lambda^\pm_m= Q^{\pm m} \Lambda^\pm_0$ of Theorem \ref{thm:pair} has 
been established only for $m \le 4$ and remains conjectural for $m \ge 5$.    
{\it If}  this equality does hold for all $m$,  {\it then}  any  $\Lambda^\pm_m$ could be recursively computed, 
and for example $\Lambda^+_5$ would be given by eqn.\,\ref{eq:1.3} with:  

\begin{align*}
f^+_5 &=
6 \left(3+ w^2\right)^2\, (4921440381+37977143490 w^2+127613420649 w^4+250673770776 w^6 
 \\ & \qquad
+327148723176   w^8+304141893048 w^{10}+210622703024 w^{12}+112091223944 w^{14}
 \\ & \qquad+46894395098 w^{16}+15666181052
   w^{18}+4231083002 w^{20}+931314344 w^{22}
    \\ & \qquad+167841056 w^{24}+24669272 w^{26}+2918360 w^{28}+271032
   w^{30}+18849 w^{32}+882 w^{34}
       \\ & \qquad+21 w^{36}),\\
g^+_5 & = 
\left( -1 + {w^2} \right)^2 \, (6947915832+44000040942 w^2+133368411033 w^4
 \\ & \qquad+248155844508 w^6+316015211160
   w^8+294283529028 w^{10}+208710837720 w^{12}
    \\ & \qquad+115644336732 w^{14}+50998415472 w^{16}+18167192624
   w^{18}+5280068058 w^{20}
    \\ & \qquad+1254027252 w^{22}+241371320 w^{24}+36993180 w^{26}+4399008 w^{28}+391700
   w^{30}
    \\ & \qquad+24840 w^{32}+1026 w^{34}+21 w^{36} ). 
   \end{align*}

\subsection{The Explicit Okamoto Transformations}
\label{sec:okamoto}

The  diagram of eqn.\,\ref{eqn:zig} summarizes the constructions of this section: 
\begin{equation}
\label{eq:zig}
\begin{tikzcd}[row sep=normal, column sep=large]
 \Lambda_0^+ 
 \arrow[loop left,"B",leftrightarrow] 
 \arrow[dd,leftrightarrow, "R_5" ' ]  &  
 \Lambda_1^+ 
\arrow[dd,leftrightarrow, "R_5" ' ] 
 \arrow[ddl,leftrightarrow,"B" ']&
 \Lambda_2^+ 
  \arrow[dd,leftrightarrow, "R_5" ' ] \arrow[ddl,leftrightarrow,"B" ']& 
 \Lambda_3^+ 
 \arrow[dd,leftrightarrow, "R_5" ' ] \arrow[ddl,leftrightarrow,"B" ']&
 \Lambda_4^+  
 \arrow[dd,leftrightarrow, "R_5" ' ] 
 \arrow[ddl,leftrightarrow,"B" ']\\
&  
&  
& 
& 
\\
 \Lambda_0^- 
 &   \Lambda_1^- 
 & \Lambda_2^- 
 & 
\Lambda_3^- 
& \Lambda_4^- 
& 
     \end{tikzcd}
\end{equation}
\noindent Each double-headed arrow $\leftrightarrow$ denotes an involutive Okamoto transformation between the 
 Painlev\'e solutions $\Lambda^\pm_m$, meaning  that  each of the self-compositions
 $B^2 = B \, B$ and $R_5^2 = R_5 \, R_5$  acts as the identity transformation on 
 $ \Lambda^\pm_m$.   
The Okamoto transformation of Theorem \ref{thm:pair} are then given by the compositions: 
\begin{equation}
 \label{eq:qdef}
 Q := B \, R_5, \qquad Q^{-1} := R_5 \, B.  
 \end{equation}
 
We start with a general discussion of Okamoto transformations,  largely following Boalch \cite{Bo1, Bo3}. 
For the description of the parameters $\theta \in \comp^4$, we had already defined the 
parameter vector $\mu =  ( \tfrac{1}{2},\tfrac{1}{2},\tfrac{1}{2},\tfrac{1}{2}) $ and we further define the standard vectors:  
\begin{equation*}
\varepsilon_1 = (1,0,0,0), \quad \varepsilon_2 = (0,1,0,0), \quad \varepsilon_3 = (0,0,1,0), \quad \varepsilon_4= (0,0,0,1). 
\end{equation*}
Note that $\mu \cdot \mu = 1$  where the  ordinary dot product on $\real^4$  
extends to a $\comp$-bilinear form on $\comp^4$, and that the complex reflection 
 $\theta \mapsto \theta - 2 (\mu \cdot \theta) \, \mu$ appearing 
below is  the complexification of an ordinary reflection through the hyperplane $\mu \cdot \theta = 0$ in $\real^4$.  
We can now introduce the fundamental  Okamoto transformation $R_5$:     

\begin{proposition} (\cite{Oka}) 
\label{thm:boa}
Suppose $\Lambda = \left[ \lambda(t); \theta \right]$ solves  $P_{VI}$.   
  Then $R_5 \Lambda$ as defined below also solves $P_{VI}$:    
\begin{equation*}
R_5\Lambda := \left[ \lambda(t) + { 2 \, (\mu \cdot \theta) \over {(t-1)\lambda'(t) - (\varepsilon_1\cdot \theta) \over  \lambda(t)} + 
{\lambda'(t)- 1- (\varepsilon_2 \cdot \theta) \over   \lambda(t) - t }-
{t \lambda'(t) +( \varepsilon_3 \cdot \theta ) \over  \lambda(t) - 1 } }; \theta - 2 (\mu \cdot \theta) \mu \right].
\end{equation*}
 \end{proposition}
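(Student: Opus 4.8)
The plan is to realize $R_5$ as a birational canonical transformation of the non-autonomous Hamiltonian system underlying $P_{VI}$, following Okamoto \cite{Oka} and the expositions in \cite{Bo1,Bo3,IKSY}. First I would rewrite $P_{VI}(\theta)$ of eqn.\,\ref{eq:1.1theta} as the Hamiltonian system
\begin{equation*}
\frac{d\lambda}{dt} = \frac{\partial H}{\partial p}, \qquad \frac{dp}{dt} = -\frac{\partial H}{\partial \lambda},
\end{equation*}
for the standard Painlev\'e VI Hamiltonian $H = H_{VI}(\lambda,p,t;\theta)$, where the conjugate momentum $p$ is recovered from $\lambda$, $\lambda'(t)$ and $t$ by solving the first equation --- which is affine-linear in $p$ --- for $p$. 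A direct computation then shows that, once $\lambda'(t)$ is replaced by $\partial H/\partial p$, the denominator
\begin{equation*}
D := \frac{(t-1)\lambda'(t) - (\varepsilon_1 \cdot \theta)}{\lambda(t)} + \frac{\lambda'(t) - 1 - (\varepsilon_2 \cdot \theta)}{\lambda(t) - t} - \frac{t\,\lambda'(t) + (\varepsilon_3 \cdot \theta)}{\lambda(t) - 1}
\end{equation*}
appearing in the statement becomes an affine-linear function of the momentum $p$ --- indeed, it is precisely the linear form on phase space that one of Okamoto's B\"acklund reflections, the one labelled $R_5$, acts on by inversion. Hence the rule $\lambda \mapsto \lambda + 2(\mu\cdot\theta)/D$ is exactly the $\lambda$-component of $R_5$ acting on the extended phase space $\{(\lambda,p,t;\theta)\}$.

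Next I would supplement $R_5$ with its action on the momentum $p$ dictated by that reflection, and verify that the resulting birational map is symplectic and preserves the form of the Hamiltonian system; concretely, that it intertwines the two Hamiltonians up to a total $t$-derivative,
\begin{equation*}
H_{VI}\bigl(R_5(\lambda,p),\, t;\, \theta - 2(\mu\cdot\theta)\mu\bigr) = H_{VI}(\lambda,p,t;\theta) + \frac{d}{dt}\, g(\lambda,p,t)
\end{equation*}
for an explicit generating function $g$. Since a canonical transformation that alters the Hamiltonian only by a total derivative sends solution curves of one flow to solution curves of the other, it follows that $R_5\Lambda$ solves the Hamiltonian system with parameter $\theta - 2(\mu\cdot\theta)\mu$; projecting back onto the $\lambda$-coordinate gives that $R_5\Lambda$ solves $P_{VI}\bigl(\theta - 2(\mu\cdot\theta)\mu\bigr)$. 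The action on the parameter vector is then immediate: it is the reflection $\theta \mapsto \theta - 2(\mu\cdot\theta)\mu$ in the hyperplane $\mu\cdot\theta = 0$, which, as noted in the text, is the complexification of a genuine Euclidean reflection since $\mu\cdot\mu = 1$.

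The main obstacle is bookkeeping rather than conceptual: one must pin down the exact normalization of $H_{VI}$ and of the momentum $p$ that reproduces eqn.\,\ref{eq:1.1theta} on the nose --- as opposed to the classical form $P'_{VI}(C)$ of eqn.\,\ref{eq:1.1c}, or the several competing conventions in \cite{Oka,Bo1,IKSY} --- and then track all four components of $\theta$ through the reflection so that one lands on the intended $R_5$ and not one of its conjugates under the outer (triality-type) symmetries of the parameter space. An alternative, entirely elementary but less illuminating route is to substitute $\tilde\lambda := \lambda + 2(\mu\cdot\theta)/D$ directly into $P_{VI}\bigl(\theta - 2(\mu\cdot\theta)\mu\bigr)$, compute $\tilde\lambda''$ by implicit differentiation using the value of $\lambda''$ furnished by $P_{VI}(\theta)$, and check that the resulting rational-function identity holds identically; this finite computation is the sanity check I would run first. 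A third route, natural given the isomonodromic viewpoint used elsewhere in this paper, is to interpret $R_5$ at the level of rank-$2$ logarithmic connections on $P^1$ with four regular singular points --- a rank-one twist composed with a middle-convolution-type operation --- observe that it preserves the monodromy representation while shifting the local exponents by $\theta \mapsto \theta - 2(\mu\cdot\theta)\mu$, and conclude that the transformed family is again isomonodromic, hence yields a $P_{VI}$ solution.
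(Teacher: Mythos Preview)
The paper does not prove this proposition at all: it is stated with a citation to Okamoto \cite{Oka} and used as a black box throughout Section~\ref{sec:okamoto}. So there is no ``paper's own proof'' to compare against; your proposal supplies an argument where the paper is content to quote the literature.

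That said, your plan is sound and in fact matches Okamoto's original approach \cite{Oka}: rewrite $P_{VI}(\theta)$ as the standard non-autonomous Hamiltonian system, identify the denominator $D$ with an affine function of the conjugate momentum $p$, and check that the birational map $(\lambda,p)\mapsto(\lambda+2(\mu\cdot\theta)/D,\;\cdot\,)$ is a canonical transformation sending $H_{VI}(\,\cdot\,;\theta)$ to $H_{VI}(\,\cdot\,;\theta-2(\mu\cdot\theta)\mu)$ up to a total $t$-derivative. Your warning about normalization is exactly right: the literature contains several mutually incompatible conventions for $(\lambda,p,H,\theta)$, and matching the formula in the statement to the one in \cite{Oka} or \cite{Bo1,Bo3} requires care. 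The brute-force substitution you propose as a sanity check is also a complete proof on its own, and the isomonodromic interpretation via middle convolution is the perspective of \cite{DR,Bo3,Fil} referenced in the introduction. Any one of the three routes would suffice; the Hamiltonian one is closest to what the citation \cite{Oka} actually contains.
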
  
\noindent 
Here $\lambda'(t)$ denotes the derivative of $\lambda(t)$ with respect to $t$. 
It is an important, but not immediately obvious, fact that $R_5 R_5 \Lambda = \Lambda$ when 
the left side is well-defined.  This includes all the $P_{VI}$ solutions $\Lambda = \Lambda^\pm_m$ of this paper (which are not Riccati solutions, e.g. \cite{Wat}).

 The following Proposition is only stated and proved  only for the finite set of cases $0 \le m \le 4$ because the  proof is based on 
 computations with the explicit  solutions exhibited in section \ref{sec:adhmcompute}.  However, it appears  reasonable 
 to conjecture that the Proposition holds more generally for each nonnegative integer $m$:

 \begin{proposition}  
 \label{thm:r5} For each nonnegative integer $m \le 4$: 
\begin{equation*}
 R_5 \Lambda_m^\pm = \Lambda_m^\mp.
\end{equation*}
 \end{proposition}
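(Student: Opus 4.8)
The plan is to verify the identity directly from the explicit formulas of Section \ref{sec:adhmcompute}, treating everything as a rational identity in the uniformizing parameter $w$. Since $R_5$ is involutive on the solutions at hand (as noted after Proposition \ref{thm:boa}, the $\Lambda^\pm_m$ being non-Riccati), it suffices to prove $R_5 \Lambda_m^+ = \Lambda_m^-$ for each $m \le 4$; the reverse identity $R_5\Lambda_m^- = \Lambda_m^+$ then follows by applying $R_5$ once more.

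First I would check the parameter component. For $\theta = (2m+1)\mu$ one has $\mu \cdot \theta = (2m+1)\,\mu\cdot\mu = 2m+1$ since $\mu\cdot\mu = 1$, so $\theta - 2(\mu\cdot\theta)\mu = (2m+1)\mu - 2(2m+1)\mu = -(2m+1)\mu$, which is precisely the parameter vector of $\Lambda_m^-$. This part is immediate and independent of $m$.

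The substance of the proof is the $\lambda$-component. Writing $\ell := \lambda_m^+$ for brevity and noting $\varepsilon_i\cdot\theta = \tfrac{1}{2}(2m+1)$ for $i=1,2,3$, the denominator in Proposition \ref{thm:boa} specializes to
\begin{equation*}
D_m(t) := \frac{(t-1)\,\ell'(t) - \tfrac{1}{2}(2m+1)}{\ell(t)} + \frac{\ell'(t) - 1 - \tfrac{1}{2}(2m+1)}{\ell(t) - t} - \frac{t\,\ell'(t) + \tfrac{1}{2}(2m+1)}{\ell(t) - 1},
\end{equation*}
and the claim becomes $\lambda_m^-(t) = \ell(t) + 2(2m+1)/D_m(t)$. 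I would then substitute the common rational parametrization: $t = t(w)$ from eqn.\,\ref{eq:1.2}, $\ell = \lambda_m^+(w)$ from eqn.\,\ref{eq:1.3} with the explicit $f_m^+, g_m^+$, and $\ell'(t)$ from eqn.\,\ref{eq:wder}. Because $t(w)$, $\lambda_m^\pm(w)$ and $\tfrac{d}{dw}\lambda_m^+(w)$ are all rational in $w$, the expression $\ell(t) + 2(2m+1)/D_m(t)$ becomes an explicit rational function of $w$, and the assertion is its equality with $\lambda_m^-(w)$ from eqn.\,\ref{eq:1.3} --- after clearing denominators, a polynomial identity in $w$ that is checked mechanically. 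Here we invoke Theorem \ref{thm:1.1}, which guarantees that $\lambda_m^+(w)$ genuinely defines a solution of $P_{VI}((2m+1)\mu)$ through the parametrization $t=t(w)$ that $\lambda_m^-(w)$ also uses, so both sides live on the same algebraic curve and the rational identity in $w$ is conclusive; the finitely many points where $D_m$ or a factor $\ell,\ \ell-1,\ \ell-t$ vanishes do not affect this.

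The main obstacle is purely the size of the computation. Already for $m=4$ the polynomials $f_m^\pm, g_m^\pm$ have degree $24$--$30$, and after forming $D_m$, clearing the three nested denominators, and cross-multiplying against $\lambda_m^-(w)$, one compares polynomials in $w$ of degree on the order of a hundred. There is no conceptual difficulty --- only bookkeeping best handled by computer algebra, one value of $m$ at a time --- and this is exactly why the statement is restricted to $m \le 4$.
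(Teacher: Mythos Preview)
Your proposal is correct and follows essentially the same approach as the paper: expand $R_5\Lambda_m^+$ via Proposition \ref{thm:boa}, reduce to a rational identity in the uniformizing parameter $w$ using eqn.\,\ref{eq:wder} and the explicit data of Section \ref{sec:adhmcompute}, verify it case by case for $0\le m\le 4$, and then invoke the involutivity of $R_5$ for the reverse direction. Your treatment adds the explicit check of the parameter component and a remark on why the identity in $w$ is conclusive, but otherwise matches the paper's argument.
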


\begin{proof} Referring to Theorem \ref{thm:1.1}, 
$\Lambda^\pm_m = \left[ \lambda^\pm_m, \pm (2m+1)\mu \right]$ where 
 $\lambda^\pm_m := \lambda^\pm_m(w)$ is a rational function of $w$.    
 The derivative  $(\lambda^\pm_m)'$  with respect to $t$ is then computed implicitly as in eqn. \,\ref{eq:wder}, and 
 is also expressed as a rational function of $w$.   
 From Proposition \ref{thm:boa} we have: 
\begin{align*}
 R_5 & \left[ \lambda^+_m; \left(2m+1\right) \mu \right]   = \\
&  \left[
 \lambda_m^+  + { 2 \,\left( 2m+1\right) \over {(t-1)( \lambda_m^+)'  - \tfrac{1}{2}\left( 2m+1\right) \over  \lambda_m^+ } + 
{( \lambda_m^+)' - 1-\tfrac{1}{2}\left( 2m+1\right) \over   \lambda_m^+  - t }-
{t ( \lambda_m^+)'  +\tfrac{1}{2}\left( 2m+1\right)\over  \lambda_m^+  - 1 } }; -\left(2m+1\right) \mu \right].
\end{align*}

\noindent  To prove $R_5 \Lambda^+_m = \Lambda^-_m$ it remains to check that
 \begin{equation*}
 \lambda_m^- = \lambda_m^+  + { 2 \,\left( 2m+1\right) \over {(t-1)( \lambda_m^+)'  - \tfrac{1}{2}\left( 2m+1\right) \over  \lambda_m^+ } + 
{( \lambda_m^+)' - 1-\tfrac{1}{2}\left( 2m+1\right) \over   \lambda_m^+  - t }-
{t ( \lambda_m^+)'  +\tfrac{1}{2}\left( 2m+1\right)\over  \lambda_m^+  - 1 } }, 
\end{equation*}
which was verified by explicit computation for each $0 \le m \le 4$ using the 
solutions listed in \ref{sec:adhmcompute}.     Now applying 
$R_5$ to each side yields $R_5 R_5 \Lambda^+_m  =R_5 \Lambda^-_m$, 
which establishes $R_5 \Lambda^-_m =\Lambda^+_m $ for each $0 \le m \le 4$ since $R_5$ is involutive. 
\end{proof}

The group of Okamoto transformations is generated by $R_5$ together with some 
  obvious symmetries of the Painlev\'e VI equation eqn.\,\ref{eq:1.1theta}.  
Since $\varepsilon_j \cdot \varepsilon_j = 1$ for each $1 \le j \le 4$, each of the complex reflections of $\comp^4$ appearing below 
is the complexification of an ordinary reflection through a hyperplane in $\real^4$.  

\begin{proposition}Suppose $\Lambda = \left[ \lambda(t); \theta \right]$ solves $P_{VI}$. 
  Then each of $R_1 \Lambda$, $R_2 \Lambda$, $R_3 \Lambda$, and $R_4 \Lambda$ as defined below 
  also solves $P_{VI}$: 
  \begin{enumerate}
  \item
  $R_1 \Lambda := \left[ \lambda(t);   \theta - 2 (\varepsilon_1\cdot \theta)\,  \varepsilon_1 \right]$. 
   \item
  $R_2\Lambda:=\left[ \lambda(t);\theta - 2 (\varepsilon_2\cdot \theta)\,  \varepsilon_2 \right]$. 
   \item
  $R_3\Lambda := \left[ \lambda(t);\theta - 2 (\varepsilon_3\cdot \theta)\,  \varepsilon_3\right]$.
   \item
  $R_4\Lambda := \left[ \lambda(t);\theta - 2 (\varepsilon_4\cdot \theta-1)\,  \varepsilon_4\right]$.
  \end{enumerate}
\end{proposition}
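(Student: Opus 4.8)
The plan is to verify directly that each of the four transformations $R_1,R_2,R_3,R_4$ sends a solution of $P_{VI}(\theta)$ to a solution of $P_{VI}(\theta')$ for the indicated $\theta'$, by showing that the Painlev\'e VI equation of eqn.\,\ref{eq:1.1theta}, with $\lambda(t)$ left unchanged, is invariant under the stated sign changes of the parameters. The key observation is that the right-hand side of eqn.\,\ref{eq:1.1theta} depends on $\theta=(\theta_1,\theta_2,\theta_3,\theta_4)$ only through the four quantities $\theta_1^2$, $\theta_2^2$, $\theta_3^2$, and $(\theta_4-1)^2$, each of which multiplies a distinct rational function of $\lambda$ and $t$. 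Consequently, replacing $\theta$ by $\theta'$ has no effect on the equation precisely when these four squared quantities are unchanged.

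First I would record the action of each $R_j$ on the parameters. For $j=1,2,3$, the reflection $\theta\mapsto\theta-2(\varepsilon_j\cdot\theta)\,\varepsilon_j$ simply replaces $\theta_j$ by $-\theta_j$ and fixes the other three components; since only $\theta_j^2$ enters the equation, $\theta_j^2$ is preserved, and hence $P_{VI}(\theta)=P_{VI}(R_j\theta)$ as differential equations in $\lambda$. Therefore if $\lambda(t)$ solves $P_{VI}(\theta)$ it also solves $P_{VI}(R_j\theta)$, which is exactly the assertion that $R_j\Lambda$ solves $P_{VI}$. For $j=4$, the map $\theta\mapsto\theta-2(\varepsilon_4\cdot\theta-1)\,\varepsilon_4$ replaces $\theta_4$ by $2-\theta_4$ and fixes $\theta_1,\theta_2,\theta_3$; one then checks $(\,(2-\theta_4)-1\,)^2=(1-\theta_4)^2=(\theta_4-1)^2$, so again all four relevant combinations are unchanged and the equation is literally the same. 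This is the affine reflection fixing the point $\theta_4=1$ about which the coefficient $(\theta_4-1)^2$ is symmetric, which explains the shift by $\varepsilon_4$ in the $R_4$ case.

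There is no serious obstacle here: the proof is a one-line inspection of eqn.\,\ref{eq:1.1theta} for each of the four cases, together with the elementary identity $(1-\theta_4)^2=(\theta_4-1)^2$ for $R_4$. The only point requiring a word of care is the asymmetry of the $R_4$ formula relative to $R_1,R_2,R_3$, which comes from the fact that the fourth parameter appears as $(\theta_4-1)^2$ rather than $\theta_4^2$; writing out this single substitution makes the claim transparent.
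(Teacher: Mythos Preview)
Your proof is correct and follows essentially the same approach as the paper: both observe that $\lambda(t)$ is unchanged while each $R_j$ acts on $\theta$ by a sign flip (or, for $R_4$, the substitution $\theta_4\mapsto 2-\theta_4$) that preserves the relevant squared quantity $\theta_j^2$ (resp.\ $(\theta_4-1)^2$) appearing in eqn.\,\ref{eq:1.1theta}, so the differential equation is literally unchanged.
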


\begin{proof} In each case the  function $\lambda(t)$  is unchanged, while 
the parameter $\theta \in \comp^4$ is changed.  But in each case  the change of parameter $\theta$ 
does not change the differential equation eqn.\,\ref{eq:1.1theta}:
\begin{enumerate}
\item $P_{VI}(\theta - 2 (\varepsilon_1\cdot \theta)\,  \varepsilon_1) = P_{VI}(\theta)$ because  
$\theta - 2 (\varepsilon_1\cdot \theta)\,  \varepsilon_1 = (-\theta_1,\theta_2,\theta_3,\theta_4)$ 
 and $(-\theta_1)^2=\theta_1^2$. 
\item 
$P_{VI}(\theta - 2 (\varepsilon_2\cdot \theta)\,  \varepsilon_2) = P_{VI}(\theta)$ because  
$\theta - 2 (\varepsilon_2\cdot \theta)\,  \varepsilon_2 = (\theta_1,-\theta_2,\theta_3,\theta_4)$ 
and $(-\theta_2)^2=\theta_2^2$. 
\item 
$P_{VI}(\theta - 2 (\varepsilon_3\cdot \theta)\,  \varepsilon_3) = P_{VI}(\theta)$ because  
$\theta - 2 (\varepsilon_3\cdot \theta)\,  \varepsilon_3 = (\theta_1,\theta_2,-\theta_3,\theta_4)$ 
and $(-\theta_3)^2=\theta_3^2 $. 
\item $P_{VI}(\theta - 2 (\varepsilon_4\cdot \theta-1)\,  \varepsilon_4) = P_{VI}(\theta)$ because  
$\theta - 2 (\varepsilon_4\cdot \theta-1)\,  \varepsilon_4 = (\theta_1,\theta_2,\theta_3,2-\theta_4)$ 
and $((2-\theta_4) -1)^2=(\theta_4 -1)^2  $. 
\end{enumerate}
\end{proof} 
\noindent  Okamoto observed that the group of affine automorphisms of $\comp^4$ generated by 
 the complex reflections of $R_1, \dots, R_5$  is the 
 affine Weyl group of type $D_4$.  Note the complex reflections $R_1$,$R_2$ and $R_3$ preserve the origin, 
 but   $R_4$ does not   
since it reflects  the affine plane  $\varepsilon_4\cdot \theta-1 =0$ does not pass through the origin.

We can now specify the  Okamoto transformation $B$ in the diagram eqn.\,\ref{eq:zig}:  
\begin{align*}
B &:= (R_1 R_2 R_3   R_5) \, R_4  \, (R_5 R_3  R_2  R_1). 
\end{align*}
\noindent  It is easily seen that $B$ is involutive, since 
each $R_j$ is involutive.   
The following Proposition is only stated and proved for the finite set of case  $0 \le m \le 3$ because the  proof is based on 
 computations with the explicit  solutions exhibited in section \ref{sec:adhmcompute}.   However, it is reasonable 
 to conjecture that the Proposition holds more generally for each nonnegative integer $m$:

\begin{proposition}
\label{thm:aprop}
For each nonnegative integer $m \le 3$: 
\begin{equation}
\label{eq:beq}
 B \Lambda_m^- = \Lambda_{m+1}^+. 
 \end{equation}
\end{proposition}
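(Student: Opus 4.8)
The plan is to verify \eqref{eq:beq} by direct computation, case by case, for each $m \in \{0,1,2,3\}$, using the explicit rational functions $\lambda^\pm_m(w)$ and $t(w)$ tabulated in Section \ref{sec:adhmcompute}. The overall strategy is to follow the definition $B := (R_1 R_2 R_3 R_5)\, R_4\, (R_5 R_3 R_2 R_1)$ and track both the function $\lambda(t)$ and the parameter vector $\theta \in \comp^4$ through each of the nine elementary transformations in turn, starting from $\Lambda^-_m = \left[\lambda^-_m(t); -(2m+1)\mu\right]$.

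First I would compute the effect on the parameter vector alone. Since $R_1,R_2,R_3$ are the sign-change reflections in the first three coordinates, $R_5$ is the reflection $\theta \mapsto \theta - 2(\mu\cdot\theta)\mu$, and $R_4$ is the affine reflection $\theta \mapsto (\theta_1,\theta_2,\theta_3,2-\theta_4)$, I would track the orbit of $-(2m+1)\mu$ under the word defining $B$. The claim $B\Lambda^-_m = \Lambda^+_{m+1}$ forces the parameter component to land on $(2m+3)\mu = (2(m+1)+1)\mu$, and checking this is a finite linear-algebra computation in the affine Weyl group of type $D_4$ that is independent of $m$ beyond the scalar $2m+1$; this part I would do once symbolically in the parameter $n := 2m+1$.

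The substantive part is tracking the function $\lambda(t)$. The reflections $R_1,R_2,R_3,R_4$ leave $\lambda(t)$ unchanged, so only the two occurrences of $R_5$ in the word for $B$ actually move the function; by Proposition \ref{thm:r5} the inner block $R_5 R_3 R_2 R_1$ applied to $\Lambda^-_m$ already produces $\Lambda^+_m$ (since $R_1,R_2,R_3$ fix the function and the sign changes in the parameter are absorbed, and $R_5$ flips $\Lambda^-_m$ to $\Lambda^+_m$ — though one must check the parameter signs are in the right configuration for $R_5$ of Proposition \ref{thm:boa} to apply). Then $R_4$ acts, changing only $\theta$, and the outer block $R_1 R_2 R_3 R_5$ applies a final $R_5$. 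So concretely $B\Lambda^-_m$ reduces to applying $R_5$ (in the formula of Proposition \ref{thm:boa}, with the appropriate intermediate $\theta$) to a known explicit rational function, and I would substitute $\lambda = \lambda^+_m(w)$, $t = t(w)$, and $d\lambda/dt$ from \eqref{eq:wder} into that formula, simplify the resulting rational function of $w$, and check it equals $\lambda^+_{m+1}(w)$ as given by \eqref{eq:1.3} with the tabulated $f^+_{m+1}, g^+_{m+1}$.

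The main obstacle is bookkeeping rather than conceptual: I expect the intermediate parameter vector fed into the $R_5$ formula of Proposition \ref{thm:boa} to be something like $\theta = (\theta_1,\theta_2,\theta_3,\theta_4)$ with several nonzero entries (not a multiple of $\mu$), so the denominator $\tfrac{(t-1)\lambda'(t)-\theta_1}{\lambda} + \tfrac{\lambda'(t)-1-\theta_2}{\lambda-t} - \tfrac{t\lambda'(t)+\theta_3}{\lambda-1}$ must be assembled carefully and cleared over a common denominator before the final rational simplification. Getting the intermediate $\theta$'s right through all nine steps is where an error would most likely creep in, so I would cross-check by confirming that the $R_4$ step lands the fourth coordinate correctly (it is the only step that moves the origin) and that the final parameter is exactly $(2m+3)\mu$. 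As a sanity check, the $m=0$ case should reproduce the passage $\Lambda^-_0 \to \Lambda^+_1$, and since $\lambda^+_1(t) = \lambda^+_0(t)$ by the coincidence noted after \eqref{eq:1.6c}, this is the lightest case to test the machinery before proceeding to $m = 1,2,3$ where the polynomials $f^+_{m+1},g^+_{m+1}$ have genuinely high degree and the simplification is heavier.
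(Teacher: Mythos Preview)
Your overall strategy of case-by-case verification is the same as the paper's, but there is a real gap in the shortcut you take for the function component. You assert that the inner block $R_5 R_3 R_2 R_1$ sends $\Lambda_m^-$ to $\Lambda_m^+$ by appealing to Proposition~\ref{thm:r5}, but that proposition applies $R_5$ with parameter $-(2m+1)\mu$, whereas after $R_3R_2R_1$ the parameter is $\bigl(\tfrac{2m+1}{2},\tfrac{2m+1}{2},\tfrac{2m+1}{2},-\tfrac{2m+1}{2}\bigr)$. The formula for $R_5$ in Proposition~\ref{thm:boa} depends on the \emph{signs} of $\theta_1,\theta_2,\theta_3$, not only on their squares, and $R_5$ does not commute with $R_1,R_2,R_3$ in the affine $D_4$ Weyl group. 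Consequently the intermediate function is \emph{not} $\lambda_m^+$; indeed the resulting parameter is $(0,0,0,-(2m+1))$, not $(2m+1)\mu$. Substituting $\lambda = \lambda_m^+(w)$ into the outer $R_5$, as you plan, would therefore produce the wrong rational function.

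The paper sidesteps the problem of composing two $R_5$'s through an unidentified intermediate by applying $(R_5 R_3 R_2 R_1)$ to both sides of \eqref{eq:beq}, obtaining the equivalent statement
\[
R_4 (R_5 R_3 R_2 R_1)\,\Lambda_m^- \;=\; (R_5 R_3 R_2 R_1)\,\Lambda_{m+1}^+ .
\]
Now each side carries exactly one $R_5$, applied to a function already tabulated in Section~\ref{sec:adhmcompute}: $\lambda_m^-$ on the left and $\lambda_{m+1}^+$ on the right. Both sides land at the same parameter $(2m+3)\varepsilon_4$, and what remains is a single rational-function identity in $w$ for each $m\in\{0,1,2,3\}$. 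Your direct approach could be repaired by honestly computing the intermediate function from the inner $R_5$ and then feeding it into the outer one, but the paper's symmetrization is both simpler and lighter to verify.
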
    

\begin{proof}  Applying  $(R_5 R_3 R_2 R_1)$ to both sides of eqn.\,\ref{eq:beq} yields the equivalent 
\begin{equation}
\label{eq:expo} R_4 (R_5 R_3 R_2 R_1) \Lambda_m^- = (R_5 R_3 R_2 R_1) \Lambda_{m+1}^+.
\end{equation}
The left side of eqn.\,\ref{eq:expo} expands to  
\begin{align*}  
&R_4 (R_5 R_3 R_2 R_1) \Lambda_m^- =R_4 (R_5 R_3 R_2 R_1) \left[\lambda^-_m; - \left(2m+1\right) \mu \right]
\\
& \quad =\left[
\lambda_m^-  + { \left( 2 m+1 \right) \over {(t-1)( \lambda_m^-)'  - \tfrac{1}{2}\left( 2 m+1\right) \over  \lambda_m^- } + 
{( \lambda_m^-)' - 1 -\tfrac{1}{2}\left( 2 m+1\right) \over   \lambda_m^-  - t }-
{t ( \lambda_m^-)'  +\tfrac{1}{2}\left( 2 m+1\right) \over  \lambda_m^-  - 1 } }, 
(2m + 3) \varepsilon_4 \right]
\end{align*}
\noindent and the right side expands to
\begin{align*} 
& (R_5 R_3 R_2 R_1) \Lambda_{m+1}^+=(R_5 R_3 R_2 R_1) \left[\lambda^+_{m+1};  \left(2m + 3\right) \mu \right]
\\
& \quad =\left[
 \lambda_{m+1}^+  + { -\left(2m + 3\right) \over {(t-1)( \lambda_{m+1}^+)'  + \tfrac{1}{2}\left(2m + 3\right) \over  \lambda_{m+1}^+ } + 
{( \lambda_{m+1}^+)' - 1 + \tfrac{1}{2}\left(2m + 3\right)  \over  \lambda_{m+1}^+  - t }-
{t ( \lambda_{m+1}^+)'  - \tfrac{1}{2}\left(2m + 3\right)  \over \lambda_{m+1}^+  - 1 } } 
;(2m + 3) \varepsilon_4 \right]. 
\end{align*}
\noindent  To prove $B \Lambda_m^- = \Lambda_{m+1}^+$ it remains to check that 
\begin{align*} 
&\lambda_m^-  + { \left( 2 m+1 \right) \over {(t-1)( \lambda_m^-)'  - \tfrac{1}{2}\left( 2 m+1\right) \over  \lambda_m^- } + 
{( \lambda_m^-)' - 1 -\tfrac{1}{2}\left( 2 m+1\right) \over   \lambda_m^-  - t }-
{t ( \lambda_m^-)'  +\tfrac{1}{2}\left( 2 m+1\right) \over  \lambda_m^-  - 1 } }  \\
& \quad =
 \lambda_{m+1}^+  + { -\left(2m + 3\right) \over {(t-1)( \lambda_{m+1}^+)'  + \tfrac{1}{2}\left(2m + 3\right) \over  \lambda_{m+1}^+ } + 
{( \lambda_{m+1}^+)' - 1 + \tfrac{1}{2}\left(2m + 3\right)  \over  \lambda_{m+1}^+  - t }-
{t ( \lambda_{m+1}^+)'  - \tfrac{1}{2}\left(2m + 3\right)  \over \lambda_{m+1}^+  - 1 } },
\end{align*}
which was verified by explicit computation for each $0 \le m \le 3$ using the 
solutions listed in Section \ref{sec:adhmcompute}. 
\end{proof}

The proof of Theorem \ref{thm:pair} now follows from Propositions \ref{thm:r5} and \ref{thm:aprop} applied 
 to the Okamoto transformations $Q := B  \,R_5$ and $Q^{-1} :=  R_5 \, B$ introduced in eqn. \,\ref{eq:qdef}: 
\begin{equation*}
\begin{tikzcd}[row sep=normal, column sep=large]
 \Lambda_0^+   
 \arrow[loop left,"A",leftrightarrow] 1
 \arrow[dd,leftrightarrow, "R_5" ' ] 
 \arrow[r,"Q"]
  &  
 \Lambda_1^+ 
\arrow[dd,leftrightarrow, "R_5" ' ] 
 \arrow[ddl,leftrightarrow,"B" ']
  \arrow[r,"Q"]
  &
 \Lambda_2^+ 
  \arrow[dd,leftrightarrow, "R_5" ' ] \arrow[ddl,leftrightarrow,"B" ']
   \arrow[r,"Q"]
   & 
 \Lambda_3^+ 
 \arrow[dd,leftrightarrow, "R_5" ' ] \arrow[ddl,leftrightarrow,"B" ']
  \arrow[r,"Q"]
  &
 \Lambda_4^+  
 \arrow[dd,leftrightarrow, "R_5" ' ] 
 \arrow[ddl,leftrightarrow,"B" ']\\
&  
&  
& 
&
\\
 \Lambda_0^- 
  \arrow[r,"Q^{-1}" ']
 &   \Lambda_1^- 
  \arrow[r,"Q^{-1}" ']
 & \Lambda_2^- 
  \arrow[r,"Q^{-1}" ']
 & 
\Lambda_3^- 
 \arrow[r,"Q^{-1}" ']
& \Lambda_4^- 
& 
     \end{tikzcd}
\end{equation*}
\noindent 
Note that $Q Q^{-1} \Lambda = \Lambda = Q^{-1} Q \Lambda$ as suggested by the notation, since $R_5$ and 
 $B$ are both involutive.

\section{Isomonodromic Deformations via Hitchin's Logarithmic Connection}
\label{sec:biglog} 

\subsection{On Trivial Equivariant Vector Bundles}

In this section, we generalize Hitchin's method of 
constructing isomonodromic deformations and 
associated solutions of $P_{VI}$.  
Hitchin constructs a natural meromorphic connection 
on a trivial vector bundle over an 
``equivariant compactification" $Z$ of 
a quotient $\sltc/\Gamma$ by a finite subgroup $\Gamma$. 
The monodromy of the 
pullback connection on a rational curve in $Z$ 
is then invariant under deformations of the rational curve \cite{H2, H1}.  
Hitchin  applies a result of Jimbo-Miwa \cite{JM} to extract 
$P_{VI}$ solutions corresponding to such isomonodromic deformation.  
Following an observation of Bor \cite{B}, we describe the 
generaliztion of Hitchin's 
construction to nontrivial vector bundles over $Z$ which admit 
lifts of the $\sltc$ action.  
We refer to Malgrange \cite{Ml} for the general theory of 
isomonodromic deformations.  

We first review
Hitchin's construction of a flat logarithmic, meaning meromorphic with logarithmic singularities, connection on an 
equivariant compacification.   
Let $Z$ be a compact three-dimensional  complex manifold on which $\sltc$ acts 
holomorphically, 
let $Y$ be a (possibly singular) $\sltc$-invariant hypersurface in $Z$, and suppose 
that $Z-Y$ is an $\sltc$ orbit with stabilizer conjugate to 
a finite subgroup $\Gamma \subset \sltc$.    Then $Z$ is 
called an {\it equivariant compactification} of $\sltc/\Gamma$.  
The linearization of the 
$\sltc$ action on $Z$  defines a holomorphic vector-bundle map 
\begin{align}
\alpha:   Z \times\liesltc   \to TZ, \label{eq:2.1} 
\end{align}
where $\liesltc$ is the Lie algebra of $\sltc$, and $TZ$ is the 
tangent bundle of $Z$.   
For a point $q \in Z$, the restriction to fibers over $q$ 
is a linear map 
$\alpha_q: \liesltc \to T_q Z$, which is 
invertible if and only if $q \in Z - Y$.   It follows that 
inverse vector bundle map 
\begin{align*}
 \alpha^{-1}: TZ \to  Z  \times \liesltc, 
 \end{align*}
cannot be holomorphic since it is singular on $Y$, 
in fact $\alpha^{-1}$ is logarithmic.  
Note that $\alpha^{-1}$
may be thought of as   
a meromorphic $\liesltc$-valued one-form on $Z$.    
Let $V$ be an $\sltc$-representation.  
The product $\sltc$ action on $E := Z \times V$ is 
a {\it lift} of the $\sltc$ action on $Z$ to the trivial 
vector bundle $\pi: E \to Z$, meaning that 
$\sltc$ acts on $E$ by vector-bundle morphisms such that the 
induced action on the base space $Z$ coincides with the original  action on $Z$. 
Identifying a section $f$ of the trivial bundle  $E= Z \times V$ 
with the corresponding map $Z \to V$, 
we can define Hitchin's meromorphic connection by  
\begin{align} \nabla f:= d \, f- \sqb{ \alpha^{-1}} {\, f \,} ,  
\label{eq:2.3}
\end{align}
where $\sqb \cdot \cdot: \liesltc \times V \to V$ denotes  
the linearization of the representation $\sltc \times V \to V$.

\subsection{On General Equivariant Vector Bundles}
\label{sec:lifts}

We now generalize the construction of $\nabla$ to  
lifts of an $\sltc$ action on $Z$ to any vector bundle $\pi:E \to Z$,  
without assuming that  $E$ is a trivial bundle $Z \times V$.  
In general, a {\it lift} of the $\sltc$ action to $E$ is an  
$\sltc$ action on $E$ by vector-bundle morphisms such that 
the induced action on the base space $Z$ coincides with the 
original action on $Z$.  
Let $X \in T_q Z$ be a tangent vector at $q \in Z - Y$, and  
let $f$ be a section of $E$.   Define $\nabla$ on $E$ by: 
\begin{align}
 \left(\nabla_X f\right) (q):= \lim_{t \to 0} 
{f( \exp(t \, \alpha_q^{-1}(X) ) \cdot q) 
- \exp(t \, \alpha_q^{-1}(X) ) \cdot f(q) 
\over t},
\label{eq:2.4}
\end{align}
where  the one-parameter subgroup 
$\exp(t \, \alpha_q^{-1}(X) ) \subset \sltc$ acts on  $q \in Z$ in 
the first term, and on 
$f(q) \in E$ in the second term.  Since $\sltc$ acts 
holomorphically on $E$ and $Z$, and since $\alpha^{-1}$ is meromorphic on $Z$, 
$\nabla$ defines a meromorphic connection on $Z$.   
The restriction of $\nabla$ to $Z - Y=\sltc/\Gamma$ is a flat holomorphic connection. 
To verify this, note that $\Gamma$ is finite and that 
the pullback of $\nabla$ by the quotient map  
the quotient map $\sltc \to \sltc/\Gamma$ is a flat holomorphic 
connection  on the trivial bundle $\sltc \times V$.  

It is easy to verify that the connection eqn.\,\ref{eq:2.4} coincides with 
Hitchin's connection eqn.(2.3) when $E$ is trivial.   
On $E = Z \times V$, identify as before the section $f$ of $E$ 
with the corresponding map $Z \to V$, and compute  
\begin{align}
 \left( \nabla_X f\right) (q)&= 
\lim_{t \to 0} { f( \exp(t \, \alpha_q^{-1}(X) ) \cdot q) 
- \exp(t \, \alpha_q^{-1}(X) ) \cdot  f(q) 
\over t} 
\nonumber \\
&= \lim_{t \to 0} { f( \exp(t \, \alpha_q^{-1}(X) ) \cdot q) 
- \left( 
 f(q) + t \sqb{ \alpha_q^{-1}(X)} {  f(q) } + O(t^2)
\right)
\over t} 
\nonumber \\
&=  \left( {\mathcal L}_X  f\right) (q) - 
 \sqb{ \alpha_q^{-1}(X)} {  f(q) } .  \nonumber
 \end{align}
If $V$ is a tangent vector field, 
 $\nabla_V f := i_V (\nabla f)$, and applying the 
contraction $i_V$ to
eqn.\,\ref{eq:2.3} gives   
$$ \nabla_V f = {\mathcal L}_V f- \sqb{ \alpha^{-1}(V)} {\, f \,},   \nonumber $$
establishing the equivalence of the two connections on a trivial bundle.

The restriction of  $\nabla$ 
to $Z-Y$ is flat, so it defines a representation of the  
fundamental group of $Z -Y$.  
More precisely, fixing a basepoint $q \in Z - Y$, 
the holonomy (monodromy) along a path that starts and ends at $q$  depends only on 
the homotopy class of the path, defining the 
{\it monodromy representation} of the flat connection $\nabla$, 
\begin{align*}
h(\nabla,q): \pi_1(Z - Y,q)  \to GL(E_q) , 
\end{align*}
where $GL(E_q)$ is the automorphism group 
of the fiber $E_q := \pi_1^{-1}(q)$.    
A continuous deformation of the basepoint $q$ preserves the 
isomorphism class of $\pi_1(Z -Y, q) \simeq \Gamma$, and also the 
isomorphism class of the monodromy representation $h(\nabla,q)$.

Hitchin's construction of isomonodromic 
deformations from the connection $\nabla$ carries over 
essentially without change to nontrivial bundles $E \to Z$.  
The basic idea is to  
pull back the connection $\nabla$  to a rational curve in $Z$, 
and to consider continuous deformations of the 
rational curve.  A holomorphic map $\kappa : \comp P^1 \hookrightarrow Z$ will 
be called a 
{\it parametrized rational curve}, and the image $\kappa(\comp P^1) \subset Z$ 
will be called the underlying {\it (unparametrized) rational curve}.   
A  curve will 
be called {\it transverse} 
if it is  nonsingular and it intersects the 
hypersurface $Y$ transversely.   
Choose a basepoint $p \in \comp P^1$ such that 
$\kappa (p) \in Z - Y$. 
The  preimage $\kappa ^{-1}(Y) = \{ a_1, a_2, \dots, a_n \}$ is a 
finite subset of $\comp P^1$ which does not contain $p$.  
The pullback $\kappa ^* \nabla$ is a meromorphic connection on 
the  bundle $\kappa ^* E \to \comp P^1$.  
The meromorphic connection is 
{\it logarithmic} iff the pullback to any transverse curve has a simple pole, 
an we will see the simple poles explicitly in our computations.     
 The 
restriction of $\kappa ^* \nabla$ 
to $\comp P^1 -  \{ a_1, \dots, a_n \}$ is a flat 
holomorphic connection with monodromy representation 
$$ h(\kappa ^* \nabla,p): 
\pi_1(\comp P^1 - \{ a_1, \dots, a_n \},p) \to GL((\kappa ^* E)_p) := 
GL(E_{\kappa (p)}).  \nonumber $$
By functoriality, the monodromy representation 
of $\kappa ^* \nabla$ factors through the monodromy representation of 
$\nabla$,  
\begin{align*}
 h(\kappa ^* \nabla, p) = h(\nabla,\kappa (p)) \circ \kappa _* ,
  \end{align*}
where $\kappa _*$ is the homomorphism of fundamental groups induced by 
$\kappa $.    
A continuous transversality-preserving deformation of a transverse 
parametrized rational curve $\kappa $ results in 
the points $\{ p,  a_1, a_2, \dots a_n \}$  
moving on $\comp P^1$ while remaining distinct.
Such a deformation of $\kappa $  induces a continuous  
deformation of   $\kappa _*$ and of $h(\nabla, \kappa (p))$, 
and consequently of the monodromy representation 
$h(\kappa ^* \nabla, p)$. 
The key observation, \cite{H1}, Prop. 6, 
is that the deformation preserves
the isomorphism classes of $\kappa _*$ 
and $h_{\kappa (p)}$, and therefore the 
isomorphism class of the monodromy representation is preserved 
under the deformation of the connection $\kappa ^* \nabla$.  
This is in essence the 
defining property of an {\it isomonodromic deformation} 
of a meromorphic connection, see \cite{Ml} details.

If the bundle $E \to Z$ is nontrivial, the isomorphism class of 
the pullback bundle $\kappa^* E \to \comp P^1$ need not be preserved 
under deformation of the rational curve $\kappa$.  
The change of isomorphism type of $\kappa ^* E$ under deformation of $\kappa $ 
is associated with the term  
``jumping line" in the terminology of  
holomorphic vector bundles \cite{At1,OSS}, and with the 
terms ``$\tau$-function" 
or ``$\tau$-divisor" 
in the terminology of isomonodromic deformations \cite{Ml,JM}.

\subsection{On the Equivariant Instanton Bundles $E_m$}

In this section, we give an explicit formula for the 
meromorphic one-form $\alpha^{-1}$ constructed from 
the $\sltc$ action on the  three-dimensional 
complex projective space $Z := P(\irrep 3)$, 
where $\irrep 3$ is the irreducible four-dimensional 
representation.    

A degree-$d$ homogeneous polynomial on $\irrep 3$ corresponds 
to a linear map from the symmetric product $S^d(\irrep 3)$ to $\comp$, and   
an $\sltc$-invariant homogeneous polynomial corresponds to an 
equivariant map $S^d(\irrep 3) \to \irrep 0$.  From Schur's lemma 
and the following decompositions into irreducibles, 
\begin{align*}
S^1(\irrep 3) &\simeq \irrep 3, \nonumber \\
S^2(\irrep 3) &\simeq \irrep 2 \oplus \irrep 6,  \nonumber \\
S^3(\irrep 3) &\simeq \irrep 3 \oplus \irrep 5 \oplus \irrep 9,  \nonumber \\
S^4(\irrep 3) & \simeq \irrep 0 \oplus \irrep 4 \oplus \irrep 6 \oplus \irrep 8, 
\end{align*}
we conclude that the vector space of invariant degree-$d$ polynomials 
has dimension $0$ if $1 \le d \le 3$, and dimension $1$ if $d=4$.  
The  degree-$4$ invariant polynomial $p$ defined by 
\begin{align*}
 p(u) := \cbra {u^2} {u^2} { } , \qquad u \in \irrep 3,   
 \end{align*}
is nonzero, as one can check by computing $p( x \, y \, (x+y) ) \ne 0$.  
Any  degree-$4$ invariant polynomial on 
$\irrep 3$ is then a scalar multiple of $p$.  

\begin{lemma} 
\label{thm:5.1} 
\hfil\break
\begin{enumerate}
\item 
By the fundamental theorem of algebra, a 
degree-3 homogeneous polynomial $u \in \irrep 3$ can be 
factored as   
a product $u = a\, b \, c$ of degree-1 homogeneous 
polynomials $a,b,c \in \irrep 1$, and we  define  
$$ q(u) := \left(\cbra a b {} \cbra b c {} \cbra c a {} \right)^2  . $$
Then $q(u) = K_q \, p(u)$ for some nonzero constant $K_q \in \comp$.  
\item 
Choose a basis $\{ e_1, e_2, e_3, e_4 \}$ of $\irrep 3$ and 
a basis $\{ \gn 1, \gn 2, \gn 3 \}$ of $\liesltc$. 
The one-dimensional representation $\Lambda^4(\irrep 3) \simeq \irrep 0$ 
has basis $e_1 \wedge e_2 \wedge e_3 \wedge e_4$,  and we  
define $r(u)$ by means of the equation 
$$ \sqb {\gn 1} u \wedge \sqb {\gn 2} u \wedge \sqb {\gn 3} u \wedge u = 
r(u) \, 
e_1 \wedge e_2 \wedge e_3 \wedge e_4 . $$
Then $r(u) = K_r \, p(u)$ for some nonzero constant $K_r \in \comp$ 
(depending on the basis choices).      
\end{enumerate}
\end{lemma}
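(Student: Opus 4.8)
The plan is to prove both parts by the same mechanism: each of $q(u)$ and $r(u)$ is an $\sltc$-invariant polynomial of degree $4$ on $\irrep 3$, so by the dimension count already established (the space of degree-$4$ invariants is one-dimensional, spanned by $p$), each must be a scalar multiple of $p$; it then remains only to check that the scalar is nonzero, which I would do by evaluating at a single convenient point where $p$ is known to be nonzero. So the skeleton has three moves in each part: (i) verify $\sltc$-invariance, (ii) verify homogeneity of degree $4$ in the coefficients of $u$, (iii) exhibit a point $u_0$ with $q(u_0)\neq 0$ (resp.\ $r(u_0)\neq 0$). Step (ii) together with (i) and the one-dimensionality of degree-$4$ invariants forces $q = K_q\, p$ and $r = K_r\, p$; step (iii), combined with $p(u_0)\neq 0$, forces $K_q, K_r \neq 0$.

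For part (1): invariance of $q$ is immediate because each bracket $\cbra a b {}$ is itself $\sltc$-equivariant with values in $\irrep 0$ (the determinant pairing on $\irrep 1$), so $\cbra a b {}\cbra b c {}\cbra c a {}$ is invariant, and squaring keeps it invariant; one should note the answer is independent of the ordering of the factorization $u = a\,b\,c$ and of the individual scalings of $a,b,c$ (rescaling $a\mapsto \lambda a$, $b \mapsto \mu b$, $c \mapsto \nu c$ scales the product by $(\lambda\mu\nu)^2$, which is exactly compensated since $u = abc$ is held fixed forces $\lambda\mu\nu = 1$) — this is what makes $q$ a genuine polynomial function of $u$ rather than of the roots. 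Degree: writing $u$ with its $4$ coefficients, the three roots depend algebraically on $u$ and each bracket is bilinear of total degree $2$ in the root-coefficients, giving degree $6$ before squaring — but homogeneity in $u$ is degree $1$ for each root collectively, and a clean way to pin down the degree is: $q$ scales as $q(s u) = s^{4} q(u)$ because $u = abc \mapsto su = (sa)bc$ rescales one linear factor by $s$ and then $\cbra{sa}{b}{}\cbra{b}{c}{}\cbra{c}{sa}{} = s^2 \cbra a b {}\cbra b c {}\cbra c a {}$, whose square is $s^4$ times the original; alternatively, compute $q$ directly in terms of the discriminant-type invariant of the binary cubic. Finally take $u_0 = x\,y\,(x+y)$, so $a = x$, $b = y$, $c = x+y$; then $\cbra x y {} \neq 0$, $\cbra y {x+y}{} = \cbra y x {} \neq 0$, $\cbra{x+y}{x}{} = \cbra y x {} \neq 0$ (using bilinearity and $\cbra x x {} = 0$), so $q(u_0) \neq 0$, and since $p(u_0) = p(x y (x+y)) \neq 0$ was already observed, $K_q \neq 0$.

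For part (2): invariance of $r$ is the crux, so I expect this to be the main obstacle — though it is not deep. The definition uses a fixed basis, so $r$ a priori looks basis-dependent; the content is that $r$ transforms correctly under the $\sltc$ action on $u$. The argument: for $g \in \sltc$ acting on $\irrep 3$, the map $u \mapsto \sqb{\gn 1}{u}\wedge\sqb{\gn 2}{u}\wedge\sqb{\gn 3}{u}\wedge u$ is $\sltc$-covariant with values in $\Lambda^4(\irrep 3)$, because the adjoint action permutes the $\gn i$ up to an $SL$-matrix (determinant $1$, since the adjoint representation of $\sltc$ on $\liesltc\simeq\irrep 2$ lands in $SL(\liesltc)$ as the Killing form is preserved), and $\Lambda^4$ of the four-dimensional irreducible $\irrep 3$ is the trivial representation (again because $\sltc$ acts by determinant-one matrices on $\irrep 3$). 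Hence $r(g\cdot u) = r(u)$, i.e.\ $r$ is an invariant; the choice of bases only affects the value of the normalizing constant $e_1\wedge e_2\wedge e_3\wedge e_4$ and the basis of $\liesltc$, hence only rescales $r$ by a nonzero constant, which is absorbed into $K_r$. Degree $4$: each $\sqb{\gn i}{u}$ is linear in $u$ (it is the Lie-algebra action) and there is one more factor of $u$, so the wedge is degree $4$ in $u$ — here I simply read off homogeneity $r(su) = s^4 r(u)$. For nonvanishing, evaluate $r$ at $u_0 = x y (x+y)$ directly using the explicit Lie-algebra action eqn.\,\eqref{eq:4.8} on monomials: writing $u_0 = x^2 y + x y^2$ and applying $\gz(x,y)$, $\gp(x,y)$, $\gm(x,y)$ (a convenient basis of $\liesltc$, permissible since changing basis only rescales $r$), compute the three images, form the $4\times 4$ matrix of their coefficients together with those of $u_0$ in the monomial basis $\{x^3, x^2 y, x y^2, y^3\}$, and check its determinant is nonzero — a short finite computation — hence $r(u_0)\neq 0$ and $K_r\neq 0$.

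The one genuinely delicate point is making rigorous that $q$ in part (1) is a \emph{polynomial} function of $u$ and not merely an algebraic one: the roots $a,b,c$ are not polynomial in the coefficients of $u$, only algebraic, so one should argue that the symmetric combination $\bigl(\cbra a b {}\cbra b c {}\cbra c a {}\bigr)^2$ is invariant under the Galois action permuting the roots and hence descends to a polynomial in the elementary symmetric functions, i.e.\ in the coefficients of $u$. Concretely, up to normalization this quantity \emph{is} the discriminant of the binary cubic $u$, which is a classical degree-$4$ invariant; identifying $q$ with (a constant times) that discriminant simultaneously settles invariance, polynomiality, degree $4$, and nonvanishing at a cubic with distinct roots such as $u_0$. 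I would phrase part (1) that way for cleanliness, and handle part (2) by the covariance-of-$\Lambda^4$ argument above plus the explicit monomial computation.
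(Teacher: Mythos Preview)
Your proposal is correct and follows essentially the same approach as the paper: both $q$ and $r$ are degree-$4$ $\sltc$-invariant polynomials on $\irrep 3$, hence scalar multiples of $p$, and nonvanishing at the test point $u_0 = x\,y\,(x+y)$ shows the scalars are nonzero. Your version is more thorough than the paper's---you spell out the invariance arguments, the well-definedness of $q$ as a polynomial in $u$ (via the discriminant identification), and the reason $\Lambda^4(\irrep 3)$ and $\Lambda^3(\liesltc)$ are trivial representations---whereas the paper simply asserts that $q$ and $r$ are degree-$4$ invariants and then computes the explicit constants $K_q = -1/48$ and $K_r = -6$ (for the standard bases) by direct evaluation.
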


\begin{proof}  Since $q(u)$ and $r(u)$ are both  
degree-$4$ invariant polynomials on $\irrep 3$,  they are scalar multiples 
of $p(u)$.  Comparing  $q(x \, y \, (x + y))$   and $p(x \, y \, (x + y))$ 
we conclude that $K_q = -1/48 \ne 0$.  
Choosing the basis 
$\{ e_1 = x^3 ,e_2 = x^2 y,e_3 = x y^2 ,e_4 = y^3 \}$ of $\irrep 3$ and 
the basis $\{ \gn 1 =\gz(x,y), \gn 2 = \gp(x,y), \gn 3 = \gm(x,y) \}$ of 
$\liesltc$, we compute  $r(x \, y \, (x + y)) = - 6 \,  p(x \, y \, (x + y))$.  
So $K_r = - 6 \ne 0$ for this choice of bases.  A change of basis in 
$\irrep 3$ induces multiplication of 
 $e_1 \wedge e_2 \wedge e_3 \wedge e_4$ by a nonzero scalar, 
and a change of basis in 
$\liesltc$ induces multiplication of 
$\sqb {\gn 1} u \wedge \sqb {\gn 2} u \wedge \sqb {\gn 3} u \wedge u$ 
by a nonzero scalar, so $K_r$ remains nonzero under change of bases.    
\end{proof}

The  $\sltc$ orbit structure 
on the projectivization $P(\irrep n)$ of 
an irreducible representation $\irrep n$ 
is well-known, see e.g. \cite{MFK} for details.  
Our interest is in the three-dimensional  
projective space $Z = P(\irrep 3)$, which is the quotient of 
$\irrep 3^* := \irrep 3 - \{ 0 \}$ by the equivalence relation 
$$ u \sim \lambda \, u, \qquad \lambda \in \comp^* := \comp - \{ 0 \}. $$
Denoting the quotient map 
by  $\delta: \irrep 3^* \to P(\irrep 3)$,   
a point $q \in P(\irrep 3)$ is an equivalence class 
$q = \delta(u)$, where $u \in \irrep 3^*$ is unique up  
to scalar multiplication.  
There are exactly three 
$\sltc$ orbits:  

\begin{itemize}
\item 
A one-dimensional orbit
consisting of the points $q = \delta(a^3)$, where 
$a \in \irrep 1$ satisfies $a \ne 0$.  
\item 
 A two-dimensional orbit consisting 
of the points $q = \delta (a^2 \, b)$, where 
$a,b \in \irrep 1$ satisfy $\cbra a b {} \ne 0$ (this is equivalent 
to linear independence).    
\item
A three-dimensional orbit consisting of 
the points $q = \delta (a \, b \, c )$, 
where $a, b, c \in \irrep 1$ satisfy $\cbra a b {} \ne 0$, 
$\cbra b c {} \ne 0$, and $\cbra c a {} \ne 0$.  
\end{itemize}

It follows from Lemma \ref{thm:5.1} part 1 that the degree-four hypersurface 
 $Y \subset Z = P(\irrep 3)$ cut out by the 
homogeneous polynomial $p$ is the union of the one-dimensional 
orbit and the two-dimensional orbit.  
The complement $Z - Y = \sltc/\Gamma$ is an $\sltc$ orbit.    
The stabilizer $\Gamma \subset \sltc$  
is the preimage under 
the quotient map $\sltc \to P\sltc$ of a group 
isomorphic to the symmetric group $S_3$, so $\Gamma$ is a finite 
group of order $12$ 
(isomorphic to the ``binary dihedral group" $\tilde D_3$ \cite{H2}.) 

An explicit formula for the meromorphic one-form $\alpha^{-1}$ 
will follow easily from: 

\begin{lemma} 
\label{thm:5.2} 
Suppose that $u$ and $v$ are vectors 
in $\irrep 3$, and that $p(u) \ne 0$.  Then there exists a unique 
vector $c = \alie + \beta \in \liesltc \oplus \comp$ such that  
\begin{align} \sqb \alie u + \beta \, u = v . \label{eq:5.3}
\end{align}
The vector $c = c(u,v) = \alie(u,v) + \beta(u,v)$ is given by the formulae 
$$ \alie(u,v) = {\hlie(u,v) \over p(u)}, \qquad
\beta(u,v) = {\sigma(u,v) \over p(u)}, 
$$
where 
\begin{align}
\hlie(u,v) &= 
\textstyle{1 \over 5} \cbra {\cbra {u^2} u 3} v 2 - \textstyle{2 \over 35}  
\cbra {\cbra {u^2} u 2} v 3 \in \irrep 2 = \liesltc, \nonumber \\
\sigma(u,v) &=  \cbra {\cbra {u^2} u 3} v {} \in \irrep 0 = \comp. \nonumber
\end{align}

\end{lemma}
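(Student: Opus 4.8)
The plan is to reduce the assertion to a statement about a single linear map and then prove that map is invertible exactly when $p(u)\ne 0$. Fix $u\in\irrep 3$ with $p(u)\ne 0$, and consider the linear map
\begin{equation*}
\Phi_u:\ \liesltc\oplus\comp\ \longrightarrow\ \irrep 3,\qquad
(\alie,\beta)\ \longmapsto\ \sqb\alie u+\beta\,u.
\end{equation*}
Both source and target have dimension $4$, so it suffices to show $\Phi_u$ is injective (equivalently surjective) under the hypothesis $p(u)\ne0$. Injectivity is precisely the statement that the four vectors $\sqb{\gn1}u,\ \sqb{\gn2}u,\ \sqb{\gn3}u,\ u$ are linearly independent in $\irrep3$; by definition of $r(u)$ in Lemma \ref{thm:5.1} part 2, this determinant equals $r(u)=K_r\,p(u)$ with $K_r\ne0$, hence is nonzero precisely when $p(u)\ne0$. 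This establishes existence and uniqueness of $c=c(u,v)=\alie(u,v)+\beta(u,v)$ for every $v$, and shows $c$ depends linearly on $v$ and rationally on $u$ with denominator a power of $p(u)$.

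Next I would identify the inverse $\Phi_u^{-1}$ explicitly, which is where the stated formulae come from. The idea is that the first transvectants $\cbra{\cbra{u^2}u3}\cdot2$ and $\cbra{\cbra{u^2}u2}\cdot3$ (applied to the second argument $v$) are the natural $\sltc$-equivariant maps $\irrep3\to\irrep2$ built from $u$, since $\cbra{u^2}u3\in\irrep0=\comp$ is a scalar multiple of $p(u)$ times nothing new — wait, more carefully: $\cbra{u^2}{u}{3}$ lives in $\irrep{4}\otimes\irrep3$ shifted down, landing in $\irrep{4-3}$... I would track degrees via Clebsch-Gordan: $u^2\in\irrep6$, so $\cbra{u^2}u3:\irrep6\times\irrep3\to\irrep{6+3-6}=\irrep3$ gives a vector in $\irrep3$ depending on $u$, and then $\cbra{\ \cdot\ }v2:\irrep3\times\irrep3\to\irrep2$. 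Similarly $\cbra{u^2}u2\in\irrep5$ and $\cbra{\ \cdot\ }v3:\irrep5\times\irrep3\to\irrep{5+3-6}=\irrep2$. Likewise $\sigma(u,v)=\cbra{\cbra{u^2}u3}v{}$ pairs $\irrep3$ with $\irrep3$ via the $0$-th transvectant — no, via $\cbra\cdot\cdot{}$ which on $\irrep3$ is the third transvectant landing in $\irrep0$. So both $\hlie(u,v)$ and $\sigma(u,v)$ are manifestly equivariant in the pair $(u,v)$, bilinear-ish (quartic in $u$, linear in $v$), and take values in $\irrep2$ and $\irrep0$ respectively. The candidate inverse is $(\alie,\beta)=\bigl(\hlie(u,v)/p(u),\ \sigma(u,v)/p(u)\bigr)$.

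To finish, I would verify the identity $\Phi_u\bigl(\hlie(u,v)/p(u),\ \sigma(u,v)/p(u)\bigr)=v$, i.e.
\begin{equation*}
\sqb{\hlie(u,v)}{u}+\sigma(u,v)\,u=p(u)\,v\qquad\text{for all }u,v\in\irrep3.
\end{equation*}
Both sides are $\sltc$-equivariant maps $\irrep3\otimes\irrep3\to\irrep3$ that are quintic in $u$ and linear in $v$; by Clebsch-Gordan, $S^4(\irrep3)\otimes\irrep3\otimes\irrep3$ contains $\irrep3$ with small multiplicity, so the space of such maps is finite-dimensional, and the identity will hold in general once it is checked on a single generic pair — I would use $u=x\,y\,(x+y)$ (the same test vector used in Lemma \ref{thm:5.1}, for which $p(u)\ne0$) together with a basis of choices for $v$, reducing everything to the explicit Lie-algebra action formulae eqn.\,\ref{eq:4.8}. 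The only real work is this finite transvectant computation together with pinning down the two rational coefficients $\tfrac15$ and $-\tfrac2{35}$; I expect the main obstacle to be bookkeeping — correctly expanding the nested transvectants $\cbra{\cbra{u^2}u3}v2$ and $\cbra{\cbra{u^2}u2}v3$ and keeping the combinatorial factors from eqn.\,\ref{eq:4.2} straight — rather than anything conceptual, since the existence/uniqueness half is forced by the dimension count and Lemma \ref{thm:5.1}.
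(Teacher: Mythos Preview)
Your existence/uniqueness argument is exactly the paper's: both reduce to the linear independence of $\sqb{\gn 1}u,\sqb{\gn 2}u,\sqb{\gn 3}u,u$ via Lemma \ref{thm:5.1} part 2.

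For the explicit formula the paper takes a different route. Rather than verifying the identity $\sqb{\hlie(u,v)}{u}+\sigma(u,v)\,u=p(u)\,v$ directly, the paper first \emph{defines} $\hlie:=p(u)\,\alie(u,v)$ and $\sigma:=p(u)\,\beta(u,v)$, observes these are equivariant, and then uses Cramer's rule (wedge products) together with Lemma \ref{thm:5.1} part 2 to show that $p(u)\,c_i(u,v)$ is bihomogeneous of bidegree $(3,1)$. It then decomposes $S^3(\irrep 3)\otimes\irrep 3$ and applies Schur: the $\irrep 0$-valued maps form a one-dimensional space spanned by $\cbra{\cbra{u^2}u3}v{}$, the $\irrep 2$-valued maps a two-dimensional space spanned by $\cbra{\cbra{u^2}u3}v2$ and $\cbra{\cbra{u^2}u2}v3$, and the numerical coefficients are then fixed by a single evaluation (which the paper omits). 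The payoff of the paper's route is that the bidegree-$(3,1)$ step via Cramer's rule makes the Schur spaces as small as possible, so the form of the answer is forced before any computation.

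Your direct-verification route is legitimate, but two points need tightening. First, your degree count is off by one throughout: $\hlie(u,v)$ and $\sigma(u,v)$ are \emph{cubic} in $u$ (not quartic), and the identity you want is of bidegree $(4,1)$ (not $(5,1)$); this matters because it determines which Hom-space you are working in. Second, ``finite-dimensional space of equivariant maps'' plus ``check at a single $u_0$ and a basis of $v$'' is not automatic: the relevant space ${\rm Hom}_{\sltc}\bigl(S^4(\irrep 3)\otimes\irrep 3,\irrep 3\bigr)$ is three-dimensional, and you would still need to verify that evaluation at $u_0=x\,y\,(x+y)$ is injective on it. The cleanest fix is either to expand the identity symbolically in $(u,v)$, or to adopt the paper's Cramer-rule step to drop to bidegree $(3,1)$, where the Hom-spaces have dimensions $1$ and $2$ and a single evaluation genuinely suffices.
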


\begin{proof}
Since $p(u) \ne 0$,   Lemma \ref{thm:5.1} part 2 implies that 
$ \{ \sqb {\gn 1} u , \sqb {\gn 2} u , \sqb {\gn 3} u , u \}$ is a 
basis of $\irrep 3$, so $v$ can be expressed uniquely as a linear 
combination 
\begin{align}
v &= 
c_1 \, \sqb {\gn 1} u  + c_2  \sqb {\gn 2} u + c_3\,  \sqb {\gn 3} u + c_4 \, u, \nonumber \\
\label{eq:5.4}
\end{align}
with complex coefficients $c_i = c_i(u,v)$.  
Defining  
$\alie := c_1 \, \gn 1 + c_2 \, \gn 2 + c_3 \, \gn 3$ and $\beta := c_4$ proves 
the first assertion.

The functions defined by 
\begin{align}
 \hlie(u,v) &:=  p(u) \, \alie(u,v) = p(u) \left( c_1(u,v) \, \gn 1 + 
c_2(u,v) \, \gn 2 + c_3(u,v) \, \gn 3 \right)  
\in \liesltc = \irrep 2, \nonumber \\
\sigma(u,v) &:= p(u) \, \beta(u,v) =   p(u) \, c_4(u,v) \in \comp = \irrep 0.   
\nonumber
\end{align}
are equivariant  because 
eqn.\,\ref{eq:5.3} is equivariant and $p$ is invariant.   
We now determine the homogeneity properties of these equivariant 
functions.   
From eqn.\,\ref{eq:5.4} and the antisymmetry 
of the wedge product we have 
\begin{align}
v \wedge \sqb {\gn 2} u \wedge \sqb {\gn 3} u \wedge u &= 
c_1(u,v) \, \sqb {\gn 1} u \wedge \sqb {\gn 2} u \wedge \sqb {\gn 3} u \wedge u, \nonumber \\
 \sqb {\gn 1} u \wedge v \wedge\sqb {\gn 3} u \wedge u &=
c_2(u,v) \, \sqb {\gn 1} u \wedge \sqb {\gn 2} u \wedge \sqb {\gn 3} u \wedge u 
\nonumber
\end{align}
and similar expressions involving  $c_3$ and $c_4$ (this is 
just Cramer's rule), and 
using Lemma \ref{thm:5.1} part 2 we obtain 
\begin{align}
v \wedge \sqb {\gn 2} u \wedge \sqb {\gn 3} u \wedge u &= 
 K_r \,  p(u) \,c_1(u,v)\, 
e_1 \wedge e_2 \wedge e_3 \wedge e_4  
, \nonumber \\
 \sqb {\gn 1} u \wedge v \wedge\sqb {\gn 3} u \wedge u &=
K_r  \,  p(u) \, c_2(u,v)\, 
e_1 \wedge e_2 \wedge e_3 \wedge e_4 
\label{eq:5.5}
\end{align}
and similar expressions for $c_3$ and $c_4$.  
The left-hand side of each equation eqn.\,\ref{eq:5.5} is clearly 
bihomogeneous of bidegree $(3,1)$ in the variables $(u,v)$, 
so $p(u) \, c_i(u,v)$  must be bihomogeneous of bidegree $(3,1)$, as 
must the linear combinations $\hlie(u,v)$ and $\sigma(u,v)$.  

To analyze equivariant bihomogeneous maps of bidegree $(3,1)$ we use the 
decomposition into irreducibles 
\begin{align}
S^3(\irrep 3) \otimes \irrep 3 
&\simeq \irrep 0 \oplus 2 \, \irrep 2 \oplus 2 \irrep 4 \oplus 
3\, \irrep 6 \oplus 2 \, \irrep 8 \oplus \irrep {10} \oplus \irrep {12} , 
\nonumber
\end{align}
and Schur's lemma.  
The vector space of 
equivariant bidegree $(3,1)$ maps $\irrep 3 \times \irrep 3 \to \irrep 0$ 
then has dimension one, and one checks that 
$\cbra {\cbra {u^2} u 3} v {}$  is a basis, 
so $\sigma(u,v)$ must be scalar multiple. 
Similarly, the vector space of equivariant 
bidegree $(3,1)$ maps $\irrep 3 \times \irrep 3 \to \irrep 2$ 
has dimension two, and one checks that 
$ \{ \cbra {\cbra {u^2} u 3} v 2,   \cbra {\cbra {u^2} u 2} v 3 \}$ is 
a basis, so $\hlie(u,v)$ must be a linear combination.  We omit the 
calculation of the numerical coefficients.  
\end{proof}

We identify the 
tangent bundle $TP(\irrep 3)$ with the quotient of 
$\irrep 3^* \times \irrep 3$ 
by the equivalence relation 
\begin{align}
 (u,v) \sim (\lambda\, u,\lambda \, v+ a \,u), 
\qquad \lambda \in \comp^*,\quad
 a \in \comp, \label{eq:5.6}
\end{align}
and we denote the quotient map by 
 $\gamma:\irrep 3^* \times \irrep 3 \to TP(\irrep 3)$, 
and an equivalence class by $\gamma(u,v) \in TP(\irrep 3)$.  
The main result of this section is: 

\begin{proposition} 
\label{thm:5.3} 
\begin{align}
\alpha^{-1}: TP(\irrep 3) & \to P(\irrep 3) \times \liesltc \nonumber \\
\gamma\left( u, v \right) &\mapsto \left( \delta(u), \alie(u,v) \right) 
\label{eq:5.7}
\end{align}
\end{proposition}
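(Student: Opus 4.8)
The plan is to verify directly that the rational assignment displayed in eqn.\,\ref{eq:5.7} is well defined on the open orbit $Z - Y = \{\,\delta(u) : p(u)\ne 0\,\}$ and that there it is a two-sided inverse of the linearization map $\alpha$; since it has already been noted that $\alpha_q\colon\liesltc\to T_qZ$ is a linear isomorphism for every $q\in Z-Y$, a two-sided inverse is unique, so this pins down $\alpha^{-1}$ on $Z-Y$. The explicit formula $\alie(u,v)=\hlie(u,v)/p(u)$ from Lemma \ref{thm:5.2} then exhibits the resulting $\liesltc$-valued one-form as meromorphic on all of $Z$, with poles exactly along the divisor $\{p=0\}=Y$, i.e.\ logarithmic.

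First I would check well-definedness. The first component $\delta(u)$ depends only on the line $\comp\,u$, hence only on the class $\gamma(u,v)$. For the second component, recall from Lemma \ref{thm:5.2} that $\alie(u,v)+\beta(u,v)$ is the unique $c=\alie+\beta\in\liesltc\oplus\comp$ with $\sqb\alie u+\beta\,u=v$ whenever $p(u)\ne 0$. Applying this to the pair $(\lambda u,\lambda v+a\,u)$ of the equivalence relation eqn.\,\ref{eq:5.6} and dividing the defining equation by $\lambda$, one finds that the solution for $(\lambda u,\lambda v+a\,u)$ has $\liesltc$-component equal to $\alie(u,v)$ and scalar component equal to $\beta(u,v)+a/\lambda$. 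Thus $\alie(u,v)$ is invariant under eqn.\,\ref{eq:5.6}, and $\gamma(u,v)\mapsto(\delta(u),\alie(u,v))$ descends to the restriction of $TP(\irrep 3)$ to $Z-Y$.

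Next I would identify $\alpha$ in the chosen models of the tangent bundle. Differentiating the orbit curve $t\mapsto\delta(\exp(t\xi)\cdot u)$ at $t=0$, and using that a curve of the form $\delta(\phi(t))$ has velocity $\gamma(\phi(0),\phi'(0))$ under the quotient map $\gamma$, gives $\alpha(\delta(u),\xi)=\gamma(u,\sqb\xi u)$, where $\sqb\xi u$ is the linearized $\sltc$-action on $\irrep 3$. With this both compositions follow from Lemma \ref{thm:5.2}: on one hand $\alpha^{-1}\bigl(\alpha(\delta(u),\xi)\bigr)=\alpha^{-1}\bigl(\gamma(u,\sqb\xi u)\bigr)=(\delta(u),\alie(u,\sqb\xi u))$, and since $\sqb\xi u+0\cdot u=\sqb\xi u$ the uniqueness clause forces $\alie(u,\sqb\xi u)=\xi$; on the other hand $\alpha\bigl(\alpha^{-1}(\gamma(u,v))\bigr)=\alpha(\delta(u),\alie(u,v))=\gamma(u,\sqb{\alie(u,v)}u)$, and since $\sqb{\alie(u,v)}u=v-\beta(u,v)\,u$ by Lemma \ref{thm:5.2}, the vector $\sqb{\alie(u,v)}u$ is equivalent to $v$ under eqn.\,\ref{eq:5.6} (take $\lambda=1$, $a=-\beta(u,v)$), so $\gamma(u,\sqb{\alie(u,v)}u)=\gamma(u,v)$.

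All the substantive work is in Lemma \ref{thm:5.2}, so the only point needing care here is bookkeeping: correctly reading off $\alpha$ in the realization of $TP(\irrep 3)$ as the quotient of $\irrep 3^*\times\irrep 3$ by eqn.\,\ref{eq:5.6}, and keeping the two equivalence relations (eqn.\,\ref{eq:5.6} for the fibre direction and the scaling relation $u\sim\lambda u$ defining $\delta$) straight, so that the scalar component $\beta$ is seen to be exactly the gauge freedom that the passage to $TP(\irrep 3)$ quotients out. Once $\alpha(\delta(u),\xi)=\gamma(u,\sqb\xi u)$ is established there is no further obstacle.
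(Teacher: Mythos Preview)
Your proof is correct and follows essentially the same approach as the paper: identify $\alpha(\delta(u),\xi)=\gamma(u,\sqb{\xi}{u})$ and then invoke Lemma~\ref{thm:5.2} together with the equivalence relation eqn.\,\ref{eq:5.6} to see that the proposed formula inverts $\alpha$ on $Z-Y$. You are in fact more thorough than the paper, which only checks the composition $\alpha\circ\alpha^{-1}$ and does not spell out well-definedness under eqn.\,\ref{eq:5.6}; your additional verification of $\alpha^{-1}\circ\alpha$ via the uniqueness clause of Lemma~\ref{thm:5.2} is a nice touch.
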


\begin{proof} The vector bundle morphism eqn.\,\ref{eq:2.1} 
defined by the linearization of the $\sltc$ action on $Z = P(\irrep 3)$ 
takes the form 
\begin{align}
\alpha: P(\irrep 3) \times \liesltc & \to 
TP(\irrep 3) \nonumber \\
\left( \delta(u), \glie \right) &\mapsto 
\gamma\left( u, \sqb \glie u \right) . 
\label{eq:5.8}
\end{align} 
We  need to show that the composition of 
the maps eqn.\,\ref{eq:5.8} and eqn.\,\ref{eq:5.7} is the identity on each fiber over 
$Z-Y$, which 
is equivalent to the statement   
$$ \alpha \circ \alpha^{-1}: 
\gamma\left( u, v \right) \mapsto 
\gamma\left( u, v \right) 
$$
whenever $p(u) \ne 0$.  Substituting from eqn.\,\ref{eq:5.7} and eqn.\,\ref{eq:5.8},   
\begin{align}
\alpha \circ \alpha^{-1} : 
\gamma\left( u, v \right) &\mapsto \gamma\left( u, \sqb{\alie(u,v)} v \right), 
\nonumber
\end{align}
and using eqn.\,\ref{eq:5.3} and eqn.\,\ref{eq:5.6}, we obtain the desired result: 
\begin{align}
 \gamma\left( u, \sqb{\alie(u,v)} v \right) &= 
\gamma\left( u, v - \beta(u,v) \, u \right) \nonumber \\
&= \gamma\left( u, v \right) . \qed \nonumber
\end{align}
\renewcommand{\qedsymbol}{} 
\end{proof}

\section{PVI Solutions via Isomonodromic Deformations}
\label{sec:pi}

\subsection{Jimbo-Miwa}
\label{sec:jm}

The relationship between $P_{VI}$ and a certain class of isomonodromic deformations 
has its origins in the work of R. Fuchs \cite{F} in the early part of 
this century, we refer to \cite{IKSY} for a modern discussion.  
Following Hitchin \cite{H2,H1}, we will use a formula of 
Jimbo and Miwa \cite{JM} to extract Painlev\'e solutions from 
the isomondromic deformations discussed in the previous section. 
  
We start with a lift $E \to Z$ of the $\sltc$ action to a rank-two 
vector bundle.  
Consider a   
parametrized rational curve $\kappa: \comp P^1 \to Z$ which satisfies:
\begin{enumerate}
\item 
The underlying rational curve $\kappa(\comp P^1)$ intersects  
the hypersurface $Y$ transversely in $Z$.  
\item 
The pullback 
bundle $\kappa^* E \to \comp P^1$  admits a holomorphic  
trivialization $\kappa^*E \buildrel \simeq \over \rightarrow \comp P^1 
\times V$.
\item 
The preimage $\kappa^{-1}(Y) = \{ 0,1,t,\infty \}$. 
(Note that $t$ is the cross-ratio of the four points.)  
\end{enumerate}
Then express the pullback connection as 
$\kappa^* \nabla  = d + A \, dz $, where $A$ is a meromorphic 
zero-form taking taking values in the vector space $End_0(V)$ of 
traceless endomorphisms of $V$.  Now if 
$A_u \in End_0(V)$ denotes the residue of the 
meromorphic one-form $A \, dz$ at $u \in \comp P^1$, we have 
\begin{align}
A(z) &=  
 {A_0 \over z} + { A_1 \over z-1} +  {A_t \over z - t}
\nonumber \\
&= { \left( A_0 + A_1 + A_t \right) \, z^2 - 
\left( (1+t)\, A_0 + t \, A_1 + A_t \right) \, z + t\, A_0  
\over
z(z-1)(z-t) },  
\label{eq:3.1}
\end{align}
and since the sum  residues  must vanish, 
the pole at $z = \infty$ has residue  
\begin{align*}
 A_\infty = - \left( A_0 + A_1 + A_t \right) \in End_0(V). 
 \end{align*}
Now suppose that the eigenvectors of $A_\infty$ are $\{ k, -k \}$ with $k$ real and positive, 
and let $\{ r^+, r^- \}$ be nonzero  
eigenvectors $A_\infty \, r^\pm = \pm k  \, r^\pm $.  
It is evident from eqn.\,\ref{eq:3.1} that there is (generically) exactly one value of 
$z \in \comp$ such that 
$r^\pm$ is an eigenvector of 
$A(z)$, which will be denoted as $\lambda^\pm := z$.  

Now consider a (holomorphic) 
family of parametrized rational curves $\kappa_{(w)}$ indexed  
by $w$ in an open subset of $\comp$, such that every curve in the 
family satisfies conditions i), ii), and iii) above, and such that 
the cross-ratio $t(w)$ is nonconstant.  Then expressing the function 
$\lambda^\pm(w)$ implicitly as the (generally multi-valued) function  
$\lambda^\pm(t)$, we have:  

\begin{proposition} 
\label{thm:3.1}
 (Jimbo and Miwa \cite{JM}) 
The function $\lambda(t) = \lambda^\pm(t)$ is a solution of the 
classic Painlev\'e VI  $P'_{VI}(C)$, eqn.\, \ref{eq:1.1c},
with parameters 
\begin{equation*}C= \left(\alpha,\beta,\gamma,\delta \right) = \left( {\textstyle{1 \over 2} \left( \pm 2\,k - 1 \right)^2 },
 2 \, \det A_0,
-2 \, \det A_1,
\textstyle{1 \over 2}{\left(1 + 4 \, \det A_t \right) }   \right) \in \comp^4.  
\end{equation*}
\end{proposition}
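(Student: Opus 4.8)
The plan is to reduce the statement to the classical correspondence of Jimbo and Miwa \cite{JM} between isomonodromic deformations of rank-two Fuchsian systems on $\comp P^1$ with four regular singular points and the Painlev\'e VI equation, and then to carry out the parameter bookkeeping. First I would check that the hypotheses of \cite{JM} are met. Condition (i), together with the fact that $\nabla$ is logarithmic with polar divisor $Y$ — which is transparent from Proposition \ref{thm:5.3}, since $\alie(u,v) = \hlie(u,v)/p(u)$ has a simple pole precisely along $\{ p(u) = 0 \} = Y$ — ensures that $\kappa_{(w)}^*\nabla$ has only simple poles on $\comp P^1$; condition (iii) pins these poles to $\{ 0, 1, t(w), \infty \}$; and condition (ii) lets us trivialize $\kappa_{(w)}^* E$ and write $\kappa_{(w)}^*\nabla = d + A(z)\, dz$ globally, with $A$ of the partial-fractions form eqn.\,\ref{eq:3.1}. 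Since every curve in the family pulls back the same flat connection on the fixed complement $Z - Y$, the monodromy representation is constant up to conjugation — this is the content of Section \ref{sec:biglog} — so $w \mapsto (A_0, A_1, A_t)$ is an isomonodromic family, and by the general theory (Malgrange \cite{Ml}) it satisfies the Schlesinger equations in the deformation variable $t = t(w)$, which is nonconstant by assumption.

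Next I would identify $\lambda^\pm$ with the Jimbo--Miwa apparent-singularity variable. After diagonalizing $A_\infty = \mathrm{diag}(k,-k)$ one has $r^+ = e_1$ and $r^- = e_2$, so ``$r^\pm$ is an eigenvector of $A(z)$'' becomes the vanishing of the $(2,1)$-entry, respectively the $(1,2)$-entry, of $A(z)$. In this gauge the $z^2$-coefficient of the numerator in eqn.\,\ref{eq:3.1} is $-A_\infty$, which is diagonal, so each off-diagonal entry of the numerator is linear in $z$ with a single zero; that zero is the $\lambda^\pm := z$ of the statement, and for the $(1,2)$-entry it coincides with the standard Jimbo--Miwa $\lambda$, while the $(2,1)$-entry gives the ``conjugate'' solution obtained by exchanging $r^+ \leftrightarrow r^-$. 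By \cite{JM} each of $\lambda^\pm(t)$ therefore solves classic Painlev\'e VI eqn.\,\ref{eq:1.1c} for suitable constants, which is the assertion once the constants are pinned down.

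Finally I would read off the parameters. Each $A_i$, $i \in \{ 0, 1, t \}$, is traceless, so its eigenvalues are $\pm\sqrt{-\det A_i}$ and the local exponent differences at $0,1,t$ are $2\sqrt{-\det A_0}$, $2\sqrt{-\det A_1}$, $2\sqrt{-\det A_t}$, while the exponent difference at $\infty$ is $2k$, with the sign attached to it governed by whether one tracks $r^+$ or $r^-$. Substituting these into the Jimbo--Miwa dictionary from local exponents to the $(\alpha,\beta,\gamma,\delta)$ of classic Painlev\'e VI — with the customary shift $\theta_\infty \mapsto \theta_\infty \mp 1$ at infinity and the distinguished role of the moving point $t$ (which is why $\delta$ carries the $\tfrac12(1 + 4 \det A_t)$ shift rather than appearing symmetrically) — yields exactly $C = \bigl( \tfrac12(\pm 2k - 1)^2,\ 2\det A_0,\ -2\det A_1,\ \tfrac12(1 + 4\det A_t) \bigr)$.

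I expect the main obstacle to be precisely this last translation: there is no canonical normalization of Painlev\'e VI, and \cite{JM} works in a gauge and sign convention that must be carefully matched to the ``classic'' form eqn.\,\ref{eq:1.1c} used here. The $-1$ inside $(\pm 2k - 1)^2$, the opposite signs of $\beta$ and $\gamma$, and the $+1$ in $1 + 4\det A_t$ are all artifacts of that matching, and tracking them consistently — along with the genericity hypothesis that makes $\lambda^\pm(z)$ well defined as the unique zero of a linear polynomial — is where the real work lies. The structural input, namely isomonodromy forced by the fixed geometry of $Z$ together with the Jimbo--Miwa theorem, is essentially formal given Section \ref{sec:biglog} and Proposition \ref{thm:5.3}.
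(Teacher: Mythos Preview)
Your sketch is reasonable, but you should note that the paper does not actually prove this proposition: it is stated as a result of Jimbo and Miwa and attributed to \cite{JM}, with no proof given beyond the citation. What you have written is in effect a roadmap through the Jimbo--Miwa machinery together with the parameter translation into the ``classic'' form eqn.\,\ref{eq:1.1c}; this is exactly the content the paper outsources to \cite{JM}, so in that sense your approach and the paper's coincide --- the paper just declines to unpack it. Your caveat about the normalization bookkeeping is apt: the shifts and sign flips in $(\alpha,\beta,\gamma,\delta)$ are precisely where a careless reader of \cite{JM} would go wrong, and the paper sidesteps this entirely by immediately specializing to the case where all four residues have eigenvalues $\pm s/4$ (the Corollary following the proposition), which is all that is needed downstream.
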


\noindent We will only need the following special case, translated in terms of the parameters of $P_{VI}(\theta)$, eqn.\,\ref{eq:1.1theta}: 

\begin{corollary}  If each of the four residues $A_0$, $A_1$, $A_t$ and $A_\infty$ has eigenvalues $\{ \tfrac{s}{4}, -  \tfrac{s}{4} \}$ with 
$s$ real and positive, then: 
\begin{enumerate}
\item
$\lambda^+(t)$ solves $P_{VI}(\theta)$ with $\theta = s \mu =\left( \tfrac{s}{2},\tfrac{s}{2},\tfrac{s}{2},\tfrac{s}{2}\right) \in \comp^4$.  
\item
$\lambda^-(t)$ solves $P_{VI}(\theta)$ with $\theta = -s \mu =\left( -\tfrac{s}{2},-\tfrac{s}{2},-\tfrac{s}{2},-\tfrac{s}{2}\right) \in \comp^4$. 
\end{enumerate}
\end{corollary}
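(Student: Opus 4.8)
The plan is to obtain the Corollary as a direct specialization of Proposition \ref{thm:3.1}, so the work is entirely bookkeeping between the two parametrizations of Painlev\'e VI. First I would observe that the hypothesis pins down the positive eigenvalue of $A_\infty$ as $k = \tfrac{s}{4}$, which is exactly the ``$k$ real and positive'' requirement needed to apply Proposition \ref{thm:3.1} (the conditions i)--iii) on the family of rational curves are inherited from the ambient Jimbo--Miwa setup in which $\lambda^\pm(t)$ is defined). Next, because each of $A_0, A_1, A_t$ lies in $End_0(V)$ with $\dim V = 2$ and has eigenvalues $\{\tfrac{s}{4}, -\tfrac{s}{4}\}$, its determinant is the product of those eigenvalues, so $\det A_0 = \det A_1 = \det A_t = -\tfrac{s^2}{16}$.

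Substituting these into the formula of Proposition \ref{thm:3.1} gives the classic parameters
\begin{equation*}
C = \left( \tfrac{1}{2}\bigl( \pm \tfrac{s}{2} - 1 \bigr)^2,\ -\tfrac{s^2}{8},\ \tfrac{s^2}{8},\ \tfrac{1}{2}\bigl( 1 - \tfrac{s^2}{4} \bigr) \right),
\end{equation*}
the $+$ sign going with $\lambda^+$ and the $-$ sign with $\lambda^-$. It then remains to check that this $C$ matches the dictionary $C = \bigl( \tfrac{1}{2}(\theta_4-1)^2,\ -\tfrac{1}{2}\theta_1^2,\ \tfrac{1}{2}\theta_3^2,\ \tfrac{1}{2}(1-\theta_2^2) \bigr)$ from the text, with $\theta = s\mu$ in the $+$ case and $\theta = -s\mu$ in the $-$ case. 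This is immediate component by component: the $\beta, \gamma, \delta$ entries depend only on the squares $\theta_1^2 = \theta_2^2 = \theta_3^2 = \tfrac{s^2}{4}$ and so are insensitive to the overall sign, while for $\alpha$ the choice $\theta_4 = \tfrac{s}{2}$ yields $\tfrac{1}{2}(\tfrac{s}{2}-1)^2$ and $\theta_4 = -\tfrac{s}{2}$ yields $\tfrac{1}{2}(-\tfrac{s}{2}-1)^2$, matching the two sign branches. Since $P_{VI}(\theta)$ depends on $\theta$ only through $C$, both assertions follow.

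I do not expect a genuine obstacle here; the one subtlety worth flagging is the square-root ambiguity in recovering $\theta_4$ from $\alpha = \tfrac{1}{2}(\theta_4-1)^2$. This means $\theta$ is not determined by the equation --- only that the specific representatives $\pm s\mu$ solve it --- so the argument should exhibit these representatives rather than claim uniqueness. This is precisely the redundancy later resolved by the Okamoto reflections $R_1,\dots,R_5$. I would also remark that the reality and positivity of $s$ enter only to conform to the sign conventions of Proposition \ref{thm:3.1}; the underlying algebraic identity between $C$ and $\pm s\mu$ holds formally for any $s$.
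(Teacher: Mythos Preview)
Your proposal is correct and follows exactly the approach implicit in the paper: the Corollary is an immediate specialization of Proposition \ref{thm:3.1}, obtained by computing $\det A_j = -s^2/16$ from the eigenvalue hypothesis and then translating the resulting classic parameters $C$ back to $\theta = \pm s\mu$ via the dictionary given in Section \ref{sec:painleve}. The paper itself does not spell out this translation in a proof, treating the Corollary as evident, so your detailed verification is more than sufficient.
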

\noindent  With our notation, the conclusion of the Corollary becomes: 
  $\Lambda^\pm:= \left[ \lambda^\pm(t); \pm s \mu \right]$ solves $P_{VI}$.  
The corollary applies to the isomonodromic deformation used to construct the Painlev\'e solutions $\Lambda^\pm_m$ of Theorem \ref{thm:1.1}. 
Here each of the four residues $A_0$, $A_1$, $A_t$ and $A_\infty$ has eigenvalues $ \pm \tfrac{s}{4}= \pm\tfrac{1}{4}( 2m + 1)$.  
We will see this from our computations below when  $m = 0$, and refer to Manasliski \cite{Man1}  for $m > 0$.

\subsection{Deforming the Cross-Ratio}

We now construct a family of parametrized 
rational curves $\kappa_{(w)}: \comp P^1 \to P(\irrep 3)$ which satisfy 
the necessary conditions for the application of Proposition \ref{thm:3.1}. 
For each $w$,   
the preimage $\kappa^{-1}_{(w)}(Y)$ should consist of four points, 
and since the degree of the hypersurface $Y \subset P(\irrep 3)$ 
is four, $\kappa_{(w)}$ should be  a family of projective lines.  
The natural 
framework for studying families of projective lines with 
nonconstant cross-ratio $t$ is 
the Geometric Invariant Theory \cite{MFK} quotient of the Grassmannian, 
but for our purposes a more elementary approach suffices. 
     
A pair of linearly independent vectors 
$u$ and $v$ in $\irrep 3$ defines the parametrized projective line  
\begin{align}
\kappa_{u,v}: \comp P^1 &\to P(\irrep 3) \nonumber \\
             z &\mapsto \delta(u + z \, v),   
             \label{eq:6.1}
\end{align}
where $z \in \comp \cup \{ \infty \}= \comp P^1$, and 
$\delta(u + \infty \, v) := \delta(v)$.        

\begin{lemma}
\label{thm:6.1}   
For all but finitely many $w \in \comp P^1$, 
the parametrized projective line 
$\kappa_{(w)} := \kappa_{\tilde u(w), \tilde v(w)}: \comp P^1 \to P(\irrep 3)$ defined by 
the pair of vectors 
\begin{align}
\tilde u (w)  &:= -{ (w+1)\over (w+3)^3} \,(x+ y)^2 \, ( 8 \, x + (w^2 -1) \, y ), \qquad
\tilde v (w)  :=  x^2 y \nonumber
\end{align}
satisfies the properties (1)-(3) of section 3, with $t = t(w)$ 
given by eqn.\,\ref{eq:1.2}.   
\end{lemma}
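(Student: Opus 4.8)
The plan is to reduce the incidence condition $\kappa_{(w)}^{-1}(Y)=\{0,1,t,\infty\}$ to the vanishing of a single discriminant, to compute that discriminant explicitly, and to dispatch the remaining conditions by genericity. \emph{Reducing $Y$ to a discriminant:} First I would invoke Lemma~\ref{thm:5.1}(1): for $u=a\,b\,c$ with $a,b,c\in\irrep 1$ one has $p(u)=K_q^{-1}\bigl(\cbra a b {}\,\cbra b c {}\,\cbra c a {}\bigr)^2$ with $K_q\ne 0$, and since $\cbra \cdot \cdot {}$ vanishes on $\irrep 1\times\irrep 1$ exactly on proportional pairs, $p(u)$ is a nonzero scalar multiple of the classical discriminant of the binary cubic $u$. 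Hence $\delta(u)\in Y$ iff $u$ has a repeated linear factor, and the singular locus of $Y$ is exactly the one-dimensional orbit $\{\delta(a^3)\}$.

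\emph{Locating the four preimages:} Write $u_w:=\tilde u(w)$ and $v_w:=\tilde v(w)=x^2 y$. Since $v_w=x\cdot x\cdot y$ has the repeated factor $x$, the point $\kappa_{(w)}(\infty)=\delta(v_w)$ lies on $Y$; since $u_w$ is a nonzero multiple of $(x+y)^2\bigl(8x+(w^2-1)y\bigr)$ it has the repeated factor $x+y$ for $w^2\ne 9$, so $\kappa_{(w)}(0)=\delta(u_w)$ lies on $Y$. Next I would expand $u_w+v_w$ in the monomial basis $\{x^3,x^2y,xy^2,y^3\}$ and verify that its discriminant vanishes identically in $w$, giving $\kappa_{(w)}(1)\in Y$. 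Finally, because $p(v_w)=0$ the one-variable polynomial $z\mapsto p(u_w+z\,v_w)$ has degree at most $3$ in $z$; as it vanishes at $z=0$ and $z=1$ it equals $c(w)\,z(z-1)\bigl(z-t_0(w)\bigr)$ with $c(w)\not\equiv 0$, and I would read off $t_0(w)$ from the coefficients (equivalently, by Vieta, $t_0(w)$ is the product of the two nonzero roots) and simplify to obtain $t_0(w)=t(w)$, the function of eqn.~\ref{eq:1.2}. This gives $\kappa_{(w)}^{-1}(Y)=\{0,1,t(w),\infty\}$, which is property~(3); $t(w)$ is then automatically the cross-ratio of the four points, as noted in Section~\ref{sec:jm}.

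\emph{Transversality and the pullback bundle:} For property~(1), a projective line is nonsingular and $\kappa_{(w)}$ is an embedding off a finite set of $w$; the intersection divisor of $\kappa_{(w)}$ with $Y$ on $\comp P^1$ is the zero divisor of a section of $\clo(4)$, hence has degree $4$, and by the previous step it equals $[0]+[1]+[t(w)]+[\infty]$, which is reduced whenever $t(w)\notin\{0,1,\infty\}$ --- only finitely many $w$, by eqn.~\ref{eq:1.2}. A reduced intersection divisor forces every local intersection number to be $1$; combined with the fact that the four intersection points lie in the smooth locus of $Y$ (the first two visibly in the two-dimensional orbit; for the other two, a single evaluation of $w$ exhibits a double but not a triple root, so the relevant binary cubic has nonvanishing Hessian for generic $w$, putting it off the twisted cubic), this shows the line meets $Y$ transversely. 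For property~(2), if $E=E_0$ is trivial there is nothing to check; for $E=E_m$ with $m\ge 1$, Grothendieck's theorem and $c_1(E_m)=0$ give $\kappa_{(w)}^{*}E_m\cong\clo(a)\oplus\clo(-a)$, which is trivial exactly when $\kappa_{(w)}(\comp P^1)$ is not a jumping line, and jumping lines form a proper closed subset of the space of lines, so property~(2) holds for all but finitely many $w$ once one checks the family is not contained in that subset (the generic line of the family restricting to a semistable degree-zero bundle).

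\emph{Where the difficulty lies:} The main obstacle is the discriminant identity of the second step --- after clearing the scalar $-(w+1)/(w+3)^3$ correctly, one must confirm that $z=1$ is genuinely a root and that the remaining root is \emph{exactly} $t(w)$ of eqn.~\ref{eq:1.2} rather than a M\"obius image of it, and keeping the three prescribed intersection parameters pinned at $0,1,\infty$ is the delicate bookkeeping. A secondary point is property~(2) for $m\ge 1$: showing the chosen one-parameter family of lines avoids the jumping locus of $E_m$ is where the explicit equivariant monad of Appendix~\ref{sec:monads} and the $\sltc$-equivariance of the construction are needed.
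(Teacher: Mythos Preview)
Your proposal is correct and follows the same approach as the paper, which proves the lemma by directly computing and stating the single identity
\[
p\bigl(-(w+3)^3(\tilde u(w)+z\,\tilde v(w))\bigr)=-192\,(w+1)^2(w+3)^6\,z(z-1)\bigl((w-1)(w+3)^3 z-(w+1)(w-3)^3\bigr)
\]
and noting that the result follows immediately. Your version reaches the same factorization via the discriminant interpretation of $p$ and a root-by-root check, and is more explicit than the paper about transversality and about property~(2), both of which the paper leaves implicit in the identity and the genericity clause.
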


\begin{proof}The result follows immediately from the following 
identity:     
\begin{align}
p &\left( -(w+3)^3 \, (\tilde u (w)  + z \, \tilde v (w)) \right) \nonumber \\
&=  
- 192\, (w+1)^2 \,(w+3)^6 \, z\, (z -1) \, 
\left((w-1)\,(w+3)^3 \, z - (w+1)\,(w-3)^3 \right).  \nonumber
\end{align} 
The identity is verified by straightforward computation. 
\end{proof}

\subsection{Residue Computations}

In this section we use elementary complex analysis to study the 
meromorphic one-form $\kappa_{u,v}^* \alpha^{-1}$ on $\comp P^1$.  
This plays 
a central role in the construction of 
the meromorphic connection 
$\kappa_{u,v}^* \nabla$ on $\comp P^1$ which in turn is used 
to construct the  
data for Proposition \ref{thm:3.1}. 

Applying Proposition \ref{thm:5.3} to the parametrized rational curve 
defined in eqn.\,\ref{eq:6.1} immediately yields:     

\begin{corollary} 
\label{thm:7.1} 
The meromorphic $\liesltc$-valued one-form 
$\kappa_{u,v}^* \alpha^{-1}$ on $\comp P^1$ is given by: 
$$ \kappa_{u,v}^* \alpha^{-1} 
=  \alie(u+ z v, v) \, dz = {\hlie(u + z v, v) \over p(u + z v)} \, dz .  $$
\end{corollary}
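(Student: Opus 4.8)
The plan is to compute the pullback one-form $\kappa_{u,v}^*\alpha^{-1}$ directly from the definitions, using the description of $TP(\irrep 3)$ as the quotient $\gamma: \irrep 3^* \times \irrep 3 \to TP(\irrep 3)$ of eqn.\,\ref{eq:5.6} together with the explicit formula for $\alpha^{-1}$ supplied by Proposition \ref{thm:5.3}. Recall that, regarded as a meromorphic $\liesltc$-valued one-form, $\alpha^{-1}$ sends a tangent vector $\xi \in T_q P(\irrep 3)$ to its $\liesltc$-component under the trivialization $P(\irrep 3) \times \liesltc$; consequently $\kappa_{u,v}^*\alpha^{-1}$ assigns to $\partial/\partial z$ at a point $z$ the value $\alpha^{-1}\bigl(\tfrac{d}{dz}\kappa_{u,v}(z)\bigr) \in \liesltc$, and it suffices to evaluate this expression.

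First I would identify the velocity vector of the curve. Lifting $\kappa_{u,v}(z) = \delta(u + z v)$ to the affine path $z \mapsto u + z v$ in $\irrep 3^*$, whose ordinary derivative is the constant vector $v$, the standard relation between the quotient maps $\delta$ and $\gamma$ gives $\tfrac{d}{dz}\kappa_{u,v}(z) = \gamma(u + zv,\, v) \in T_{\delta(u+zv)}P(\irrep 3)$. Applying Proposition \ref{thm:5.3} to this tangent vector yields $\alpha^{-1}\bigl(\gamma(u+zv,v)\bigr) = \bigl(\delta(u+zv),\, \alie(u+zv,v)\bigr)$, hence $\kappa_{u,v}^*\alpha^{-1} = \alie(u + zv,\, v)\, dz$. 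Substituting the closed-form expression $\alie(u',v') = \hlie(u',v')/p(u')$ of Lemma \ref{thm:5.2}, valid whenever $p(u') \ne 0$, with $u' = u + zv$ and $v' = v$, produces the second asserted equality $\kappa_{u,v}^*\alpha^{-1} = {\hlie(u+zv,v) \over p(u+zv)}\, dz$.

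The one point needing a modicum of care — and hence the main, if routine, obstacle — is the behaviour at $z = \infty$: one must redo the velocity computation in the chart $z \mapsto 1/z$ using the compatibility $(u',v') \sim (\lambda u',\, \lambda v' + a u')$ of eqn.\,\ref{eq:5.6}, so that the two affine formulas patch into a single meromorphic one-form on all of $\comp P^1$; and one should observe that $p(u + zv)$ is not identically zero, which holds because $u$ and $v$ are linearly independent, so $\delta(u+zv) \notin Y = \{p = 0\}$ for all but finitely many $z$, and the displayed expression is therefore genuinely meromorphic rather than everywhere singular. Everything else is a purely formal unwinding of the quotient descriptions of $TP(\irrep 3)$ and of $\alpha^{-1}$, which is why the corollary follows immediately from Proposition \ref{thm:5.3}.
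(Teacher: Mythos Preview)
Your argument is correct and is exactly the unwinding the paper has in mind: the corollary is stated as an immediate consequence of Proposition~\ref{thm:5.3} applied to the curve $\kappa_{u,v}(z)=\delta(u+zv)$, and your computation of the velocity as $\gamma(u+zv,v)$ followed by Lemma~\ref{thm:5.2} is precisely that. The remarks about $z=\infty$ and $p(u+zv)\not\equiv 0$ are sound but more than the paper bothers to say.
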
 

\noindent Note that if $p(v) = 0$, then $p(u+ z v)$ is a degree-three 
polynomial in the variable $z$, and compare with eqn.\,\ref{eq:3.1}.  
To analyze the residues, consider instead the 
projective line $\kappa_{u,v}$ with  $p(u) = 0$,  and 
expand in a Laurent series at $z = 0$: 

\begin{lemma} 
\label{thm:7.2} 
Suppose that $u = a^2 b$ with $\cbra a b{} \ne 0$. 
Then $\kappa_{a^2 b,v}^* \alpha^{-1}$ has a simple pole at $z = 0$, and 
the residue can be read off from: 
$$ \alie(a^2 b + z v,v) \, dz = 
\left( \textstyle{-1\over 4}\, \gz(a,b)  \, z^{-1} + O(1) \right) \, dz . 
$$
\end{lemma}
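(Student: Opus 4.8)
The plan is to extract the residue directly from the defining relation eqn.\,\ref{eq:5.3} for $\alie$, rather than from the transvectant formula of Lemma \ref{thm:5.2}. By Corollary \ref{thm:7.1} we have $\kappa_{a^2b,v}^*\alpha^{-1}=\alie(a^2b+zv,v)\,dz$, so everything reduces to the behaviour of $\alie(a^2b+zv,v)$ near $z=0$. First I would observe that this has at most a simple pole there: in $\alie(w,v)=\hlie(w,v)/p(w)$ the numerator $\hlie(a^2b+zv,v)$ is a polynomial in $z$, while $p(a^2b+zv)$ is a polynomial in $z$ of degree at most $4$ vanishing at $z=0$, since $p(a^2b)=0$: writing $a^2b=a\cdot a\cdot b$ gives $q(a^2b)=(\cbra aa{}\,\cbra ab{}\,\cbra ba{})^2=0$ because $\cbra aa{}=0$, and $p=q/K_q$ with $K_q\ne 0$ by Lemma \ref{thm:5.1}(1). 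In the situation of interest $p(a^2b+zv)$ vanishes to order exactly one at $z=0$ (for the family of Lemma \ref{thm:6.1} this is visible from the explicit factorization of $p$ established there), so I may write
\begin{equation*}
\alie(a^2b+zv,v)=z^{-1}A_{-1}+A_{0}+O(z),\qquad \beta(a^2b+zv,v)=z^{-1}B_{-1}+B_{0}+O(z),
\end{equation*}
with $A_{-1},A_0\in\liesltc$ and $B_{-1},B_0\in\comp$; the residue we want is $A_{-1}$.

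Next I would substitute into eqn.\,\ref{eq:5.3}, namely $\sqb{\alie}{a^2b+zv}+\beta\,(a^2b+zv)=v$, and compare coefficients of powers of $z$. The $z^{-1}$-coefficient gives $\sqb{A_{-1}}{a^2b}+B_{-1}\,a^2b=0$, so $a^2b$ is an eigenvector of the action of $A_{-1}$ on $\irrep 3$. Expanding $A_{-1}=c_0\,\gz(a,b)+c_+\,\gp(a,b)+c_-\,\gm(a,b)$ in the basis eqn.\,\ref{eq:4.5} and using eqn.\,\ref{eq:4.8} with $(i,j)=(2,1)$,
\begin{equation*}
\sqb{A_{-1}}{a^2b}=c_0\,a^2b+c_+\,a^3+2\,c_-\,a\,b^2 ,
\end{equation*}
so (since $\cbra ab{}\ne 0$, the monomials $a^3,a^2b,ab^2,b^3$ are linearly independent) the eigenvector condition forces $c_+=c_-=0$. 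Hence $A_{-1}=c_0\,\gz(a,b)$, with eigenvalue $c_0$, and $B_{-1}=-c_0$.

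To pin down $c_0$ I would use the $z^0$-coefficient, $\sqb{A_{-1}}{v}+\sqb{A_0}{a^2b}+B_{-1}\,v+B_0\,a^2b=v$, which rearranges to
\begin{equation*}
\sqb{A_0}{a^2b}+B_0\,a^2b=(1+c_0)\,v-c_0\,\sqb{\gz(a,b)}{v}.
\end{equation*}
By eqn.\,\ref{eq:4.8}, as $(A_0,B_0)$ range over $\liesltc\oplus\comp$ the left-hand side sweeps out $\operatorname{span}\{a^3,a^2b,a\,b^2\}$, a three-dimensional subspace of $\irrep 3$ not containing $b^3$; hence the $b^3$-component of the right-hand side must vanish. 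Since $\gz(a,b)$ acts diagonally in the basis $\{a^3,a^2b,ab^2,b^3\}$ with $\sqb{\gz(a,b)}{b^3}=-3\,b^3$, that component equals $\bigl((1+c_0)+3c_0\bigr)v_3=(1+4c_0)\,v_3$, where $v_3$ is the $b^3$-coefficient of $v$. Provided $v_3\ne 0$ — equivalently $v\notin\operatorname{span}\{a^3,a^2b,a\,b^2\}$, which holds in the cases at hand because that span consists of polynomials divisible by $a$ whereas $v$ is not (e.g. $v=x^2y$, $a=x+y$ in the application) — we obtain $1+4c_0=0$, i.e. $c_0=-\tfrac{1}{4}$. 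Therefore $A_{-1}=-\tfrac{1}{4}\,\gz(a,b)\ne 0$, which both confirms that the pole at $z=0$ is exactly simple and yields the stated residue.

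I expect the only genuine subtlety to be the bookkeeping around simplicity of the pole and the genericity hypothesis $v_3\ne 0$; both follow in the application from the explicit identity in the proof of Lemma \ref{thm:6.1}, and neither involves real difficulty. A purely computational alternative that I would rather avoid is to read the residue off Lemma \ref{thm:5.2} as $\hlie(a^2b,v)/\bigl(4\,\cbra{(a^2b)^2}{(a^2b)\,v}{}\bigr)$ and then verify the transvectant identity $\hlie(a^2b,v)=-\tfrac{1}{4}\,\cbra{(a^2b)^2}{(a^2b)\,v}{}\,\gz(a,b)$ by hand.
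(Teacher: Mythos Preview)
Your argument is correct and the genericity hypothesis you isolate, $v_3\ne 0$, is exactly the paper's hypothesis $\cbra{v}{a^3}{}\ne 0$ (only the $b^3$-component of $v$ pairs nontrivially with $a^3$ under $\cbra{\cdot}{\cdot}{}$). The route, however, is genuinely different from the paper's. The paper works directly with the quotient $\alie=\hlie/p$: it computes the constant term of the numerator $\hlie(a^2b+zv,v)$ via the explicit transvectant formula of Lemma~\ref{thm:5.2}, obtaining $8\cbra ab{}^3\cbra v{a^3}{}\,\gz(a,b)$, and separately computes the linear term of the denominator $p(a^2b+zv)$ by differentiating $\cbra{(\,\cdot\,)^2}{(\,\cdot\,)^2}{}$, obtaining $-32\cbra ab{}^3\cbra v{a^3}{}\,z$; the residue $-\tfrac14\gz(a,b)$ then drops out as the ratio. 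Your approach bypasses transvectant identities entirely: you feed the Laurent ansatz for $(\alie,\beta)$ back into the \emph{defining} relation eqn.\,\ref{eq:5.3} and read off the residue by elementary linear algebra in the monomial basis $\{a^3,a^2b,ab^2,b^3\}$. What the paper's computation buys is the explicit constants in numerator and denominator separately (which it reuses later, e.g.\ in the proof of Lemma~\ref{thm:7.3}); what your approach buys is conceptual transparency and independence of the particular transvectant formula for $\hlie$. Incidentally, your method also shows directly that the pole order cannot exceed one under the hypothesis $v_3\ne 0$: running the same comparison one order lower would force any putative $z^{-2}$-coefficient to vanish.
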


\begin{proof} For brevity, we give the proof only for the 
``generic" case  $\cbra v {a^3}  3 \ne 0$.  
Compute the leading term in the Taylor series of the numerator, and simplify;  
\begin{align}
 \hlie(a^2 b + z\, v, v) &= 
 \hlie(a^2 b, v) + O(z) \cr
&= -8 \, \cbra a b {}^2 \, \cbra v {a^3}  {} \, a \, b + O(z) \nonumber \\
&= 8 \,  \cbra a b {}^3 \, \cbra v {a^3}  {} \, \gz(a,b)  + O(z).  \nonumber
\end{align}
Th zeroth order Taylor coefficient of the denominator 
is  $p(a^2 \, b)$, which vanishes by Lemma \ref{thm:5.1} part 1.  
Computing the derivative 
\begin{align}
 \textstyle{d \over dz} \, p(a^2 b + z \, v) &= 
\textstyle{d \over dz} \, \cbra {(a^2 b + z \, v)^2} {(a^2 b + z \, v)^2} {6} \nonumber \\
&= \cbra { 2\, (a^2 b + z \, v) \, v} {(a^2 b + z \, v)^2} {6} + 
\cbra {(a^2 b + z \, v)^2} {2\, (a^2 b + z \, v)\, v} {6} \nonumber \\
&=  \cbra { 2\, (a^2 b + z \, v) \, v} {(a^2 b + z \, v)^2} {}
+ (-1)^6 \, \cbra { 2\, (a^2 b + z \, v) \, v} {(a^2 b + z \, v)^2} {} \nonumber
\end{align}
and evaluating at $z=0$ gives the first-order Taylor coefficient, so 
\begin{align}
 p(a^2 b + z\, v) &=  4\, \cbra {a^2\, b \, v} {a^4 \, b^2} {} \, z 
+ O(z^2) \nonumber \\
&= -32 \cbra a b {}^3 \, \cbra v {a^3} \, z + O(z^2). \quad \qed \nonumber
\end{align}
\renewcommand{\qedsymbol}{}
\end{proof}

We now analyze the numerator $\hlie(u + zv, v)$ of the expression for 
$\kappa^*_{u,v} \alpha^{-1}$ given in  
Corollary \ref{thm:7.1}, compare with the numerator of eqn.\,\ref{eq:3.1}.  

\begin{lemma} 
\label{thm:7.3}
For fixed $u$, $a$, and $b$, the expression 
$$ 
\klie(z) :=
\hlie(u + z \, a^2 b, a^2 b)  + 
8 \cbra a b {}^3 \, \cbra u {a^3} {} \, \gz(a,b)\, z^2 
$$
is a degree-one ($\liesltc$-valued) 
polynomial function of the variable $z$.  
\end{lemma}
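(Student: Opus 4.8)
The plan is to treat $\klie(z)$ as a polynomial in $z$ and show that its coefficients of $z^2$ and $z^3$ both vanish. By the explicit formula of Lemma~\ref{thm:5.2}, $\hlie(w,v)$ is homogeneous of degree $3$ in $w$ and of degree $1$ in $v$, so $f(z):=\hlie(u+z\,a^2b,\,a^2b)$ is a polynomial in $z$ of degree at most $3$, say $f(z)=c_0+c_1z+c_2z^2+c_3z^3$, and $\klie(z)=f(z)+8\cbra ab{}^3\cbra u{a^3}{}\gz(a,b)\,z^2$. It therefore suffices to prove $c_3=0$ and $c_2=-8\cbra ab{}^3\cbra u{a^3}{}\gz(a,b)$. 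Both coefficients can be read off from the companion polynomial $g(\zeta):=\hlie(a^2b+\zeta\,u,\,a^2b)$: since $u+z\,a^2b=z\,(a^2b+\tfrac1z u)$ and $\hlie$ is homogeneous of degree $3$ in its first slot, $f(z)=z^3\,g(1/z)$, whence $g(\zeta)=c_3+c_2\zeta+c_1\zeta^2+c_0\zeta^3$. In particular $c_3=g(0)=\hlie(a^2b,a^2b)$, while $c_2=g'(0)$, the derivative of $\hlie$ in its first argument at $a^2b$ in the direction $u$.

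For $c_3$ the claim is that $\hlie(w,w)\equiv 0$. Indeed, applying Lemma~\ref{thm:5.2} with $v=u$, the unique $c=\alie+\beta$ solving $\sqb\alie u+\beta u=u$ is evidently $\alie=0$, $\beta=1$, so $\hlie(u,u)=p(u)\,\alie(u,u)=0$ whenever $p(u)\ne 0$; since $\hlie$ is a polynomial and $\{p\ne 0\}$ is Zariski dense in $\irrep 3$, it follows that $\hlie(u,u)=0$ for every $u$. (Equivalently, an $\sltc$-equivariant polynomial map $\irrep 3\to\irrep 2$ of degree $4$ must vanish, since $\irrep 2$ does not occur in $S^4(\irrep 3)\simeq\irrep 0\oplus\irrep 4\oplus\irrep 6\oplus\irrep 8$.) In particular $c_3=\hlie(a^2b,a^2b)=0$.

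For $c_2$ the idea is to polarize the identity just proved. Differentiating $\hlie(a^2b+\zeta\,u,\,a^2b+\zeta\,u)\equiv 0$ at $\zeta=0$, and using that $\hlie$ is linear in its second argument (so the derivative in that slot is simply $\hlie(a^2b,u)$), the chain rule gives $g'(0)+\hlie(a^2b,u)=0$, i.e. $c_2=-\hlie(a^2b,u)$. The computation already carried out in the proof of Lemma~\ref{thm:7.2} evaluates $\hlie(a^2b,u)=-8\cbra ab{}^2\cbra u{a^3}{}\,a\,b=8\cbra ab{}^3\cbra u{a^3}{}\,\gz(a,b)$ (both sides are linear in $u$, so the generic case treated there determines it for all $u$). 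Hence $c_2=-8\cbra ab{}^3\cbra u{a^3}{}\gz(a,b)$, the $z^2$ terms of $\klie$ cancel, and $\klie(z)=c_0+c_1z$ has degree at most one.

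The substantive step is the identification $c_2=-\hlie(a^2b,u)$ by polarizing $\hlie(w,w)\equiv 0$; this is exactly what lets one reuse the residue computation of Lemma~\ref{thm:7.2} in place of expanding by hand the iterated transvectants that define $\hlie$. A brute-force alternative would extract the $z^2$ coefficient of $\hlie(u+z\,a^2b,a^2b)$ directly from the formula of Lemma~\ref{thm:5.2}, reduce to $a=x$, $b=y$ by $\sltc$-equivariance, write $u=\alpha_0x^3+\alpha_1x^2y+\alpha_2xy^2+\alpha_3y^3$, and match the result against $-8\cbra xy{}^3\cbra u{x^3}{}\gz(x,y)=-48\,\alpha_3\,xy$; there the bookkeeping of the nested transvectants is the main obstacle, and the polarization argument is designed precisely to avoid it.
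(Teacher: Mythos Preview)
Your proof is correct. The route differs from the paper's in presentation, though both pivot on the same inversion $z\mapsto z^{-1}$ together with the residue computation of Lemma~\ref{thm:7.2}. The paper encodes the inversion geometrically via the one-form identity $\alie(u+z^{-1}v,v)\,dz^{-1}=\alie(v+zu,u)\,dz$ (eqn.~\ref{eq:7.1}), then feeds the Laurent expansion of $\alie(a^2b+zu,u)$ and of $p(a^2b+zu)$ from Lemma~\ref{thm:7.2} into that identity to see that $z^{2}\klie(z^{-1})=O(z)$. You instead work algebraically at the level of the polynomial $\hlie$: the homogeneity $f(z)=z^{3}g(1/z)$ replaces eqn.~\ref{eq:7.1}, the vanishing $\hlie(w,w)=0$ kills $c_{3}$ directly, and the polarization $c_{2}=-\hlie(a^{2}b,u)$ is the new ingredient that lets you invoke the already-computed value $\hlie(a^{2}b,\cdot)=8\cbra ab{}^{3}\cbra{\cdot}{a^{3}}{}\gz(a,b)$ from the proof of Lemma~\ref{thm:7.2}. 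Your version is self-contained and avoids the differential-form bookkeeping (no need to track $dz$ versus $dz^{-1}$), at the cost of proving $\hlie(w,w)=0$ separately; the paper's version makes the geometric origin of the inversion transparent and handles $c_{2}$ and $c_{3}$ in one stroke.
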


\begin{proof}
The identity $\kappa_{u,v}(z^{-1})= \kappa_{v,u}(z) $ is immediate 
from eqn.\,\ref{eq:6.1}, so Corollary \ref{thm:7.1} implies 
\begin{align}
  \alie(u + z^{-1} v, v) \, dz^{-1}= \alie(v + z u,u) \, dz. \label{eq:7.1}
  \end{align}
Now substituting this into the tautology
\begin{align}
0 &= {\hlie(u + z^{-1} \, a^2 b,a^2 b) \over p(u+ z^{-1} \, a^2 b) } -  
 \, \alie( u+ z^{-1} a^2 b,a^2 b) \nonumber \\
&= z^2 \left(\,  \hlie(u + z^{-1} \, a^2 b,a^2 b)  -  
 p(u+ z^{-1} \, a^2 b) \, \alie( u+ z^{-1} a^2 b,a^2 b)\right) \, d z^{-1} \nonumber
\end{align}
and using the homogeneity of $p$, 
\begin{align}
0 
&=  z^2 \, \hlie(u + z^{-1} \, a^2 b,a^2 b) \,\, d\,z^{-1} 
-  z^{-2} \, p(a^2 b + z \, u) \, \alie(a^2 b + z u, u) \, dz,  
\nonumber
\end{align}
and Lemma \ref{thm:7.2} gives 
\begin{align}
0 
&= z^2 \, \hlie(u + z^{-1} \, a^2 b,a^2 b) \,\, d\,z^{-1} \nonumber \\
&\qquad -  z^{-2} \, 
\left( -32 \, \cbra a b {}^3 \, \cbra u {a^3} {} \, z + O(z^2) \right)
\, \left(\textstyle{-1 \over 4} \, \gz(a,b) \, z^{-1} + O(1) \right) \, 
\left(-z^2 \, d\, z^{-1} \right) \nonumber \\
&= \left( z^2 \, \klie(z^{-1}) 
 + O(z) \right) \, 
d \, z^{-1} . \nonumber
\end{align}
Now $z^2 z^{-n} = O(z)$ only if $2-n \ge 1$, so the polynomial $\klie(z)$ 
must have degree $n \le 1$.  
\end{proof}

\noindent 
For fixed $u$, $a$, and $b$, we write the numerator in terms of 
the $\liesltc$ basis eqn.\,\ref{eq:4.5}:  
\begin{align}
\hlie(u + z a^2 b, a^2 b) = h_0(z) \, \gz(a,b)  +
h_+(z) \, \gp(a,b) +
h_-(z) \, \gm(a,b),  \label{eq:7.2}
\end{align}
where the coefficient functions $h_0(z)$, $h_\pm(z)$ depend on the 
choice of $u$, $a$, and $b$.

\begin{proposition}
\label{thm:7.4} 
\begin{align}
\cbra a b {}^3 \, h_0(z)  
&= 3\, \buadb^2\, \buabd-2\, \buat\, \buabd^2 - \buat\, \buadb\, \bubt
\cr
&\qquad + 
2\, \bab^3\, ( 4\, \buat \, \buabd -3\, \buadb^2)\, z  \nonumber \\
&\qquad -  
8\, \bab^6\, \buat \, z^2, 
\nonumber \\
\cbra a b {}^3 \, h_+(z) 
&= 
4\, \bubt\, 
\left(
 \buadb ^2 -\buat \, \buabd  
    + 2\, \bab^3\, \buat \, z
\right), 
\nonumber \\
\cbra a b {}^3 \, h_-(z) 
&= 
2\,\buat \, 
\left( 
  \buat \, \bubt -\buadb \, \buabd
  + 2\, \bab^3 \, \buadb\, z
\right).
\nonumber 
\end{align}
\end{proposition}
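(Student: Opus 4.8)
The plan is to establish the three identities by direct transvectant computation, using two reductions to keep the work finite. The first reduction is in the variable $z$: by Lemma~\ref{thm:7.3} the element $\klie(z) = \hlie(u + z\,a^2 b,\,a^2 b) + 8\cbra ab{}^3\,\buat\,\gz(a,b)\,z^2$ is affine-linear in $z$, so the $z^2$-coefficient of $\hlie(u + z\,a^2 b, a^2 b)$ is exactly $-8\cbra ab{}^3\,\buat\,\gz(a,b)$, which matches the stated formulae (the $z^2$ term appears only in $h_0$), and there is no $z^3$-term — consistent with $\hlie(a^2 b, a^2 b)=0$, since the decomposition $S^4(\irrep 3)\simeq\irrep 0\oplus\irrep 4\oplus\irrep 6\oplus\irrep 8$ contains no copy of $\irrep 2$. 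Thus only the constant term $\hlie(u, a^2 b)$ and the linear-in-$z$ term $\tfrac{\partial}{\partial z}\big|_{0}\hlie(u + z\,a^2 b, a^2 b)$ still need to be computed and expanded in the basis of eqn.~\ref{eq:4.5}.

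The second reduction is in the pair $(a,b)$. Applying $g\in\sltc$ simultaneously to $u,a,b$ multiplies the left side of eqn.~\ref{eq:7.2} by $g$ and carries $\gz(a,b),\gp(a,b),\gm(a,b)$ to $g\gz(a,b),g\gp(a,b),g\gm(a,b)$, so the coefficient functions $h_0(z),h_+(z),h_-(z)$ are $\sltc$-invariant in $(u,a,b)$; the scalars $\cbra ab{}$ and $\buat,\buadb,\buabd,\bubt$ on the right sides are likewise $\sltc$-invariant since $\cbra\cdot\cdot{}$ is an $\sltc$-invariant pairing. As both sides are polynomial and transform compatibly under the rescaling $(a,b)\mapsto(\lambda a,\mu b)$, it suffices to verify the three identities for $a=x$, $b=y$ and arbitrary $u = c_3 x^3 + c_2 x^2 y + c_1 x y^2 + c_0 y^3$, $z\in\comp$.

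To carry out this verification I would substitute $u$ into $\hlie(u,v) = \tfrac{1}{5}\cbra{\cbra{u^2}{u}{3}}{v}{2} - \tfrac{2}{35}\cbra{\cbra{u^2}{u}{2}}{v}{3}$ with $v = x^2 y$, evaluate the iterated transvectants using eqn.~\ref{eq:4.2}, expand the resulting binary quadratic form in $\gz(x,y)=-xy$, $\gp(x,y)=\tfrac{1}{2}x^2$, $\gm(x,y)=-\tfrac{1}{2}y^2$, and finally translate the monomial coordinates into the pairings via eqn.~\ref{eq:4.4}, which for $\cbra xy{}=1$ gives $\buat=-6c_0$, $\buadb=2c_1$, $\buabd=-2c_2$, $\bubt=6c_3$; this reproduces the cubic-in-$u$ part of each of $h_0,h_+,h_-$. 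The linear-in-$z$ part is obtained in the same way after polarizing $\hlie$ once in its first argument in the direction $x^2 y$, i.e. from $\tfrac{d}{dz}\big|_{0}\hlie(u + z\,x^2 y,\,x^2 y)$, which produces the quadratic-in-$u$ terms and, in particular, the absence of $z$-terms in $h_+$ and $h_-$. The only real obstacle is bookkeeping: $\cbra{u^2}{u}{3}$ and $\cbra{u^2}{u}{2}$ are bulky intermediate objects, so the computation should be staged — first form $u^2\in\irrep 6$, then the inner transvectants landing in $\irrep 3$ and $\irrep 5$, then the outer transvectants against $x^2 y$ — keeping everything in monomial bases until the last step. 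Once Lemma~\ref{thm:7.3} and the covariance reduction have trimmed the problem, no conceptual difficulty remains; the identities reduce to a finite, mechanical check.
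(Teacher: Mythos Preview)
Your approach is essentially the paper's: invoke Lemma~\ref{thm:7.3} to pin down the $z^2$ coefficient and reduce to the zeroth and first Taylor coefficients, then compute those directly. The paper does not spell out the $\sltc$-reduction to $a=x$, $b=y$, but your covariance argument is valid and makes the omitted direct calculation more tractable.

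There is, however, a slip in your description of the outcome. You write that polarizing $\hlie$ once in the direction $x^2 y$ will reveal ``the absence of $z$-terms in $h_+$ and $h_-$.'' This is false: the stated formulae for $\cbra ab{}^3 h_+(z)$ and $\cbra ab{}^3 h_-(z)$ each contain a nonzero linear term, namely $8\,\bubt\,\bab^3\,\buat\,z$ and $4\,\buat\,\bab^3\,\buadb\,z$ respectively. So the derivative $\tfrac{d}{dz}\big|_{0}\hlie(u+z\,x^2 y,\,x^2 y)$ contributes nontrivially to all three components, not just $h_0$. This does not damage your method---the computation you outline will simply produce these terms rather than zero---but you should correct the expectation before carrying it out.
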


\begin{proof}
Applying eqn.\,\ref{eq:4.4} to Lemma \ref{thm:7.3},  
\begin{align}
h_0(z) + 8 \cbra a b {}^3 \, \cbra u {a^3} {} \,  z^2  &= 
-\cbra {\klie(z)} \gz {}= O(z),{} 
\cr
h_+(z)  &= 
-2 \, \cbra {\klie(z)} \gm  {} = O(z),\nonumber \\
h_-(z)  &= 
- 2\, \cbra {\klie(z)} \gp {} = O(z).  \nonumber
\end{align}
It remains compute and simplify the zeroth and first-order Taylor coefficients, 
as in the proof of Lemma \ref{thm:7.2}.   We omit the details for the sake of brevity. 
\end{proof}

\subsection{The Example of $\Lambda^\pm_0$ in detail} 
\label{sec:triv}

We now combine the results of previous sections to compute the two solutions 
$\Lambda_0^+ = \left[ \lambda_0^+(t), \mu \right]$  and $\Lambda_0^-= \left[ \lambda_0^-(t), -\mu \right]$ of 
$P_{VI}$ arising from  the
 the trivial bundle $E = P(\irrep 3) \times \irrep 1$ with 
the product $\sltc$ action. 
The pullback bundle 
$\kappa^*_{u,v} E = \comp P^1 \times \irrep 1$ is trivial 
for any parametrized projective line $\kappa_{u,v}: \comp P^1 \to P(\irrep 3)$,  
and the pullback connection is described by:    

\begin{lemma}
\label{thm:8.1} 
$$ \kappa_{u,v}^* \nabla = d + A \, dz $$
where the $End_0(\irrep 1)$-valued zero-form $A$ is given by 
$$ A(z) = - \sqb {\alie(u + z v, v)} {\, \cdot \, } . $$
\end{lemma}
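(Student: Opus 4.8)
The plan is to read the pullback connection straight off Hitchin's formula for trivial bundles, using the value of $\kappa_{u,v}^*\alpha^{-1}$ already established in Corollary \ref{thm:7.1}. Since $E = P(\irrep 3)\times\irrep 1$ is trivial, the connection $\nabla$ of eqn.\,\ref{eq:2.4} agrees with eqn.\,\ref{eq:2.3}; in the notation of Section \ref{sec:biglog} this is the operator $\nabla = d + \omega$ on $P(\irrep 3)\times\irrep 1$, where $\omega$ is the $End(\irrep 1)$-valued meromorphic one-form $\omega(X) = -\sqb{\alpha^{-1}(X)}{\,\cdot\,}$ obtained by composing the $\liesltc$-valued one-form $\alpha^{-1}$ of Proposition \ref{thm:5.3} with the linearized action $\sqb{\cdot}{\cdot}:\liesltc\times\irrep 1\to\irrep 1$.

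First I would note that the pullback bundle $\kappa_{u,v}^*E = \comp P^1\times\irrep 1$ is again trivial, being the pullback of a trivial bundle, so a section of it is just a map $\comp P^1\to\irrep 1$ and the pullback connection has the form $\kappa_{u,v}^*\nabla = d + \kappa_{u,v}^*\omega$, with $\kappa_{u,v}^*\omega$ the pulled-back $End(\irrep 1)$-valued one-form. Since $\kappa_{u,v}^*\omega = -\sqb{\kappa_{u,v}^*\alpha^{-1}}{\,\cdot\,}$ and Corollary \ref{thm:7.1} gives $\kappa_{u,v}^*\alpha^{-1} = \alie(u+zv,v)\,dz$, this is exactly
\begin{equation*}
\kappa_{u,v}^*\nabla = d + A\,dz, \qquad A(z) = -\sqb{\alie(u+zv,v)}{\,\cdot\,}.
\end{equation*}

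It then remains only to observe that $A(z)$ takes values in the traceless endomorphisms $End_0(\irrep 1)$, i.e. that $\sqb{\xi}{\,\cdot\,}$ is traceless for every $\xi\in\liesltc$. This is automatic: $\sltc$ is connected and admits no nontrivial one-dimensional representation, so $\det$ of the action on $\irrep 1$ is constant and its differential, the trace of the linearized action, vanishes; alternatively one simply checks on the explicit basis eqn.\,\ref{eq:4.5} acting on $\{a,b\}$ via eqn.\,\ref{eq:4.8} that $\sqb{\gz}{\,\cdot\,}$, $\sqb{\gp}{\,\cdot\,}$, $\sqb{\gm}{\,\cdot\,}$ are each traceless.

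There is no genuine obstacle in this lemma; it is a bookkeeping specialization of the Section \ref{sec:biglog} formalism to $V = \irrep 1$ combined with Corollary \ref{thm:7.1}. The only points requiring care are the sign in eqn.\,\ref{eq:2.3} and the $End_0$ claim just noted. The real computational content — expanding $A(z)$ in Laurent series at the poles $z\in\{0,1,t,\infty\}$ and extracting the residues $A_0,A_1,A_t,A_\infty$ that feed Proposition \ref{thm:3.1} — is postponed to the following subsections, where it is carried out using the explicit formulas of Proposition \ref{thm:7.4}.
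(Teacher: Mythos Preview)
Your proof is correct and follows essentially the same route as the paper: pull back Hitchin's connection formula eqn.\,\ref{eq:2.3} and substitute the expression for $\kappa_{u,v}^*\alpha^{-1}$ from Corollary \ref{thm:7.1}. You add a justification for the tracelessness of $A(z)$ which the paper omits, but otherwise the arguments are identical.
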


\begin{proof}
The pullback of the connection $\nabla$ defined by eqn.\,\ref{eq:2.3}, 
acting on a section $h$ of $P^1 \times \irrep 1$,  is  
$$ (\kappa^*_{u,v} \nabla) h = d\, h - \sqb {\kappa^*_{u,v} \alpha^{-1}} h , $$
and by Corollary \ref{thm:7.1} 
$$ (\kappa^*_{u,v} \nabla) h= d \, h - \sqb {\alie(u+zv,v)} {h \,} \,dz 
 . \qed$$
\renewcommand{\qedsymbol}{}
\end{proof}
\noindent We now apply this result 
to the family of projective 
lines described in Lemma \ref{thm:6.1}, and then use the results of Section 7 to 
calculate the  data that enters into the   
Jimbo-Miwa formula Proposition \ref{thm:3.1} for $P_{VI}$ solutions 
(although not strictly in this order).  
 
We may then suppose that $v = a^2 b$ with $\cbra a b {} \ne 0$.  
From Lemma \ref{thm:7.2}  (see also eqn.\,\ref{eq:7.1}) 
we compute  the residue of $A \, dz$ at $z = \infty$ 
to be the endomorphism 
$$ 
A_\infty = \textstyle{1 \over 4} \sqb {\gz(a,b)} {\, \cdot \,} : 
\irrep 1 \to \irrep 1 , $$
and from eqn.\,\ref{eq:4.8} we see that $A_\infty$ has eigenvalues 
$\{ \tfrac{1}{4}, -\tfrac{1}{4} \}$ with corresponding 
eigenvectors $\{ r^+ = a, r^- = b \}$.    
Next we need to 
solve for  $z = \lambda^\pm \in \comp$ such that 
$r^\pm$ is an eigenvector of $A(\lambda^\pm)$.  

\begin{lemma} 
\label{thm:8.2} 
A vector $r \in \irrep 1$ is an eigenvector of 
the linear map   
$B: \irrep 1 \to \irrep 1$ if and only if  $\cbra r {B r} {} = 0$.  
\end{lemma}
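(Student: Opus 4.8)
The plan is to exploit that, as observed just after eqn.\,\ref{eq:4.3}, the form $\cbra \cdot \cdot {}$ restricted to $\irrep 1$ is a \emph{nondegenerate antisymmetric} (symplectic) bilinear form on the two-dimensional space $\irrep 1$; the nondegeneracy is witnessed concretely by eqn.\,\ref{eq:4.4} with $p=1$. Everything then reduces to elementary linear algebra on a two-dimensional symplectic space, and we may assume $r \ne 0$, since the statement ``$r$ is an eigenvector'' is understood to require $r \ne 0$ (the case $r = 0$ being vacuous).

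First I would dispatch the forward implication: if $B r = \mu r$ for some $\mu \in \comp$, then $\cbra r {B r} {} = \mu \, \cbra r r {} = 0$ by antisymmetry of $\cbra \cdot \cdot {}$. For the converse, suppose $r \ne 0$ and $\cbra r {B r} {} = 0$. Because $\cbra \cdot \cdot {}$ is nondegenerate and $r \ne 0$, the linear functional $\cbra r \cdot {} : \irrep 1 \to \comp$ is nonzero, so its kernel is one-dimensional; since antisymmetry gives $\cbra r r {} = 0$, the vector $r$ itself lies in that kernel, and hence the kernel is exactly $\comp\, r$. The hypothesis $\cbra r {B r} {} = 0$ says precisely that $B r$ lies in this kernel, so $B r \in \comp\, r$, i.e.\ $B r = \mu r$ for some $\mu \in \comp$, which is the desired conclusion.

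There is no real obstacle here; the only point requiring a moment's care is the bookkeeping that forces the kernel of $\cbra r \cdot {}$ to coincide with $\comp\, r$ (rather than merely to contain it), which is where two-dimensionality and nondegeneracy are both used. If one prefers a coordinate proof, one can instead pick the symplectic basis supplied by eqn.\,\ref{eq:4.4}, write $B$ as a $2\times 2$ matrix, and check that $\cbra r {Br}{}$ vanishes exactly when $r$ lies on one of the two eigenlines (the vanishing locus of a quadratic form that factors over $\comp$); but the invariant argument above is cleaner and avoids any computation.
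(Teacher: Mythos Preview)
Your proof is correct and follows exactly the route the paper indicates: the paper's own proof is the single sentence ``This holds for any two-dimensional vector space with a nondegenerate antisymmetric bilinear form,'' and your argument simply unpacks that claim. No changes needed.
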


\begin{proof} This holds for any two-dimensional vector space with 
a nondegenerate antisymmetric bilinear form. 
\end{proof} 

\noindent So we need to solve for $z$ the equation 
\begin{align*}
 0 = \cbra {r^\pm} {\sqb{A(z)} {r^\pm}} {} , 
 \end{align*}
which by Lemma \ref{thm:8.1} and the definition of $\alie$  
(Lemma \ref{thm:5.2}) is equivalent to
\begin{align}
0 = \cbra {r^\pm} {\sqb{\hlie(u + z a^2 b, a^2 b)} {r^\pm}} {} . 
\nonumber
 \end{align}
Using the basis expansion eqn.\,\ref{eq:7.2} and the fact that $r^\pm$ is an eigenvector of 
$\sqb{ \gz(a,b)} {\, \cdot \,} $, this is equivalent to
\begin{align} 0 =  h_-(z) \, f_0^\pm - h_+(z) \, g_0^\pm ,  
\label{eq:8.2}
 \end{align}
where  
\begin{align}
f_0^\pm &:=  \cbra {r^\pm} {\sqb {\gm(a,b)} {r^\pm} } {}  , \cr
g_0^\pm &:= -\cbra {r^\pm} {\sqb {\gp(a,b)} {r^\pm} } {} .
\nonumber
\end{align}

Now specialize to the paramerized projective line $\kappa_{(w)}$ of 
Lemma \ref{thm:6.1}.  Substituting  
$u = \tilde u (w)$, $a = x$, $b = y$ into the 
formulae for $h_\pm(z)$ from Proposition \ref{thm:7.4} yields 
\begin{align}
h_+(z) &= -{96 \,(w+1)^3\over (w+3)^7} 
\, 8 \, \left((w-3)^2 + 3\, (w-1)\, (w+3)\, z)\right)  \cr
h_-(z) &= {96\,  (w+1)^3 \over (w+3)^7} (w-1)\,\left((w-3)^2\, (w+1) 
- (w+3)\, (3 + w^2)\, z)\right).  
\nonumber
\end{align}%
Then eqn.\,\ref{eq:8.2} is solved by  
 $z = \lambda_0^\pm(w)$, where (compare with eqn.\,\ref{eq:1.3})
\begin{align}
\lambda_0^\pm(w)   
& := \left( {{{{\left( w-3 \right) }^2}}\over 
{\left( w-1 \right) \,\left(w+3 \right) }}  \right)
{   (-1 + w^2) \, f_0^\pm + 8 \, g_0^\pm \over 
  (3 + w^2) \, f_0^\pm - 24\, g_0^\pm}.  
\label{eq:8.3}
\end{align}
Now $r^+ = a = x$ and $r^- = b = y$, so computing   
\begin{align}
 f_0^+ &= \cbra x {\sqb{\gm(x,y)} x} {} = \cbra x y {} = 1, \cr 
g_0^+ &= -\cbra x {\sqb{\gp(x,y)} x} {} = 0,  
\nonumber
\end{align}
and  substituting into eqn.\,\ref{eq:8.3} we into we obtain eqn.\ref{eq:1.5}, and similarly 
from 
\begin{align}
f_0^- &= \cbra y {\sqb{\gm(x,y)} y} {} = 0,  \cr
g_0^- &= -\cbra y {\sqb{\gp(x,y)} y} {} = -\cbra y x {} = 1.  
\nonumber
\end{align}
we obtain eqn.\,\ref{eq:1.6}.  From Proposition \ref{thm:3.1} we conclude that 
$\Lambda_0^\pm = \left[ \lambda_0^\pm(t) ; \pm \mu \right]$  solves $P_{VI}$. 
Here the parameter $\theta = \pm \mu \in \comp^4$ is computed from the residues.   

\end{document}